\newtheorem{thm}{Theorem}[section]
\newtheorem{lem}[thm]{Lemma}
\theoremstyle{definition}
\theoremstyle{remark}
\newtheorem{rem}{Remark}[section]
\newcommand{\R}{\mathbb{R}}
\def\serieslogo@{}
\def\@setcopyright{}
\def\Real{{\mathbb R}}
\def\Cnumbers{{\mathbb C}}
\def\supp{{\rm supp}}
\newcommand{\eq}[1]{{(\ref{#1})}}
\newcommand{\Vector}[1]{{\left(\begin{matrix} #1 \end{matrix}\right)}}
\newcommand{\bv}[1]{{\mbox{\boldmath $ #1$}}}
\def\x{{\bv x}}
\def\y{{\bv y}}
\def\xsm{{\bv{\scriptstyle x}}}
\def\psm{{\bv{\scriptstyle p}}}
\def\p{{\bv p}}
\def\e{{\bv e}}
\def\Ai{{\rm Ai}}
\def\Bi{{\rm Bi}}
\def\Scal{{\mathcal S}}
\def\Fcal{{\mathcal F}}
 \newcommand{\ba}{\begin{array}}
 \newcommand{\ea}{\end{array}}
 \newcommand{\be}{\begin{equation}}
 \newcommand{\ee}{\end{equation}}
 \def\CC{\rm \hbox{C\kern-.57em\raise.47ex
 \hbox{$\scriptscriptstyle |$}\kern+0.5 em}}
\begin{document}
\title[]{Error estimates for Gaussian beams at a fold caustic}
\author{Olivier Lafitte}
\address{LAGA, UMR7539 
Universit\'e Sorbonne Paris Nord,
99, Avenue J. B. Clement, 93430
Villetaneuse, France}
\address{IRL CNRS-CRM, Universit\'e de Montr\'eal, Chemin de la tour, Montr\'eal, Canada.}
\email{lafitte@math.univ-paris13.fr}
\author{Olof Runborg$^*$}
\address{Department of Mathematics, KTH, 10044 Stockholm, Sweden}
\email{olofr@kth.se}
\thanks{$^*$ Corresponding author.}
\date{\today}
\maketitle

\begin{abstract}
In this work we show an error estimate for a first order
Gaussian beam 
at a fold caustic, approximating time-harmonic
waves governed by the Helmholtz equation.
For the caustic that we study the
exact solution can be constructed using Airy functions and
there are explicit formulae for the Gaussian beam parameters.
Via precise comparisons we show that the pointwise error on the
caustic is of the order $O(k^{-5/6})$ where $k$ is the
wave number in Helmholtz.
\end{abstract}

\section{Introduction}

Gaussian beam superpositions is a high frequency asymptotic approximation 
for solutions of wave equations
\cite{Ralston:82}. It is used in 
numerical methods to simulate waves in the
high frequency regime.
Unlike standard geometrical optics, the Gaussian beam approximation does
not break down at caustics, which is one of its main advantages.

In this paper we consider error estimates for the approximation
in terms of the wave number $k>0$.
Error estimates for Gaussian beams are known in a number of settings.
See for instance \cite{Liu1,Liu2} and the references therein.
The main result is that, in $L^2$ and 
Sobolev norms, the relative error of first 
order beams decays as $O(k^{-1/2})$, independently
of dimension and regardless of the presence of caustics.
This has been shown for general strictly hyperbolic partial differential equations
and the Schr\"odinger equation \cite{Liu1,Liu2}
as well as the Helmholtz equation \cite{Liu3}.
The better
rate $O(k^{-1})$ is typically observed in numerical
computations and has been shown in $L^2$ for the Schr\"odinger
equation \cite{Zheng:14}, and also in $L^\infty$ for 
the Schr\"odinger and the acoustic wave equation
on sets strictly
away from caustics
\cite{Liu2}. Similar estimates have also been derived
for higher order beams.
For $p$-th order beams the rates are $O(k^{-p/2})$ and
$O(k^{-\lceil p/2\rceil})$ respectively.
There are, however, no precise, pointwise, error estimates for the solution
at a caustic. In particular, for first order beams it has not been shown
that this error vanishes as $k\to\infty$, although there is
ample numerical evidence to this effect; see for instance \cite{LiuRalston:20}.

The purpose of this
paper is to show such an error estimate for a
typical fold caustic in two dimensions.
More precisely, we consider 
the Helmholtz equation
\be\label{eq:HHmain}
  \Delta u + k^2(1-x)u = 0.
\ee
We assume there is an incident wave $u_{\rm inc}$ from $x=-\infty$
making an angle $\Theta\in(0,\pi/2)$ with the $x$-axis.
Moreover, at $x=0$
it has the amplitude envelope $A(y)$, so that
$$
   u_{\rm inc}(0,y)= A(y)e^{ik\sin\Theta}.
$$
This wave will
generate a fold caustic at the line $x=x_c$ where
$$
  x_c=\cos^2(\Theta).
$$ 
Figure~\ref{fig:foldcaustic} shows
a representative solution.
In Section~\ref{sec:Exactsols}
we make this physical situation
precise and derive an exact solution using
Airy functions on $\Real^2$.
We subsequently study the solution
in the region $0\leq x\leq x_c$
and compare it at $x=x_c$
to an approximation
using Gaussian beams, denoted by $u_{GB}$.
(Note that $0< 1-x\leq 1$ in this region; 
we do not make comparisons elsewhere, as the
equation then no longer models the physical situation.)
The main result is the following theorem.
\newpage

\begin{figure}[t]
 \begin{center}
 \includegraphics[width=.9\textwidth]{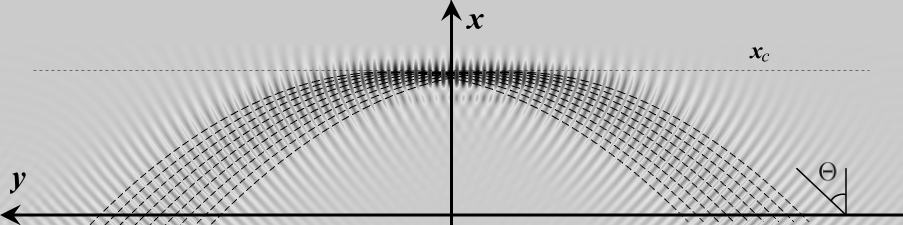}
 \end{center}
 \caption{The fold caustic: Example of solution with ray tracing picture
 for $0\leq x \leq 1$.
 }
 \label{fig:foldcaustic}
\end{figure}
\begin{thm}\label{mainresult}
Suppose $A$
is a Schwartz class function,
$A\in \Scal(\Real)$,
and that $0<\bar\Theta_0 \leq \Theta \leq \bar\Theta_1 <\pi/2$.
 Let $u$ be the exact solution
of \eqref{eq:HHmain} as
defined in Section~\ref{sec:Exactsols}
and 
 $u_{GB}$ a first order Gaussian beam approximation detailed in
 Section~\ref{sec:GBapp}.
Then there is a constant $C$ independent of $k$ such that
$$
  ||u_{GB}(x_c,\cdot)-u(x_c,\cdot)||_{L^\infty}
  \leq
   C 
     k^{-5/6}.
$$
\end{thm}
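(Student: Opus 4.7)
The plan is to represent both $u(x_c,\cdot)$ and $u_{GB}(x_c,\cdot)$ as oscillatory integrals over a common frequency variable $\eta$, and then estimate their pointwise difference by uniform asymptotic analysis near the degenerate critical point produced by the caustic. First I would use the translation invariance of \eqref{eq:HHmain} in $y$ to Fourier decompose the incident data $A$. Since the Helmholtz operator becomes an Airy equation in $x$ for each transverse frequency, the exact solution should take the form
$$
  u(x_c,y) = \int_{\Real} \hat A(\eta)\,\alpha(\eta)\,\mathrm{Ai}\bigl(k^{2/3}\sigma(\eta)\bigr)\,e^{ik y\eta}\,d\eta,
$$
with an explicit amplitude $\alpha$ and a function $\sigma(\eta)$ that vanishes precisely at the direction $\eta_*=\sin\Theta$ corresponding to the stationary ray that is tangent to the caustic. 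Since $A\in\Scal$, the weight $\hat A$ provides rapid decay in $\eta$.

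Next I would write $u_{GB}(x_c,y)$ using the Gaussian beam formulae from Section~\ref{sec:GBapp}. Each beam is launched from a point on $x=0$ with direction $(\cos\Theta,\sin\Theta)$, propagates along its ray and carries a quadratic Taylor expansion of the phase; summing over launch points and applying Plancherel should recast $u_{GB}(x_c,y)$ as an integral in the same frequency variable $\eta$, producing an expression of the form
$$
  u_{GB}(x_c,y) = \int_{\Real} \hat A(\eta)\,\tilde\alpha(\eta)\,e^{ik \Phi_{GB}(\eta,y)}\,d\eta,
$$
in which $\Phi_{GB}$ agrees with the true phase to quadratic order in $\eta-\eta_*$ but truncates the cubic term responsible for the caustic transition.

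Both integrals can then be put into a common Chester--Friedman--Ursell form by substituting a new variable $t=t(\eta)$ that maps the exact phase onto the canonical cubic $t^3/3+\sigma t$. After the change of variables, the exact integral becomes precisely the scaled Airy integral times an analytic symbol, while the Gaussian beam integral becomes the same Airy integral with a symbol that differs from the exact one by $O(\eta-\eta_*)^3$, coming from the omitted cubic and higher terms in the phase, plus amplitude mismatches of comparable order. At $x=x_c$ we have $\sigma(\eta_*)=0$, so the relevant region of integration has width $k^{-1/3}$ (the Airy scale), on which each $\eta$-derivative costs a factor $k^{-1/3}$. Evaluating the error symbol at scale $k^{-1/3}$, the cubic discrepancy contributes a factor $k^{-1}$, which combined with the $k^{1/6}$ amplification from $\mathrm{Ai}(0)$ at the caustic and the $k^{-1/3}$ measure of the stationary region gives a pointwise bound of $O(k^{-5/6})$.

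The main obstacle is matching the two representations finely enough to cancel the leading Airy contribution exactly: the $k^{-1/2}$ leftover from a crude comparison is one order too weak, and the improvement to $k^{-5/6}$ requires that the Gaussian beam and the exact solution agree not only in the amplitude and phase on the ray through $\eta_*$ but also in the Hessian of the phase transversally to it, so that only the cubic and higher discrepancies remain. Additional care will be needed to make the bound uniform in $y$, which I expect to handle by splitting the $\eta$-integral into a compact neighborhood of $\eta_*$ (treated with the Airy uniform expansion above) and a complementary region where non-stationary phase together with the Schwartz decay of $\hat A$ yields arbitrary inverse powers of $k$; the hypothesis $\bar\Theta_0\leq\Theta\leq\bar\Theta_1$ ensures that $\eta_*$ stays inside a fixed compact set so that the constants in the uniform expansion are controlled.
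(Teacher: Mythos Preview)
Your broad strategy---Fourier transform in $y$, compare the two symbols near the critical frequency $\eta_*=\sin\Theta$, and kill the complement with the Schwartz decay of $\hat A$---is exactly the paper's framework. But two concrete steps in your outline would not go through as written. First, after Fourier transforming the Gaussian beam superposition in $y$ you do \emph{not} obtain an expression of the form $\tilde\alpha(\eta)e^{ik\Phi_{GB}(\eta,y)}$. What you get is $\hat A(\eta)\,\hat v_{GB}(\eta)$ where $\hat v_{GB}(\eta)$ is itself an oscillatory integral over the beam parameter $\theta=s-\xi_0$,
\[
\hat v_{GB}(\eta)\ \propto\ k^{1/3}\int \frac{e^{ik\phi_g(0,\theta,\eta)}}{\sqrt{q(\xi_0+\theta)}}\,d\theta,
\qquad
\phi_g=-\tfrac{1}{3}\theta^3-2\eta_0\eta\,\theta+\tfrac12 m_{11}(\xi_0+\theta)\,\theta^4.
\]
At the caustic the $\theta$-stationary point degenerates, so you cannot collapse this to a single WKB phase; there is no Chester--Friedman--Ursell change of variables to perform in $\eta$. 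The comparison the paper actually makes is between this $\theta$-integral and $\Ai(\rho)$ with $\rho=2k^{2/3}\eta_0\eta$, pointwise in $\eta$.

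Second, and more importantly, your description of the discrepancy is inverted: the Gaussian beam phase already contains the full cubic $-\theta^3/3$; the correction is \emph{quartic}, $\tfrac12 m_{11}\theta^4$, together with the non-constant amplitude $r(\theta)=(2\pi)^{-1}\sqrt{q(\xi_0)/q(\xi_0+\theta)}$. A naive expansion of $r(\theta)e^{ik m_{11}\theta^4/2}$ at the Airy scale $\theta\sim k^{-1/3}$ produces two $O(k^{-1/3})$ contributions to $\hat v_{GB}-\Ai(\rho)$, one from $r'(0)\theta$ and one from $r(0)\tfrac{ik}{2}m_{11}(\xi_0)\theta^4$; with the $k^{1/6}$ prefactor this would only give $O(k^{-1/6})$, not $k^{-5/6}$. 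The improvement comes from a specific cancellation: using $\Ai^{(4)}(\rho)=\rho^2\Ai(\rho)+2\Ai'(\rho)$ one finds the $\Ai'$ coefficients combine to $r'(0)+r(0)m_{11}(\xi_0)=0$, which is exactly the transport-equation relation $r'/r=-\tfrac12 q'/q=-m_{11}$ evaluated at the turning point (where $m_{22}=0$). This is the mechanism you need to name; ``matching the Hessian'' is automatic in the Gaussian beam construction and does not by itself buy the extra $k^{-2/3}$. Your final power count $k^{-1}\cdot k^{1/6}\cdot k^{-1/3}$ also does not equal $k^{-5/6}$, which is a symptom of conflating the $\eta$ and $\theta$ scales.
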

This result hence confirms that first order Gaussian beams 
do converge pointwise at the caustic. Moreover, since the
solution itself grows as $O(k^{1/6})$ at this caustic \cite{Ludwig}, the
relative error is $O(k^{-1})$, the same  as away from the caustic.
We conjecture that this will be the case also for more
general caustics.

The paper is organized as follows. In Section 2 notations are established
and some preliminary results are discussed. In Section 3 the exact solution
is defined and a formula for it is derived.
In Section 4, the corresponding Gaussian beam approximation is introduced.
Sections 5, 6 and 7 contain estimates of the Gaussian beam parameters, the phase,
various oscillatory integrals, as well as the exact solution and the Gaussian beam
approximation.
In Section 8 the proof of the main result in Theorem~\ref{mainresult} is
carried out. Finally, in Section 9 some properties of Airy functions are presented.

\section{Preliminaries}

In the analysis we use a $k$-scaled Fourier transform
and indicate it with a hat mark on the function,
\be\label{Fdef}
   \hat{f}(\eta) :=\Fcal_k(f)(\eta)
:=
   \sqrt{k}\Fcal(f)(k\eta) =
   \sqrt{\frac{k}{2\pi}}\int f(x) e^{-ikx\eta}dx.
\ee
The corresponding scaled inversion formula reads
$$
   f(x) = \sqrt{\frac{k}{2\pi}}\int \hat{f}(\eta) e^{ikx\eta}d\eta
   =:\Fcal^{-1}_k(\hat{f})(x)
   =\sqrt{k}\Fcal^{-1}(\hat{f})(kx).
$$
We also have
$$
\widehat{f_x}(\eta) = ik\eta \hat{f}(\eta),\qquad
   ||f||_{L^\infty}\leq \sqrt{\frac{k}{2\pi}}||\hat{f}||_{L^1},
   \qquad 
   \Fcal_k(f\ast g)(\eta)
   = \sqrt{\frac{2\pi}{k}}\hat{f}(\eta) \hat{g}(\eta)
$$

We will frequently make use of a smooth, even, cut-off function
which we denote
$\psi\in C_c^\infty(\Real)$. It is defined as
$$
 \psi(x) = \begin{cases}
 1, & |x|\leq 1, \\
 0, & |x|\geq  2, \\
 \in(0,1), &1<|x|<  2,
 \end{cases}
 \qquad \psi(-x)=\psi(x).
$$
This is used to divide integrals into subdomains and to regularize
the Fourier transform of functions in ${\mathcal S}'$, the space
of tempered distributions. For example, if 
$f$ is in $L^{\infty}(\Real)$,
but not in $L^1(\Real)$,
the definition \eq{Fdef} must be interpreted in distributional sense.
We then let $\psi_t:=\psi(x/t)$
and consider instead the 
Fourier transform of the compactly supported function $f\psi_t$,
which is well-defined by \eq{Fdef} for all $t>0$. The following Lemma shows that
the limit as $t\to\infty$ gives us the Fourier
transform in ${\mathcal S}'$.
\newpage

\begin{lem}\label{lem:Fregularize}
Let $f\in L^{\infty}$ and set $\psi_t=\psi((b_0x+b_1)/t)$ for any fixed
real numbers
$b_0\neq 0$ and $b_1$.
Then, with $\Fcal_k$ as defined above,
$$
\lim_{t\to \infty}\Fcal_k(f\psi_t) = \Fcal_k(f),
$$
in ${\mathcal S}'$.
Moreover, if 
$g\in{\mathcal S}$ then, again in ${\mathcal S}'$,
$$
\lim_{t\to \infty}\Fcal_k((f\ast g)\psi_t) =
\lim_{t\to \infty}\sqrt{\frac{2\pi}{k}}\Fcal_k(f\psi_t)\Fcal_k(g)=
\sqrt{\frac{2\pi}{k}}\Fcal_k(f)\Fcal_k(g).
$$
\end{lem}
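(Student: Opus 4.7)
My plan is to verify both conclusions by unfolding the $\Scal'$-duality pairing against an arbitrary Schwartz function $\phi$ and applying dominated convergence. The structural observation is that the regularization $\psi_t$ slides harmlessly across the pairing because, for each fixed $x$, $(b_0x+b_1)/t\to 0$ as $t\to\infty$ and $\psi\equiv 1$ in a neighborhood of the origin, so $\psi_t(x)\to 1$ pointwise while remaining uniformly bounded by $1$.

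For the first identity, I would simply write
$$
\langle \Fcal_k(f\psi_t),\phi\rangle = \int f(x)\psi_t(x)\Fcal_k(\phi)(x)\,dx
$$
for arbitrary $\phi\in\Scal$, and note that $|\psi_t|\leq 1$ together with $f\in L^\infty$ yields the integrable dominant $\|f\|_{L^\infty}|\Fcal_k(\phi)|$, whose integrability comes from $\Fcal_k(\phi)\in\Scal\subset L^1$. Dominated convergence then delivers the limit $\langle \Fcal_k(f),\phi\rangle$, which is the first assertion.

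For the convolution statement, the two outer equalities follow quickly from the first part. Since $g\in\Scal\subset L^1$, Young's inequality gives $f*g\in L^\infty$, so applying the first part to $f*g$ produces $\lim_t\Fcal_k((f*g)\psi_t)=\Fcal_k(f*g)$. On the other side, $\Fcal_k(g)\in\Scal$ is a multiplier on $\Scal'$, so for any $\phi\in\Scal$ we have $\Fcal_k(g)\phi\in\Scal$; testing the first part against $\Fcal_k(g)\phi$ yields $\Fcal_k(f\psi_t)\Fcal_k(g)\to\Fcal_k(f)\Fcal_k(g)$ in $\Scal'$. What remains is the convolution theorem $\Fcal_k(f*g)=\sqrt{2\pi/k}\,\Fcal_k(f)\Fcal_k(g)$ in the $L^\infty\times\Scal$ setting, which is the only point requiring a dedicated argument.

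To establish that identity I would bridge through the auxiliary sequence $(f\psi_t)*g$. Because $f\psi_t$ is compactly supported and bounded, it lies in $L^1$, so the classical convolution formula recalled in the preliminaries applies and gives $\Fcal_k((f\psi_t)*g)=\sqrt{2\pi/k}\,\Fcal_k(f\psi_t)\Fcal_k(g)$. Meanwhile $(f\psi_t)*g\to f*g$ pointwise, by dominated convergence inside the convolution with majorant $2\|f\|_{L^\infty}|g(x-\cdot)|$, and the sequence is uniformly bounded by $\|f\|_{L^\infty}\|g\|_{L^1}$; pairing against $\Fcal_k(\phi)\in L^1$ and invoking dominated convergence once more yields $\Fcal_k((f\psi_t)*g)\to\Fcal_k(f*g)$ in $\Scal'$. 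Passing to the limit in the classical identity then produces the sought equality. I do not foresee any genuine obstacle: the whole argument reduces to a short chain of dominated convergence applications, the only care needed being to keep track of the factor $\sqrt{2\pi/k}$ and to verify that the compactly supported truncation really does put us in the $L^1$ regime where the classical convolution theorem is directly applicable.
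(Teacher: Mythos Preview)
Your proof is correct and follows essentially the same approach as the paper: both arguments unfold the $\Scal'$-pairing, transfer the Fourier transform to the test function, and invoke dominated convergence with the majorant $\|f\|_{L^\infty}|\Fcal_k(\phi)|$, then handle the convolution statement by applying the first part to $f*g\in L^\infty$ and by testing against $\Fcal_k(g)\phi\in\Scal$. The only difference is that the paper simply cites the distributional convolution identity $\Fcal_k(f*g)=\sqrt{2\pi/k}\,\Fcal_k(f)\Fcal_k(g)$ as known, whereas you supply a self-contained derivation via the truncations $(f\psi_t)*g\in L^1$; this extra step is sound and arguably makes the argument more complete.
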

The short proof is found in the Appendix.
In particular,
if $\hat{f}$ is defined pointwise, the Lemma shows that
\be\label{Fdefdist}
   \hat{f}(\eta) =\lim_{t\to\infty}
   \sqrt{\frac{k}{2\pi}}\int \psi(x/t) f(x) e^{-ikx\eta}dx.
\ee

We also introduce some notation that will prove useful later on
in the paper. We let 
\be\label{etaxidef}
   \xi_0 = \cos\Theta, \qquad
   \eta_0 = \sin\Theta,
\ee
so that $(\xi_0,\eta_0)^T$ is the unit vector pointing in the
propagation direction of the incident wave. 
Following Theorem~\ref{mainresult}
we will assume, throughout the paper, that
$0<\bar\Theta_0 \leq \Theta \leq \bar\Theta_1 <\pi/2$.
This translates to bounds
on $\xi_0$ and $\eta_0$ of the form 
\begin{equation}\label{xi0deltabound}
0<\bar\xi_0\leq \xi_0\leq \bar\xi_1<1, \qquad
0<\bar\eta_0\leq \eta_0\leq \bar\eta_1<1.
\end{equation}
for some $\bar\xi_j$ and $\bar\eta_j$.
Moreover, we let 
\be\label{deltadef}
  \delta = x_c-x = \xi_0^2-x,
\ee
be the distance to the caustic. 
Finally, we introduce 
the polynomial $q$, which is
related to the geometrical spreading of the rays,
\be\label{qdef}
   q(s) = 1+2is-s^2\beta, \qquad \beta = 1+2i\xi_0.
\ee
It will be used frequently in the analysis.


\section{Expression of the exact Helmholtz equation solution}
\label{sec:Exactsols}


In this section we define 
an exact solution to the Helmholtz
equation for the physical setup described in the introduction. 
Using a property
of the Airy function we deduce a
decomposition of the solution into
forward and backward going waves.

We consider a solution $u$ to \eq{eq:HHmain}
which is a tempered distribution on $\Real^2$, i.e.
$u\in {\mathcal S}'(\R^2)$.
The solution then has a $k$-scaled Fourier transform in $y$
which we denote $\hat{u}(x,\eta)$. 
Upon Fourier transforming also 
\eq{eq:HHmain} in $y$, we obtain an ODE for $\hat{u}(x,\eta)$,
\begin{equation}\label{uhatode2}
  \hat{u}_{xx} +k^2(1-x-\eta^2)\hat{u}= 0.
\end{equation}
The only tempered distribution solution to this ODE is given by
$$
  \hat{u}(x,\eta) = a(\eta)\Ai(k^{2/3}(x+\eta^2-1)),
$$
where $\Ai$ is the Airy function of the first kind, and
$a(\eta)$ is a function to be determined.
This solution is thus a $C^\infty$ bounded solution.
The Airy function in this expression contains waves going
both forward and backward. 
In the sequel, we will choose the function $a(\eta)$
as $k^{1/6}P(k,\eta)$ defined in \eq{vhatdef2}.
To arrive at this choice, we first note
that 
when 
$$
\alpha = \exp(i\pi/3),
$$ 
it holds for all $z$ that
\cite[Eq. 9.2.14]{NIST:DLMF},
\begin{equation}\label{airydecomp1}
\Ai(z) = \alpha \Ai\left(-\alpha z\right)
+\bar\alpha\Ai\left(-\bar\alpha z\right).
\end{equation}
This follows since $\Ai(-\alpha z)$ and
$\Ai(-\bar\alpha z)$ solve the same ODE
as $\Ai(z)$ given that $\alpha^3=-1$.
We then introduce the scaled variables
$$
\zeta(x)=k^{\frac23}(x+\eta^2-1),\qquad
\zeta_+(x)=-\alpha\zeta, \qquad
\zeta_-(x)=-\bar\alpha \zeta,
$$ 
such that
$$
\hat{u}(x,\eta) = a(\eta)\Ai(\zeta(x))
=a(\eta)\alpha\Ai(\zeta_+(x))+a(\eta)\bar\alpha\Ai(\zeta_-(x)).
$$
To further understand this decomposition, we note 
that the asymptotics of the Airy function (in the angular sector 
$|$arg$(z)|<2\pi/3$) is
$$
\Ai(z)\simeq \frac1{2\sqrt{\pi}}z^{-\frac14}\exp\left(-\frac23 z^{\frac32}\right).
$$
Therefore, upon defining the phase $\phi(x)=\frac23(1-\eta^2-x)^{2/3}$
we have, when $x<1-\eta^2$,
$$
 \Ai(\zeta_+(x))
\simeq Ck^{-1/6}\frac{e^{-ik\phi(x)}}
{(1-\eta^2-x)^{\frac14}},\qquad
 \Ai(\zeta_-(x))
\simeq Ck^{-1/6}\frac{e^{+ik\phi(x)}}
{(1-\eta^2-x)^{\frac14}}.
$$
Thus, the phases of the two expressions, and their gradients,  
have
opposite signs, 
meaning that the
two terms in \eq{airydecomp1}
represent a decomposition into forward and backward going waves
in the region $x<1-\eta^2$.
More precisely, the solution is fully known when $a(\eta)$ is given, and one assumes that this solution of 
$\Delta u +k^2(1-x)u=0$ for all $x$ (including $x<0$ where the velocity grows) is the sum of an incoming and an outgoing wave of the form 
$a(\eta) \alpha \Ai(\zeta_+(x))$ and $a(\eta)\bar\alpha\Ai\left(\zeta_-(x)\right)$, 
respectively.
This leads us to define 
\begin{align*}
 T_0(x,\eta) = \frac{\Ai\left(\zeta(x)\right)}{
  \Ai\left(\zeta(0)\right)},\qquad
 T_+(x,\eta) = \frac{\Ai\left(\zeta_+(x)\right)}{
  \Ai\left(\zeta_+(0)\right)},\qquad
 T_-(x,\eta) = \frac{\Ai\left(\zeta_-(x)\right)}{
  \Ai\left(\zeta_-(0)\right)}.
\end{align*}
which are three particular solutions of \eqref{uhatode2},
since each term in
\eqref{airydecomp1} solve the ODE.
These solutions are normalized such that they equal one for $x=0$.
The solutions $T_+$ and $T_-$ represent forward and backward
going waves.
Among the three solutions,
only $T_0$ is bounded,
since $\Ai(z)$ is bounded, while $T_\pm$ are not
even in ${\mathcal S}'$,
since $\Ai(\alpha z)$ includes
also the unbounded second kind Airy function $\Bi(z)$, cf. \eq
{eq:ABident}.

We are looking for the solution
$$
  \hat{u}(x,\eta) = T_0(x,\eta) \hat{u}(0,\eta).
$$
We do not know $\hat{u}(0,\eta)$, just that the incoming
part of $\hat{u}(0,\eta)$ represents the incident plane
wave.
We therefore write it as a sum
of an incoming and an outgoing part
$$
  \hat{u}(0,\eta)=\hat{u}_+(0,\eta)+\hat{u}_-(0,\eta),
$$
where we define $\hat{u}_+(0,\eta)$ as the $k$-scaled
Fourier transform in $y$ of the incoming wave
with amplitude ${A}$ and direction $\Theta$
(recall $\eta_0 = \sin\Theta$),
$$
 u_+(0,y) = u_{\rm inc}(0,y) = 
 A(y)e^{ik\eta_0 y}.
$$
We then
want to find $\hat{u}_-(0,\eta)$ such that
$$
T_0(x,\eta) \hat{u}(0,\eta) = T_+(x,\eta) \hat{u}_+(0,\eta)+T_-(x,\eta) \hat{u}_-(0,\eta).
$$
To achieve this, it is necessary and sufficient that the
values of the functions and their derivatives agree at $x=0$, 
since both sides satisfy the same second order ODE. This gives us the
linear relations
\begin{align*}
  \hat{u}(0,\eta)&=\hat{u}_+(0,\eta)+\hat{u}_-(0,\eta),\\
T'_0(0,\eta) \hat{u}(0,\eta) &= T'_+(0,\eta) \hat{u}_+(0,\eta)+T'_-(0,\eta) \hat{u}_-(0,\eta),
\end{align*}
from which we can deduce
$$
\hat{u}_-(0,\eta) = \frac{T'_+(0,\eta) -T'_0(0,\eta)}{T'_0(0,\eta) -T'_-(0,\eta)}
\hat{u}_+(0,\eta)=: T(\eta)\hat{u}_+(0,\eta).
$$
It follows that
$$
\hat{u}(x,\eta) = T_0(x,\eta) \hat{u}(0,\eta)
= T_0(x,\eta) (\hat{u}_+(0,\eta)+\hat{u}_-(0,\eta))
= T_0(x,\eta)(1+T(\eta))\hat{u}_+(0,\eta).
$$
The decomposition is not valid at the roots of $\Ai(\zeta(0))=\Ai(k^{\frac23}(\eta^2-1))$. Another form is available, which is valid at all points.
It is given in the following Lemma.

\begin{lem}
One can express $T(\eta)$ as follows
$$T(\eta)=-\alpha\frac{\Ai(\zeta_-(0))}{\Ai(\zeta_+(0))},
\qquad
1+T(\eta) = 
\bar\alpha\frac{\Ai(\zeta(0))}{\Ai(\zeta_+(0))},
$$
\end{lem}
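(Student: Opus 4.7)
The plan is to work out both identities by a direct computation starting from the definition
$$
T(\eta) = \frac{T_+'(0,\eta) - T_0'(0,\eta)}{T_0'(0,\eta) - T_-'(0,\eta)}
$$
and the chain rule. Since $\zeta'(x)=k^{2/3}$, $\zeta_+'(x)=-\alpha k^{2/3}$, $\zeta_-'(x)=-\bar\alpha k^{2/3}$, the three logarithmic derivatives at $x=0$ that appear in $T(\eta)$ are
$$
T_0'(0,\eta) = k^{2/3}\,\frac{\Ai'(\zeta(0))}{\Ai(\zeta(0))},\qquad
T_\pm'(0,\eta) = -\alpha^{\pm1}k^{2/3}\,\frac{\Ai'(\zeta_\pm(0))}{\Ai(\zeta_\pm(0))},
$$
with the convention $\alpha^{+1}=\alpha$ and $\alpha^{-1}=\bar\alpha$. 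The key ingredient besides \eqref{airydecomp1} is its derivative, which (using that $\alpha^3=-1$, so $\alpha^2=-\bar\alpha$ and $\bar\alpha^2=-\alpha$) reads
$$
\Ai'(\zeta(0)) = \bar\alpha\,\Ai'(\zeta_+(0)) + \alpha\,\Ai'(\zeta_-(0)).
$$

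I would then clear denominators in the numerator of $T(\eta)$, multiplying through by $\Ai(\zeta_+(0))\Ai(\zeta(0))/k^{2/3}$, and substitute both \eqref{airydecomp1} and the derivative identity for $\Ai'(\zeta(0))$. The crucial algebraic miracle is that the two diagonal terms involving $\Ai(\zeta_+(0))\Ai'(\zeta_+(0))$ combine with coefficient $\alpha^2+\bar\alpha=0$ and so vanish; the same treatment of the denominator produces a vanishing coefficient $\alpha+\bar\alpha^2=0$ in front of $\Ai(\zeta_-(0))\Ai'(\zeta_-(0))$. After these cancellations one is left with
\begin{align*}
T_+'(0,\eta)-T_0'(0,\eta) &= \tfrac{-k^{2/3}}{\Ai(\zeta_+(0))\Ai(\zeta(0))}
\Bigl[\Ai'(\zeta_+(0))\Ai(\zeta_-(0)) + \alpha\,\Ai'(\zeta_-(0))\Ai(\zeta_+(0))\Bigr],\\
T_0'(0,\eta)-T_-'(0,\eta) &= \tfrac{k^{2/3}}{\Ai(\zeta(0))\Ai(\zeta_-(0))}
\Bigl[\bar\alpha\,\Ai'(\zeta_+(0))\Ai(\zeta_-(0)) + \Ai'(\zeta_-(0))\Ai(\zeta_+(0))\Bigr].
\end{align*}
Multiplying the bracket in the denominator by $\alpha$ turns it into the bracket in the numerator (again because $\alpha\bar\alpha=1$), and so the brackets cancel in the ratio, leaving exactly
$$
T(\eta) = -\alpha\,\frac{\Ai(\zeta_-(0))}{\Ai(\zeta_+(0))}.
$$

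The second identity is then immediate: writing $1+T(\eta)$ over the common denominator $\Ai(\zeta_+(0))$ and using \eqref{airydecomp1} at $z=\zeta(0)$ together with $\bar\alpha^2=-\alpha$, one finds $\Ai(\zeta_+(0))-\alpha\Ai(\zeta_-(0)) = \bar\alpha\,\Ai(\zeta(0))$, which gives the stated formula. The only real obstacle is bookkeeping: one must recognize that the identities $\alpha^2+\bar\alpha=0$ and $\alpha+\bar\alpha^2=0$ are precisely what make the two Wronskian-like cross terms collapse, and that the resulting numerator and denominator brackets differ by a factor $\alpha$. Once that structure is spotted the proof is a short chain of substitutions.
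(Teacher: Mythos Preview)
Your proof is correct. Both identities follow exactly as you describe, and the key cancellations $\alpha^2+\bar\alpha=0$ and $\alpha+\bar\alpha^2=0$ are used in the right places.

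The paper's argument is organized a bit differently and is worth noting. Rather than working directly with the logarithmic derivatives at $x=0$, the paper computes the functions $T_\pm(x,\eta)-T_0(x,\eta)$ for general $x$. Substituting the decomposition \eqref{airydecomp1} for $\Ai_0(0)$ and $\Ai_0(x)$ into the numerator of each difference makes the diagonal terms $\Ai_\pm(0)\Ai_\pm(x)$ cancel \emph{automatically}, leaving a common Wronskian-type factor $\Ai_-(0)\Ai_+(x)-\Ai_+(0)\Ai_-(x)$. One obtains the functional identity
\[
T_+-T_0 \;=\; \alpha\,\frac{\Ai(\zeta_-(0))}{\Ai(\zeta_+(0))}\,(T_--T_0),
\]
valid for all $x$, and then a single differentiation at $x=0$ gives the formula for $T(\eta)$. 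The derivative identity $\Ai'(\zeta(0))=\bar\alpha\,\Ai'(\zeta_+(0))+\alpha\,\Ai'(\zeta_-(0))$ is never invoked explicitly, nor are the numerical cancellations $\alpha^2+\bar\alpha=0$; the algebra is absorbed into the observation that the Wronskian factor is common to both differences. Your route is a direct head-on computation at $x=0$ and requires tracking more terms, but it is equally rigorous and arguably makes the mechanism of cancellation more visible. The second identity is obtained in the same way in both approaches.
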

\begin{proof}
To simplify notation we write $\Ai_{+,-,0}(x):=\Ai(\zeta_{+,-,0}(x))$.
Then, using \eq{airydecomp1},
\begin{align*}
T_\pm-T_0 &= 
\frac{\Ai_\pm(x)}{\Ai_\pm(0)}-\frac{\Ai_0(x)}{\Ai_0(0)}
=\frac{\Ai_\pm(x)\Ai_0(0)-\Ai_\pm(0)\Ai_0(x)}{\Ai_\pm(0)\Ai_0(x)}\\
&=\frac{\Ai_\pm(x)
(\alpha\Ai_+(0)+\bar\alpha\Ai_-(0))
-\Ai_\pm(0)(\alpha\Ai_+(x)+\bar\alpha\Ai_-(x))}{\Ai_\pm(0)\Ai_0(x)}
\\
&=\pm\alpha^{\mp 1}\frac{\Ai_-(0)\Ai_+(x)-\Ai_+(0)\Ai_-(x)}{\Ai_\pm(0)\Ai_0(x)}.
\end{align*}
Hence,
$$
T_+-T_0 = 
(T_--T_0)\frac{\Ai_-(0)}{\Ai_+(0)}(-\bar\alpha^2)=
(T_--T_0)\frac{\Ai_-(0)}{\Ai_+(0)}\alpha.
$$
The first identity follows upon differentating this expression with respect to $x$.
The second identity is then given by another application of
\eq{airydecomp1}.
\end{proof}

It follows now that
$$
  T_0(x,\eta)(1+T(\eta)) = \bar\alpha\frac{\Ai(\zeta(x))}{\Ai(\zeta(0)}
  \frac{\Ai(\zeta(0))}{\Ai(\zeta_+(0))}
  = 
  \bar\alpha\frac{\Ai(\zeta(x))}{\Ai(\zeta_+(0))}.
$$
Since we know $\hat{u}_+(0,\eta)$ we can thus express the full solution 
as 
$$
  \hat{u}(x,\eta_0+\eta) = \bar\alpha\frac{\Ai(k^{\frac23}(x-X))}{\Ai(\alpha k^{\frac23}X)}\hat{u}_+(0,\eta_0+\eta),
$$
where we defined
$$
   X(\eta) = 1- (\eta_0+\eta)^2.
$$
In this expression, one notices that the denominator never vanish because all the roots of the Airy function are on the negative real axis.

Finally, since 
$$
 u_+(0,y) = A(y)e^{ik\eta_0 y} \quad\Rightarrow\qquad
  \hat{u}_+(0,\eta) =\hat{A}(\eta-\eta_0),
$$
we get
$$
  \hat{u}(x,\eta_0+\eta) = \bar\alpha\frac{\Ai(k^{\frac23}(x-X))}{\Ai(\alpha k^{\frac23}X)}
  \hat{A}(\eta).
$$
We write this as
$$
{ \hat u}(x, \eta_0+\eta)={ \hat v}(\eta,x,k)
 {\hat{A}}(\eta),
$$
where
\begin{equation}\label{vhatdef2}
{\hat v}(\eta,x,k) = k^{1/6} P(k,\eta)\Ai(k^{\frac23}(x-X)),
\qquad P(k,\eta) = 
\frac{\bar\alpha k^{-1/6}}{\Ai(\alpha k^{2/3}X)}.
\end{equation}


\section{Construnction of the Gaussian beam approximation}
\label{sec:GBapp}

In this section we derive expressions for a
first order Gaussian beam approximation 
to the solution of \eqref{eq:HHmain}.
A Gaussian beam is a high frequency asymptotic solution to
the Helmholtz equation. To model a general solution of
\eqref{eq:HHmain}, superpositions of Gaussian beams are used.
We give the general form of a Gaussian beam and their superposition
 in $\Real^2$ below. The derivations of the expressions
 can be found in \cite{Liu3}.
 
The Helmholtz equation with a general index of refraction $n(x)$
reads
\be\label{eq:HHgeneral}
  \Delta u + k^2n(x)^2u = 0.
\ee
When $n(x)$ is real, the equation models wave propagation, but
it has a well-defined solution also when $n(x)$ is imaginary.
However, Gaussian beams can only be defined for real $n(x)$.

A first order Gaussian beam for \eq{eq:HHgeneral} has the form
\begin{equation} \label{eq:gbform}
  v_b(\x) = 
  a(s)e^{ik(S(s)+(\xsm-\gamma(s))\cdot\psm(s)
  +\frac12(\xsm-\gamma(s))^TM(s)(\xsm-\gamma(s))},\qquad s=s^*(\x),
\end{equation}
where $a(s)\in\Cnumbers$ is the amplitude, $S(s)\in \Real$ the reference phase, 
$\p(s)\in\Real^2$ the phase gradient 
and $M(s)\in \Cnumbers^{2\times 2}$ the phase Hessian.
Moreover, $\gamma(s)\in\Real^2$ is the {\it central ray},
which agrees with the rays of geometrical optics.
An example of a Gaussian beam is shown in Figure~\ref{fig:gbsketch}.
In \eqref{eq:gbform}
the parameter $s$ depends on the point of evaluation $\x$.
Normally one takes the $s$-value for
the point on the central ray that is closest to $\x$, as indicated in 
Figure~\ref{fig:gbsketch}. However, in the analysis below we
make a simpler choice.
By a result in \cite{Ralston:82}
the Hessian $M$ will always have a positive definite imaginary part.
The solution $v_b$ will therefore be a "fattened" version of the central ray,
with a Gaussian profile normal to the ray with a width determined by
$M$. 
\begin{figure*}[t]
 \begin{center}
 \includegraphics[width=.3\linewidth]{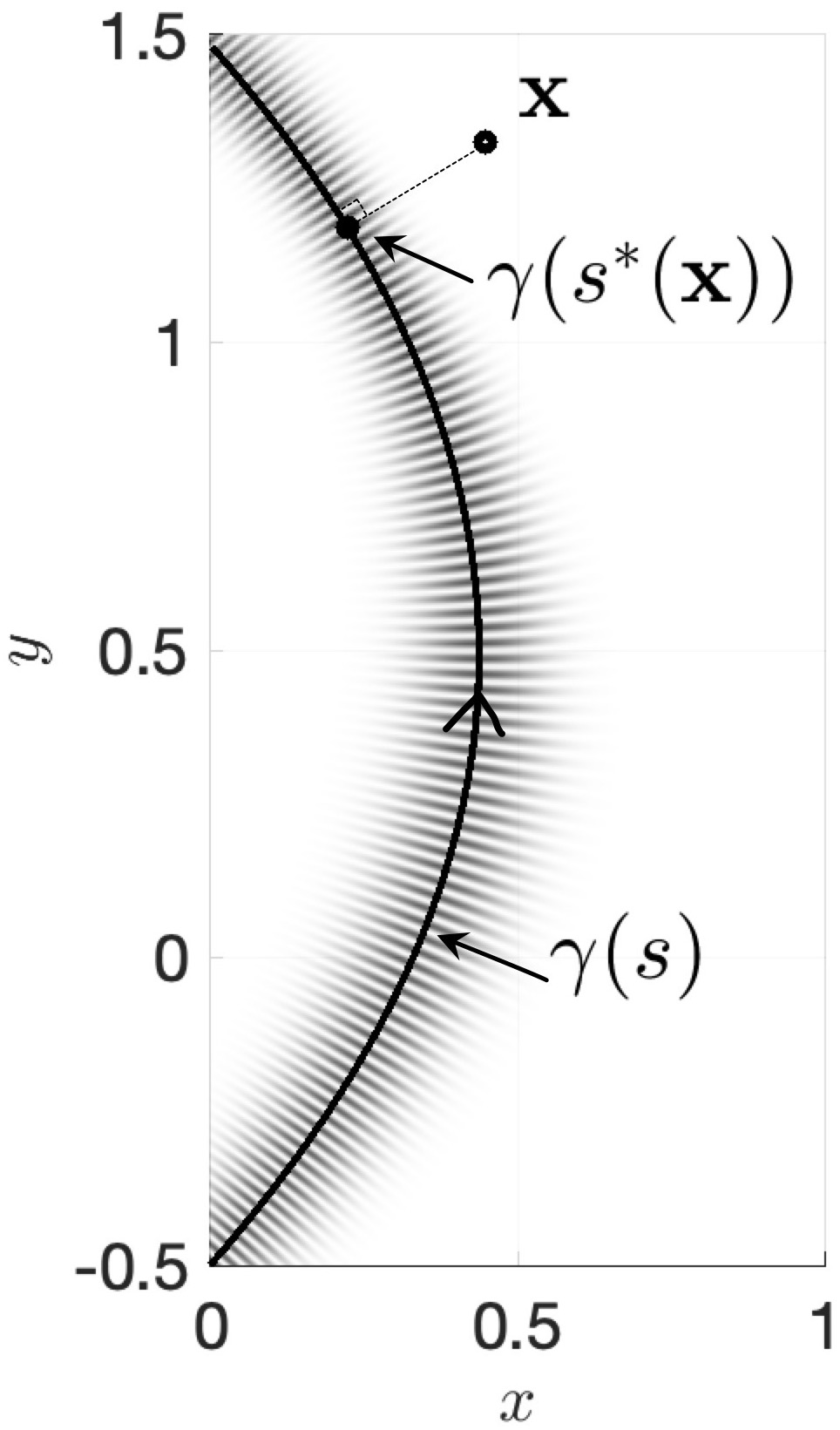}
 \end{center}
 \caption{A Gaussian beam starting at $(x,y)=(0,-0.5)$ with
 direction $\eta_0=\sin\Theta=3/4$. The central ray $\gamma$ is indicated with
 a solid line.
 }
 \label{fig:gbsketch}
\end{figure*}

The $s$-dependent parameters in the Gaussian beam are all given by
ODEs \cite{Liu3}, as follows
\begin{align}\label{eq:GBode}
\frac{d\gamma}{ds}&= 2\p,\qquad
 \frac{d\p}{ds} =\nabla n^2(\gamma),\qquad
 \frac{dM}{ds} =D^2(\nabla n^2)(\gamma)-2M^2,\\
 \frac{dS}{ds} &=2n^2(\gamma),\qquad
 \frac{da}{ds} =-{\rm tr}(M)a.\nonumber
\end{align}
The initial data for $\gamma$ and $\p$ is given by the starting point ($x_0,y_0)$
and direction $(\xi_0,\eta_0)$
of the beam,
$$
  \gamma(0)=\Vector{x_0 \\y_0}, \qquad \p(0)=\Vector{\xi_0 \\ \eta_0}.
$$
In order to form an admissible Gaussian beam, $M(0)$
must always satisfy
\begin{equation}\label{eq:M0cond}
M(0)^T=M(0), \qquad
M(0)\gamma'(0)=\p'(0),
 \qquad {\bv a}^T({\rm Im} M(0)){\bv a}>0, \quad 
 \text{when ${\bv a}\perp\gamma'(0)$.}
\end{equation}
The choice of $M(0)$ and the precise form of the incoming wave 
finally determine the initial data for $S$ and $a$. 
We come back to this issue below.

To build more general solutions we use superpositions of Gaussian beams.
We assume that the incoming wave is known along a curve $\Gamma$ in $\Real^2$,
which we can parameterize with the parameter $z$, so that $\Gamma(z) = (x_0(z),y_0(z))$.
For each point on $\Gamma$ we launch one Gaussian beam in the direction
$\theta_0(z)$
of the wave at that point. The parameters of the
beams then also depend on $z$ and we write $\gamma=\gamma(s;z)$, $\p=\p(s;z)$, etc.
This gives the beams $v_b=v_{\rm beam}(\x;z)$, from which we finally construct
the Gaussian beam superposition
\begin{equation}\label{eq:gbsuperpos}
u_{GB}(\x) = \sqrt{\frac{k}{2\pi}}\int v_{\rm beam}(\x;z)dz.
\end{equation}
See \cite{Liu3} for more details. In a numerical scheme the
$z$-variable is discretized and for each discrete value,
the ODEs \eqref{eq:GBode} are solved with a numerical ODE
method. The superposition \eqref{eq:gbsuperpos}
is subsequently computed using numerical quadrature.

\subsection{Expressions for Gaussian beam parameters}

In \eq{eq:HHmain} we have the index of refraction
$n(x)=\sqrt{1-x}$.
 The ODEs
\eqref{eq:GBode}
 can be solved explicitly and we
get analytic formulae for all parameters in the Gaussian
beam.
To show this we
 let $\p=(\xi,\eta)$ and $\gamma=(x,y)$ and also 
 recall that
$
\p(0)=
({\xi_0,\eta_0}),
$
where 
$
\xi_0^2+\eta_0^2=1.
$
Since we let the beam start at $x_0=0$, we set
 $\gamma(0)=(0,y_0)$.
With the particular choice of $n$ 
it follows from \eqref{eq:GBode} that
$\xi'(s) = -1$ and $\eta'(s)=0$.
Hence,
$$
 \xi(s) = \xi_0-s,\qquad
 \eta(s) = \eta_0.
$$
For the positions, we get $x'(s)=2\xi(s)=2\xi_0-2s$ and
$y'(s)=2\eta(s)=2\eta_0$.
By also using the initial data we obtain
$$
 x(s) =2s\xi_0-s^2,\qquad 
 y(s) =y_0+2s\eta_0.
$$
The caustic $x=x_c$ is located at the point where the ray
turns back, i.e. where $x'(s)=0$, which gives $s=\xi_0$ and
$$
  x_c=x(\xi_0)=\xi_0^2.
$$ 
Note that all the rays are confined to the region $x\leq x_c<1$,
where the index of refraction is real-valued. The fact that
$n(x)$ is complex-valued for $x>1$ therefore
does not affect the Gaussian beams.

We also need to compute the coefficients corresponding to the
phase $S$, the second derivative of the phase $M$ and the 
amplitude $a$. We have
\begin{align*}
\frac{dS}{ds} &=2 n^2(x(s)) = 2(1-x(s)) = 2[1-2s\xi_0+s^2],\qquad S(0)=S_0,
\end{align*}
so the phase is a third order polynomial,
$$
S(s) = S_0+2s-2s^2\xi_0+\frac23s^3.
$$
For $M$ we have the Riccati equation
\begin{align*}
 \frac{dM}{ds} &=D^2(n^2)(x(s))-2M(s)^2 = -2M(s)^2, \qquad M(0)=M_0,
\end{align*}
with the solution
$$
  M(s)=(I+2sM_0)^{-1}M_0.
$$
The matrix $M_0$ must satisfy the conditions in \eqref{eq:M0cond}.
We pick 
$$
M_0=Q+iP, \qquad
  P = \Vector{\eta_0^2 & -\eta_0\xi_0\\
  -\eta_0\xi_0 & \xi_0^2},\qquad
  Q = \frac12 \Vector{-\xi_0 & -\eta_0\\
-\eta_0 & \xi_0}.
$$
Note that $P,Q$ are symmetric,
$P$ is the orthogonal projection on $\p_0^\perp$,
and 
$2Q\p_0=-\e_1=\p'(0)$. Moreover, $(\p_0^\perp)^TQ\p_0^\perp = \xi_0>0$.

Next, one checks that
$$
M(s) = (I+2sM_0)^{-1}M_0 = \frac{1}{q(s)}(I+2s(iI-M_0))M_0,
$$
where 
$$
\det(I+2sM_0)= 1+2s\mbox{Tr}M_0+(2s)^2\mbox{det}M_0=
1+2is-s^2\beta=q(s),\qquad \beta = 1+2i\xi_0.
$$
We note that $q$ is related to the geometrical spreading
of the beams.
Further manipulations, using the facts that $P^2=P$, $4Q^2=I$ 
and $2(PQ+QP-Q)=\xi_0 I$ reveals that $M(s)$ can be written simply as
$$
  M(s) = \frac{1}{q(s)}\left(M_0- \frac12 s\beta I\right).
$$
We let $m_{ij}$ be the elements of $M$ and deduce that
\begin{align}\label{m11}
m_{11}(s)&=-\frac1{2q(s)}(\xi_0-2i\eta_0^2+s\beta)=-\frac1{2q(s)}(-2i+(\xi_0+s)\beta),\\
\label{m22}
 m_{22}(s)&=
 \frac1{2q(s)}(\xi_0+2i\xi_0^2-s\beta)=
 \frac1{2q(s)}(\xi_0-s)\beta.
\end{align}
Finally, for $a$,
$$
\frac{da}{ds} = -{\rm tr}(M(s))a, \qquad a(0)=a_0.
$$
We note that if $\lambda_0$ and $\lambda_1$ are eigenvalues of $M_0$, then
$q(s) = \det(I+2sM_0)=(1+2s\lambda_0)(1+2s\lambda_1)$ and
$$
{\rm tr} (M(s)) = \frac{\lambda_0}{1+2s\lambda_0}+\frac{\lambda_1}{1+2s\lambda_1}=
\frac12 \frac{d}{ds}\log q(s).
$$
It follows that
\begin{equation}\label{eq:Aexp}
  a(s) = \frac{a_0}{\sqrt{q(s)}}.
\end{equation}
The last thing needed to make the expression
\eqref{eq:gbform} for the Gaussian beam well defined, is 
to decide which $s$-value to use for a given $\x$, i.e.
the function $s^*(\x)$. 
As mentioned above, this is normally taken to
be the $s$-value for
the point on the central ray that is closest to $\x$. Here, however, to
simplify, we just take the $s$-value for the point of the curve that
has the same $y$-coordinate; see Figure~\ref{fig:gbsketch2}.
\begin{figure*}[t]
 \begin{center}
 \includegraphics[width=.3\linewidth]{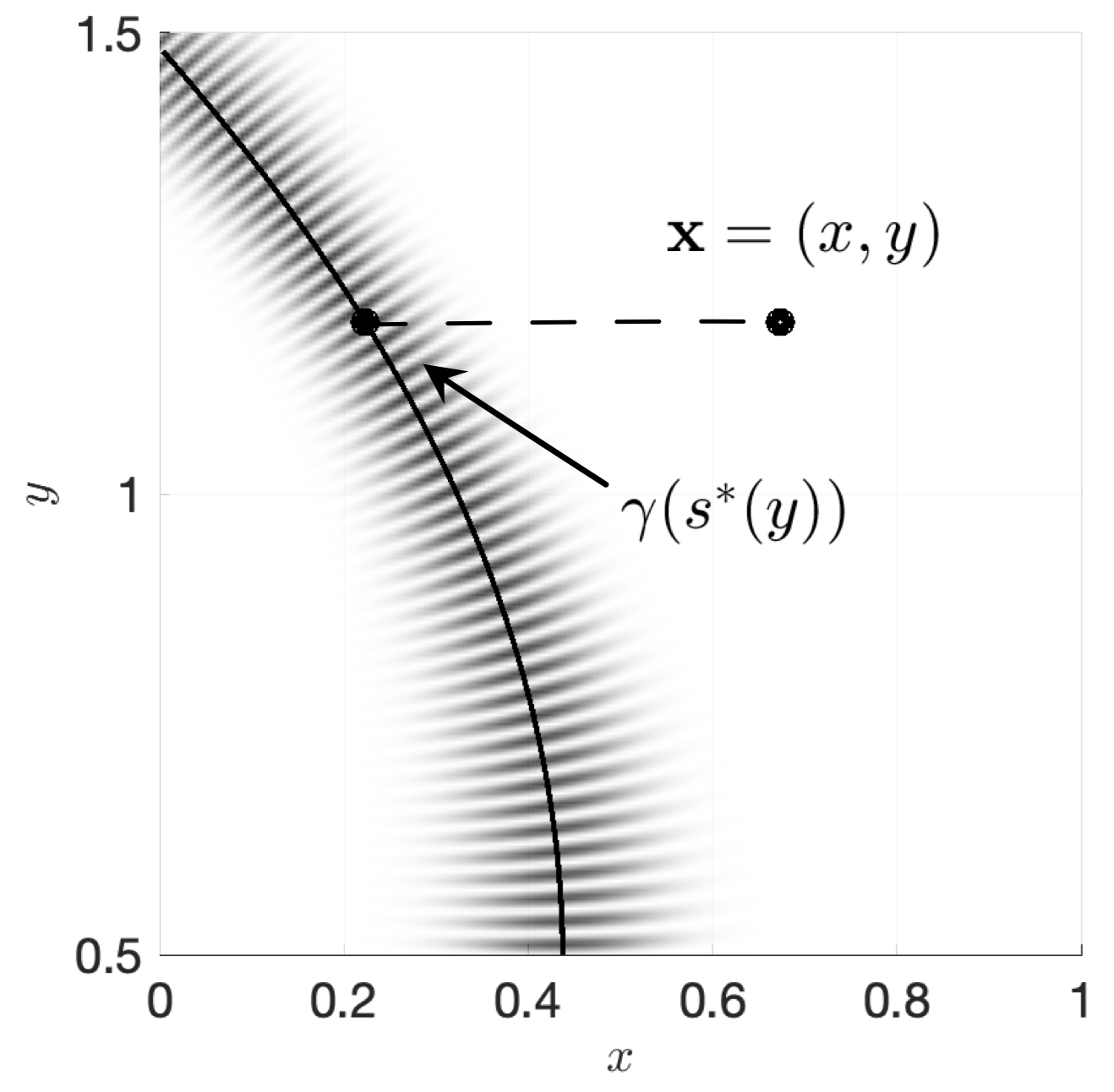}
 \end{center}
 \caption{The simplified map $s^*(y)$.
 }
 \label{fig:gbsketch2}
\end{figure*}

With $\x=(x,y)$ this leads  to
$$
s^*(\x) = s^*(y) =\frac{y - y_0}{2\eta_0}.
$$
Then $\x-\gamma(s^*(\x)) = (x-x(s^*),\, 0)^T$
and \eqref{eq:gbform} becomes
$$
  v_b(x,y) = 
  a\left(s^*\right)\exp\left[
  ik\left(S(s^*) + 
  (x-x(s^*))\xi(s^*) + \frac12m_{11}(s^*)(x-x(s^*))^2
  \right)
  \right],
$$
with $x(s)$, $\xi(s)$, $S(s)$, $m_{11}(s)$, $a(s)$ and $s^*(y)$ given
above.
\begin{rem}
The value of
$q(s)=1+2si-s^2\beta$ crosses the negative real 
axis when $s=1/\xi_0$. 
To find a better branch cut for the square root
in the expression \eqref{eq:Aexp} for $A(s)$
we note first that the equation
$$
\Im \frac{q(s)}{\beta}=0 \quad\Leftrightarrow\quad
   (\Im\beta)(\Re q(s))=(\Re\beta)(\Im q(s))\quad\Leftrightarrow\quad
   2\xi_0(1-s^2) = 2s(1-\xi_0s)
$$
has the unique solution $s=\xi_0$. 
Therefore, the equation $q(s)=t\beta$ with $t\in\Real$ only has 
a solution if $t=q(\xi_0)/\beta=(1-\xi_0^2)>0$. Hence, 
$q(s)$
never crosses the line $\{-t\beta\,:\, t\geq 0\}$, which we
therefore use as branch cut. 
This guarantees a smooth
dependence of the Gaussian beam on $s$ for all $s\geq 0$. 
It can be written as
$\sqrt{z\beta^*}/\sqrt{\beta^*}$ if $\sqrt{\cdot}$ is
the usual square root with branch cut along the negative real axis.
\end{rem}

\subsection{Gaussian beam superposition}\label{GBsuper}

We will now prepare the superposition. 
The initial curve $\Gamma$ is simply the $y$-axis so
that
$\Gamma(z)=\gamma(0;z)=(x_0(z),y_0(z))=(0,z)$.
By assumption
the incoming wave propagates in the same direction 
$(\xi_0,\eta_0)$
for all $z$.
 Moreover, the same initial data for $M$ is used for all $z$.
This means that $x$, $\xi$, $\eta$, $M$ and the 
geometrical spreading parameter $q$ are
all independent of $z$. Only $y$, $S$ and $a$ depend on $z$.
We obtain from \eqref{eq:gbform},
\begin{equation*} 
  v_{\rm beam}(x,y;z) = 
  a(s;z)e^{ik(S(s;z)+(\xsm-\gamma(s;z))\cdot\psm(s)
  +\frac12(\xsm-\gamma(s;z))^TM(s)(\xsm-\gamma(s;z))},
  \qquad s=s^*(\x).
\end{equation*}
To derive the initial data 
$a_0(z)$, $S_0(z)$
for $a(s;z)$ and $S(s;z)$
we consider the trace of $v_{\rm beam}$ 
and $u_{GB}$
on $x=0$.
To get explicit formuale we also 
let $s^*(\x)$ be the $s$-value for
the point on the curve that has the same $x$-value,
i.e. $s^*(0,y)=0$.
That gives
\begin{align*}
  v_{\rm beam}(0,y;z) &= 
  a_0(z)e^{ik[S_0(z)+((0,y)-\gamma(0;z))\cdot\psm_0
  +\frac12((0,y)-\gamma(0;z))^TM(0)((0,y)-\gamma(0;z))]}
  \\
  &=
  a_0(z)e^{ik[S_0(z)+(y-z)\eta_0
  +\frac12(y-z)^2m_{22}(0)]}.
\end{align*}
and for $u_{GB}$, 
\begin{align*}
u_{GB}(0,y) &=  e^{iky\eta_0}
\sqrt{\frac{k}{2\pi}}
\int
  a_0(z)e^{ik[S_0(z)-z\eta_0]+
  \frac12ik(y-z)^2m_{22}(0)}dz\\
  &=
  a_0(y)e^{iky\eta_0}
\sqrt{\frac{k}{2\pi}}\int
  e^{ik[S_0(z+y)-(z+y)\eta_0]+\frac12ikz^2m_{22}(0)}dz
  +O(k^{-1}).
\end{align*}
To match this with the incoming wave
on $x=0$, i.e. $u_{\rm inc}(0,y)=A(y)\exp(ik\eta_0 y)$,
we take
$$  S_0(z) = \eta_0 z,
$$
and
$$
 a_0(y) = A(y) \left(\sqrt{\frac{k}{2\pi}}\int
  e^{\frac12ikz^2m_{22}(0)}dz\right)^{-1}=
A(y)\sqrt{-im_{22}(0)}.
$$
Thus the expressions for the Gaussian beam coefficients are
\begin{subequations}\label{gbcoeffdef}
\begin{align}
 x(s) &= 2s\xi_0-s^2,\\
 y(s;z) &=z+2s\eta_0,\\
 \xi(s) & = \xi_0-s,\\
 \eta(s) &= \eta_0,\\
S(s;z) &= \eta_0 z+2s-2s^2\xi_0+\frac23s^3,\\
    m_{11}(s)&= \frac{2i-(\xi_0+s)\beta}{2q(s)},\\
  a(s;z) &= 
  \frac{A(z)}{\sqrt{q(s)}}\sqrt{-im_{22}(0)}.
\end{align}
\end{subequations}
This gives us the simplified expression for $v_{\rm beam}$,
\be\label{vgbdef}
  v_{\rm beam}(x,y;z) = 
  a\left(s^*;z\right)
  e^{
  ik\left(S(s^*;z) + 
  (x-x(s^*))\xi(s^*) + \frac12m_{11}(s^*)(x-x(s^*))^2
  \right)
  },\quad
  s^*(y;z) =\frac{y - z}{2\eta_0},
\ee
which, together with \eq{gbcoeffdef}, define $u_{GB}$ via
\be\label{ugbsupdef}
u_{GB}(x,y)=  \sqrt{\frac{k}{2\pi}}\int v_{\rm beam}(x,y;z)  dz.
\ee
We will continue now to simplify \eq{vgbdef}
and compute the $k$-scaled Fourier transform of $u_{GB}$ in $y$.
Since $x-x(s^*)=x-2s^*\xi_0-{s^*}^2$
and $\delta = \xi_0^2-x$ by \eqref{deltadef}
we have
$$
  v_{\rm beam}(x,y;z) = 
A(z)f(y-z) e^{ik\eta_0 y},
\qquad
f(2\eta_0s^*) = 
\sqrt{-im_{22}(0)}
\frac{e^{ 
  ik S_g(
  s^*;\delta)}}{\sqrt{q(s^*)}}
$$
where
\begin{align*}
S_g(s^*;\delta)&:=
S(s^*;z) + 
  (x-x(s^*))\xi(s^*) + \frac12m_{11}(s^*)(x-x(s^*))^2
  -\eta_0 y
\\
  &=
\eta_0 (z-y)+2s^*-2{s^*}^2\xi_0+\frac23{s^*}^3 +
  (x-2s^*\xi_0+{s^*}^2)(\xi_0-s^*)+
\frac12 m_{11}(s^*)(x-2s^*\xi_0+{s^*}^2)^2\\
  &=
2\xi_0^2s^*-2{s^*}^2\xi_0+\frac23{s^*}^3 +
  ((s^*-\xi_0)^2-\delta)(\xi_0-s^*)+
\frac12 m_{11}(s^*)((s^*-\xi_0)^2-\delta)^2\\
  &=
\frac23\xi_0^3 +\frac23(s^*-\xi_0)^3 +
  ((s^*-\xi_0)^2-\delta)(\xi_0-s^*)+
\frac12 m_{11}(s^*)((s^*-\xi_0)^2-\delta)^2.
\end{align*}
Then we can write
$$
   u_{GB}(x,y) = 
   \sqrt{\frac{k}{2\pi}}({A}\ast f)(y)e^{ik\eta_0 y}.
$$
Since ${A}\in {\mathcal S}$ the beam $v$ is always integrable
in $z$, so that $u_{GB}$ in \eq{ugbsupdef}
is well-defined. However, 
there is no guarantee that $u_{GB}(x,\cdot)$ is in $L^1$;
in general it is not.
When we compute its Fourier
transform we therefore use Lemma \ref{lem:Fregularize} and \eq{Fdefdist}.
By Lemma \ref{qest} and \ref{m11est} below, 
$q$ is bounded away from zero and $\Im m_{11}$
is strictly positive. Hence, 
 $f\in L^\infty$. Therefore,
\begin{align*}
{ \hat u}_{GB}(x, \eta_0+\cdot)
 &=\lim_{t\to\infty}
 \Fcal_k\left(\psi_t u_{GB}(x,\cdot)
 \exp(-ik\eta_0 \cdot)
 \right)\\
 &=\lim_{t\to\infty}
 \sqrt{\frac{k}{2\pi}} \Fcal_k\left(\psi_t 
({A}\ast f)\right)
 =\lim_{t\to\infty} \Fcal_k(f\psi_t) 
\Fcal_k({A}),
  \end{align*}
  where we can choose $\psi_t(w) :=\psi((w/2\eta_0-\xi_0)/t)$, that is,
  $b_0=1/2\eta_0\neq 0$ and $b_1=-\xi_0$ in Lemma \ref{lem:Fregularize}.
We then compute
\begin{align*}
 \Fcal_k(f\psi_t)(\delta,\eta)
&=\sqrt{\frac{k}{2\pi}}
\int \psi_t(w) f(w) e^{-ik\eta w}dw
= \sqrt{\frac{k}{2\pi}}
\int \psi_t(2\eta_0 s^*) f(2\eta_0s^*) e^{-2ik\eta \eta_0s^*}
2\eta_0 ds^*
\\
&= \sqrt{-im_{22}(0)}\sqrt{\frac{k}{2\pi}}
\int \psi_t(2\eta_0 s^*) 
  \frac{e^{ 
  ik S_g(
  s^*;\delta)}}{\sqrt{q(s^*)}}
e^{-2ik\eta \eta_0s^*}
2\eta_0 ds^*\\
&= \sqrt{-im_{22}(0)}\sqrt{\frac{k}{2\pi}}
\int \psi(\theta/t) 
  \frac{e^{ 
  ik S_g(
  \theta+\xi_0;\delta)}}{\sqrt{q(\theta+\xi_0)}}
e^{-2ik\eta \eta_0(\theta+\xi_0)}
2\eta_0 d\theta
\end{align*}
Here we made the change of variables 
$w=2\eta_0 s^*$ and $\theta = s^*-\xi_0$.
Moreover,
\begin{align*}
S_g\left(\theta+\xi_0;\delta\right)&=
\frac23\xi_0^3 +\frac23\theta^3 
  -(\theta^2-\delta)\theta+
\frac12 m_{11}(\theta+\xi_0)(\theta^2-\delta)^2\\
&=
\frac23\xi_0^3 -\frac13\theta^3 
  +\delta\theta+
\frac12 m_{11}(\theta+\xi_0)(\theta^2-\delta)^2
\\
&=\frac23\xi_0^3+\phi_g(\delta,\theta,\eta) +
2\eta_0\eta\theta,
\end{align*}
where we have introduced the Gaussian beam phase $\phi_g$ as,
\be\label{eq:gbphase}
  \phi_g(\delta,\theta,\eta) = 
  -\frac{1}{3}\theta^3+\theta(\delta-2\eta_0\eta)
  +\frac12m_{11}(\xi_0+\theta)(\theta^2-\delta)^2.
\ee
Since $m_{22}(0)= \xi_0\beta/2=$ by
\eqref{m22} and $q(\xi_0)=\beta\eta_0^2$,
this finally gives
\begin{align*}
 \Fcal_k(f\psi_t)(\delta,\eta)
&= \sqrt{-im_{22}(0)}\sqrt{\frac{k}{2\pi}}
\int \psi(\theta/t) 
  \frac{e^{ 
  ik 
  (\frac23\xi_0^3+\phi_g(\delta,\theta,\eta) 
-2\eta \eta_0\xi_0
)}}{\sqrt{q(\theta+\xi_0)}}
2\eta_0 d\theta\\
&= 2\eta_0\sqrt{-im_{22}(0)}
\sqrt{\frac{k}{2\pi}}
e^{ 
  ik 
  (\frac23\xi_0^3
-2\eta \eta_0\xi_0
)}
\int \psi(\theta/t) 
  \frac{e^{ 
  ik 
 \phi_g(\delta,\theta,\eta) }}{\sqrt{q(\theta+\xi_0)}}
d\theta\\
&= 
\sqrt{-2i\xi_0q(\xi_0)}
\sqrt{\frac{k}{2\pi}}
e^{ 
  ik 
  (\frac23\xi_0^3
-2\eta \eta_0\xi_0
)}
\int \psi(\theta/t) 
  \frac{e^{ 
  ik 
 \phi_g(\delta,\theta,\eta) }}{\sqrt{q(\theta+\xi_0)}}
d\theta\\
&=: 
k^{1/6} P_{GB}(k,\eta) I_t(\eta,\delta,k),
\end{align*}
where
\begin{equation}\label{pgbdef}
P_{GB}(k,\eta)=
2
(\pi \xi_0)^{1/2}
e^{-i\pi/4}
e^{ 
  ik (\frac23\xi_0^3
-2\eta \eta_0\xi_0
)},
\end{equation}
and
\begin{equation}\label{Itdef}
I_t(\eta,\delta,k) =  \frac{k^{1/3}\sqrt{q(\xi_0)}}{2\pi}
\int \psi(\theta/t) 
  \frac{e^{ 
  ik 
 \phi_g(\delta,\theta,\eta) }}{\sqrt{q(\theta+\xi_0)}}
d\theta.
\end{equation}
Then
\begin{equation}\label{vgbhatdef}
{ \hat u}_{GB}(x, \eta_0+\eta)=
{\hat v}_{GB}(\eta,x,k)
 {\hat{A}}(\eta),
\end{equation}
with
$$
{\hat v}_{GB}(\eta,x,k)=\lim_{t\to \infty}\ k^{1/6} P_{GB}(k,\eta)
I_t(\eta,\delta,k).
$$


\section{Properties of the amplitude and phases}
\label{sec:ampphase}


In this section we collect a series of estimates
that we will need for the final proof of the magnitude
of the Gaussian beam error.

\subsection{Geometrical spreading}

Here we show some properties of the
$q$-polynomial in \eqref{qdef} that relates to the
geometrical spreading, repeated here for convenience,
$$
  q(s)= 1 +2is-\beta s^2, \qquad \beta = 1+2i\xi_0.
$$
We have
\begin{lem}\label{qest}
There are positive constants $q_0$ and $q_1$,
independent of $\xi_0$
and $\theta\in\Real$,
such that
$$
   0<q_0(1+\theta^2) \leq |q(\xi_0+\theta)|\leq q_1(1+\theta^2).
$$
Furthermore, there are constants $b_n$ independent of $\theta$ and $\xi_0$
such that
$$
  \left|\frac{d^n}{d\theta^n}\frac{1}{\sqrt{q(\xi_0+\theta)}}\right|\leq
  \frac{b_n}{1+|\theta|^{n+1}}.
$$

\end{lem}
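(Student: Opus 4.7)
The plan is to first rewrite $q(\xi_0+\theta)$ in terms of $s=\xi_0+\theta$ by splitting into real and imaginary parts,
$$
  q(s) = 1+2is-\beta s^2 = (1-s^2)+2is(1-\xi_0 s),
$$
so that $|q(s)|^2 = (1-s^2)^2 + 4s^2(1-\xi_0 s)^2$. Since $\xi_0\in[\bar\xi_0,\bar\xi_1]$ is bounded, the quantities $1+|\theta|$ and $1+|s|$ are equivalent up to constants independent of $\xi_0$, so it suffices to bound $|q(s)|$ in terms of $1+s^2$.

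The upper bound is immediate from the degree-$2$ formula above: each coefficient is uniformly bounded, hence $|q(s)|\le q_1(1+s^2)$. For the lower bound I would observe that $q$ has no real zeros in the regime of interest: if $q(s)=0$ for $s\in\Real$, then both $1-s^2=0$ and $s(1-\xi_0s)=0$, which together force $\xi_0=\pm1$, contradicting \eqref{xi0deltabound}. Consequently the continuous function $(\xi_0,s)\mapsto |q(s)|^2/(1+s^2)^2$ is strictly positive on $[\bar\xi_0,\bar\xi_1]\times\Real$, and since its limit as $|s|\to\infty$ equals $|\beta|^2 \geq 1$ uniformly in $\xi_0$, a standard two-step compactness argument (choose $R$ so the function exceeds $1/2$ for $|s|>R$, then use compactness of $[\bar\xi_0,\bar\xi_1]\times[-R,R]$) yields a uniform lower bound $q_0>0$, giving $|q(s)|\ge q_0(1+s^2)$.

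For the derivative estimate, I would prove by induction on $n$ that
$$
   \frac{d^n}{d\theta^n}\frac{1}{\sqrt{q(\xi_0+\theta)}} = \frac{P_n(\theta)}{q(\xi_0+\theta)^{n+1/2}},
$$
with $P_n$ a polynomial in $\theta$ of degree at most $n$ whose coefficients are polynomials in $\xi_0$ (and thus uniformly bounded). Starting from $P_0\equiv 1$, the differentiation identity
$$
   P_{n+1} = P_n'\,q - (n+\tfrac12)\,P_n\,q'
$$
preserves the degree bound because $\deg q=2$ and $\deg q'=1$. The square-root branch is well defined along the real $\theta$-axis thanks to the branch-cut choice in the Remark following \eqref{eq:Aexp}, so $|q^{n+1/2}|=|q|^{n+1/2}$ unambiguously. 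Combining with the previously established $|q(\xi_0+\theta)|\ge q_0(1+|\theta|)^2$ then yields
$$
   \left|\frac{d^n}{d\theta^n}\frac{1}{\sqrt{q(\xi_0+\theta)}}\right|
   \le \frac{C_n(1+|\theta|)^n}{q_0^{n+1/2}(1+|\theta|)^{2n+1}}
   = \frac{b_n}{(1+|\theta|)^{n+1}}.
$$
The main (mild) obstacle is ensuring that every constant is independent of $\xi_0$; this follows in each step from the compactness of $[\bar\xi_0,\bar\xi_1]$ and the fact that all dependencies on $\xi_0$ enter polynomially through $\beta=1+2i\xi_0$.
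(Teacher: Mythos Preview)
Your proposal is correct and follows essentially the same approach as the paper: both arguments show $q$ has no real zero via the real/imaginary part system, obtain the two-sided bound from the limit $|q(s)|/(1+s^2)\to|\beta|$ plus compactness, and derive the derivative estimate by the same induction $P_{n+1}=P_n'q-(n+\tfrac12)P_nq'$ giving a degree-$n$ numerator over $q^{n+1/2}$. The only cosmetic difference is that you work in the $s=\xi_0+\theta$ variable and then transfer via $1+|s|\asymp 1+|\theta|$, whereas the paper works directly in $\theta$; your explicit joint compactness in $(\xi_0,s)$ makes the $\xi_0$-uniformity slightly cleaner than in the paper's version.
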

\begin{proof}
We first show that $q$ has no real root for the
considered values of $\xi_0$.
Suppose therefore that $q$ has a real root $s=r$. Then
the real and imaginary parts of $q(r)=0$ reads
$$
  1-r^2=0,\qquad 2r-2\xi_0r^2=0.
$$
The only solutions to this system are $r=\xi_0=\pm 1$,
which are both ruled out by \eqref{xi0deltabound}.
Let $\tilde{q}(\theta):=|q(\xi_0+\theta)|/(1+\theta^2)$,
which is then continuous and non-zero for all $\theta$.
For large $\theta$ it is bounded from below and above since
$\lim_{\theta\to\pm \infty}\tilde{q}(\theta)=|\beta|$. In fact, there is a
a constant
$\theta_0$ such that
$$
    \left|\tilde{q}(\theta)-|\beta|\right| \leq \frac{|\beta|}{2},\qquad |\theta|>\theta_0,
$$
uniformly in $\xi_0$, because of the bound \eqref{xi0deltabound}.
We can then take $q_0$ and $q_1$ as 
$$
q_0 = \min\left(\inf_{|\theta|\leq \theta_0} \tilde{q}(\theta),\frac{|\beta|}{2}\right),\qquad
q_1 = \max\left(\sup_{|\theta|\leq \theta_0} \tilde{q}(\theta),\frac{3|\beta|}{2}\right).
$$
The stated bound then follows.

For the second statement, we observe that
there exists a sequence of polynomials $p_n$ of
degree $n$ such that
$$
  \frac{d^n}{d\theta^n}\frac{1}{\sqrt{q(\theta)}}=
  \frac{p_n(\theta)}{q(\theta)^{n+1/2}},
$$
given by the recursion
$$p_{n+1}(\theta)=
p_n'(\theta)q(\theta)-(n+1/2)p_n(\theta)q'(\theta),
$$
thanks to
$$
  \frac{d}{d\theta}\frac{p_n(\theta)}{q(\theta)^{n+1/2}}=
  \frac{p_n'(\theta)q(\theta)-(n+1/2)p_n(\theta)q'(\theta)}{q(\theta)^{n+3/2}}.
$$
Then, by the lower bound on $|q|$ and \eqref{xi0deltabound},
there are constants $C_n$ and $b_n$ depending on $n$ but independent of $\theta$
and $\xi_0$
such that
$$
  \left|\frac{d^n}{d\theta^n}\frac{1}{\sqrt{q(\xi_0+\theta)}}\right|\leq
  \frac{|p_n(\xi_0+\theta)|}{q_0^{n+1/2}(1+\theta^2)^{n+1/2}}
  \leq C_n
  \frac{1+|\xi_0+\theta|^n}{1+|\theta|^{2n+1}}
  \leq C_n
  \frac{1+2^{n-1}(|\xi_0|^n+|\theta|^n)}{1+|\theta|^{2n+1}}
  \leq b_n
  \frac{1}{1+|\theta|^{n+1}}.
$$
This shows the lemma.
\end{proof}

\subsection{Phase}

In this section we define the two
phase functions $\phi_a$ and $\phi_g$ that turn up in our analysis
and show a few properties of them.
The first one, $\phi_a$, is defined as
\begin{equation}\label{phiadef}
  \phi_a(\delta,\theta,\eta) = 
  -\frac{1}{3}\theta^3+\theta(\delta-2\eta_0\eta).
\end{equation}
This has a close connection to the Airy function
and we call it the Airy phase.
Indeed, $\exp(i\theta^3/3)$ is the
Fourier transform of $\Ai(z)$ in ${\mathcal S}'(\Real)$
and therefore, using the regularization of Lemma~\ref{lem:Fregularize},
$$
\lim_{s\to\infty}\frac{k^{1/3}}{2\pi}
 \int \psi(\theta/s)
e^{-ik\phi_a(\delta,\theta,\eta)}
d\theta =
\Ai(k^{2/3}(2\eta_0\eta-\delta)).
$$
The second phase is the
Gaussian beam phase \eqref{eq:gbphase} derived in the previous
section. It can be written as
a sum of $\phi_a$ and a complex correction, by \eqref{m11},
\be\label{pgpadef}
  \phi_g(\delta,\theta,\eta) = 
  \phi_a(\delta,\theta,\eta)
  +\frac12m_{11}(\xi_0+\theta)(\theta^2-\delta)^2, \qquad
  m_{11}(s) = \frac{2i-(\xi_0+s)\beta}{2q(s)}.
\ee
We start by looking at the $m_{11}$ part of the 
Gaussian beam phase.
\begin{lem}\label{m11est} 
For $m_{11}$ it holds that
\begin{align}
  \lim_{s\to \pm\infty} m_{11}(s)s &= 
  \lim_{s\to \pm\infty} -m'_{11}(s)s^2 = \frac12,\label{m11b}\\
  \Im m_{11}(s) &= \frac{\eta_0^2}{|q(s)|^2},\label{m11a}\\
  \left|\frac{d^nm_{11}(\xi_0+\theta)}{d\theta^n}\right|
  &\leq\frac{d_n}{1+|\theta|^{n+1}},\label{m11c} \\
  \left|m_{11}(\xi_0+\theta)\right|
  &\geq\frac{D_0}{1+|\theta|},\label{m11d}
\end{align}
for some constants $d_n$, $D_0$ independent of $\theta$ and $\xi_0$.
\end{lem}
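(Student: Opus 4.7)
The plan is to establish each of the four bounds separately, all starting from the explicit rational representation
$$
m_{11}(s) = \frac{N(s)}{2q(s)}, \qquad N(s) := 2i-(\xi_0+s)\beta = -(\xi_0+s)+2i(\eta_0^2-\xi_0 s),
$$
together with the control on $q$ already given in Lemma~\ref{qest}. For \eqref{m11b} I would simply use the asymptotics $N(s)\sim -\beta s$ and $q(s)\sim -\beta s^2$ as $|s|\to\infty$, which yield $m_{11}(s)\sim 1/(2s)$, and via the quotient-rule identity $m_{11}'(s)=[-\beta q(s)-N(s)q'(s)]/(2q(s)^2)$ the corresponding expansion $m_{11}'(s)\sim -1/(2s^2)$.

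For \eqref{m11a} I would write $m_{11}=N\bar q/(2|q|^2)$, split $N=A+iB$ with $A=-(\xi_0+s)$, $B=2(\eta_0^2-\xi_0 s)$, and $\bar q=C+iD$ with $C=1-s^2$, $D=-2s(1-\xi_0 s)$, and compute $\Im(N\bar q)=AD+BC$ directly. All $s$-dependent contributions cancel using $\xi_0^2+\eta_0^2=1$, leaving the constant $2\eta_0^2$, which after division by $2|q|^2$ gives \eqref{m11a}.

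For \eqref{m11c} I would argue by induction that
$$
\frac{d^n m_{11}}{ds^n}(s) = \frac{P_n(s)}{q(s)^{n+1}},
$$
where $P_n$ is a polynomial of degree at most $n+1$ whose coefficients are themselves polynomials in $\xi_0$, the induction step being $P_{n+1}=P_n'q-(n+1)P_nq'$. Since these coefficients are bounded uniformly for $\xi_0\in[\bar\xi_0,\bar\xi_1]$, one obtains $|P_n(\xi_0+\theta)|\le C_n(1+|\theta|^{n+1})$, and combining with the lower bound $|q(\xi_0+\theta)|\ge q_0(1+\theta^2)$ from Lemma~\ref{qest} yields \eqref{m11c}.

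The most delicate point is \eqref{m11d}, where the substantive step is a uniform lower bound on the numerator. One has
$$
|N(\xi_0+\theta)|^2 = (2\xi_0+\theta)^2 + 4(1-2\xi_0^2-\xi_0\theta)^2,
$$
a quadratic in $\theta$ with positive leading coefficient $|\beta|^2$. It is strictly positive for every $(\xi_0,\theta)$, since $N(s)=0$ would force both $\xi_0+s=0$ and $\eta_0^2=\xi_0 s$, incompatible in sign whenever $\xi_0\neq 0$. Consequently $|N(\xi_0+\theta)|^2/(1+\theta^2)$ is continuous and strictly positive on the compact set $[\bar\xi_0,\bar\xi_1]\times\Real$ with limit $|\beta|^2$ as $|\theta|\to\infty$, so it attains a positive infimum $c_1$ depending only on $\bar\xi_0,\bar\xi_1$. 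Combined with the upper bound $|q(\xi_0+\theta)|\le q_1(1+\theta^2)$ from Lemma~\ref{qest}, this gives \eqref{m11d}. The main obstacle in the whole proof is precisely this uniformity over the compact interval of admissible $\xi_0$; once it is secured, the remaining estimates reduce to direct polynomial manipulations.
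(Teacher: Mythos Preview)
Your argument is correct in substance and, for parts \eqref{m11b}, \eqref{m11a} and \eqref{m11c}, follows essentially the same route as the paper: direct asymptotics of the rational function for the limits, an explicit computation of $\Im(N\bar q)$ for the imaginary part, and the inductive representation $m_{11}^{(n)}=P_n/q^{n+1}$ with $\deg P_n\le n+1$ combined with Lemma~\ref{qest} for the derivative bound.

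For \eqref{m11d} you take a genuinely different route. The paper proceeds by an explicit algebraic estimate: writing $N(\xi_0+\theta)=-(\theta+2\xi_0)+2i(1-\xi_0(\theta+2\xi_0))$, it uses $\sqrt{2}|z|\ge|\Re z|+|\Im z|$ and elementary inequalities in $\xi_0$ to obtain the concrete constant $D_0=\min(1/2,\,1-\bar\xi_1)/(\sqrt{2}q_1)$. Your argument instead shows $|N(\xi_0+\theta)|^2/(1+\theta^2)$ is a strictly positive continuous function of $(\xi_0,\theta)$ with a uniform positive limit $|\beta|^2\ge 1$ as $|\theta|\to\infty$, and concludes it is bounded below. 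This is cleaner and avoids the somewhat delicate chain of inequalities in the paper; the price is that you do not get an explicit $D_0$. One wording slip: $[\bar\xi_0,\bar\xi_1]\times\Real$ is not compact. What you actually need (and have the ingredients for) is that the uniform limit at infinity lets you restrict to a compact box $[\bar\xi_0,\bar\xi_1]\times[-\theta_0,\theta_0]$, on which positivity and continuity give the infimum. State it that way and the argument is complete.
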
 
\begin{proof}
From \eqref{pgpadef} we have
$$
 sm_{11}(s) = s\frac{2i-(\xi_0+s)\beta}{2q(s)}
 =\frac{(2i-\xi_0\beta)s-\beta s^2}{2(1+2i-\beta s^2)}
 =\frac{(-2i+\xi_0\beta)s^{-1}+\beta }{-2(1+2i)s^{-1}+2\beta }\to \frac12,
$$
showing the first limit in \eqref{m11b}.
Moreover,
$$
  m_{11}'(s) = 
  -\frac12\frac{\beta(1+2is-s^2\beta) + (2i-2s\beta)(2i-(\xi_0+s)\beta)
  }
  {q(s)^2}
  =
  -\frac12\frac{
  \beta-4  -2i\beta\xi_0
  +2s\beta\Bigl(\xi_0\beta
  -2i\Bigr)
  +s^2\beta^2  
  }
  {q(s)^2}
$$
which similarly implies the second limit in \eqref{m11b}. 
For \eqref{m11a} we have by
(\ref{m11}),
 \begin{align*}
  \Im m_{11}(s) &= \frac{\Im\left[(2i-(\xi_0+s)\beta)
  (1-2is-s^2\beta^*)
  \right]}{2|q(s)|^2}
 =
  \frac{(\xi_0+s)(2s-2s^2\xi_0) +2(\eta_0^2-s\xi_0)(1-s^2)}{2|q(s)|^2}\\
  &=\frac{\eta_0^2+s\left[(\xi_0+s)(1-s\xi_0) -\eta_0^2s-\xi_0(1-s^2)\right]}{|q(s)|^2}
  = \frac{\eta_0^2}{|q(s)|^2}>0.
\end{align*}
To show \eqref{m11d} with $D_0 = \min(1/2,1-\bar\xi_1)/\sqrt{2}q_1$
we observe that, 
\begin{align*}
|m_{11}(\xi_0+\theta)|&=\frac{|2i-(\theta+2\xi_0)(1+2i\xi_0)|}{|q(\theta+\xi_0)|}
\geq 
\frac{|\theta+2\xi_0|+2|1-(\theta+2\xi_0)\xi_0|}{\sqrt{2}q_1(1+\theta^2)}
\\
&\geq\frac{
|\theta+2\xi_0|+
1-|(\theta+2\xi_0)\xi_0|
}{\sqrt{2}q_1(1+\theta^2)}
=\frac{
1+
(1-\xi_0)|\theta+2\xi_0|
}{\sqrt{2}q_1(1+\theta^2)}
\geq\frac{
1-2(1-\xi_0)\xi_0
+(1-\bar\xi_1)|\theta|
}{\sqrt{2}q_1(1+\theta^2)}
\geq\frac{
D_0
}{1+|\theta|},
\end{align*}
where we used Lemma~\ref{qest} as well as the facts that $\sqrt{2}|z|\geq |\Re z|+|\Im z|$ and 
$1-2x(1-x)\geq 1/2$ for $0\leq x\leq 1$.

For \eqref{m11c} we note that
if $r_k$ and $p$ are any polynomials of degrees $\ell_k$ and $\ell$, then
$$
  \frac{d}{d\theta}\left(\frac{r_k}{p^k}\right)
  = 
  \frac{r_k'p  -r_kp'}{p^{k+1}}=:
  \frac{r_{k+1}}{p^{k+1}},
$$
where the degree of $r_{k+1}$ is
 $\ell_{k+1}=\ell_k+\ell-1$. By induction,
$$
  \frac{d^n}{d\theta^n}\left(\frac{r_1}{p}\right) = \frac{r_{n+1}}
  {p^{n+1}},
$$
and $r_{n+1}$ has degree 
 $\ell_{n+1} = \ell_1+n(\ell-1)$.
Since $m_{11}$ is the quotient of the 
first order polynomial $r_1(s)=(2i-\xi_0\beta - \beta s)/2$,
and the second order polynomial $q(s)$,
its $n$-th derivative, is 
$$
m^{(n)}_{11}(\xi_0+\theta)=\frac{r_{n+1}(\xi_0+\theta)
  }{q(\xi_0+\theta)^{n+1}}
$$
and $r_{n+1}$ is of degree $n+1$.
Using Lemma~\ref{qest} 
and \eqref{xi0deltabound}
we then obtain the required estimate,
$$
|m^{(n)}_{11}(\xi_0+\theta)|
  \leq \frac{C\left(1+|\xi_0+\theta|^{n+1}\right)
  }{(q_0(1+\theta^2))^{n+1}}
  \leq \frac{C\left(1+2^n(|\xi_0|^{n+1}+|\theta|^{n+1})\right)
  }{(q_0(1+\theta^2))^{n+1}}
  \leq \frac{d_n}{1+|\theta|^{n+1}},
$$
where $d_n$ is independent of $\xi_0$.
\end{proof}

We are now ready to estimate the full phases $\phi_a$ and $\phi_g$.
\begin{lem}\label{phiest} 
Let $\phi$ be either $\phi_a$ or $\phi_g$. 
Suppose $|\delta|\leq 1$.
Then
there are constants $c_0$ and $C_n$,
independent
of $\eta$,
$\theta$, $\xi_0$ and $\delta$ such that
\begin{align}
|\phi_\theta(\delta,\theta,\eta)|&\geq \frac{\theta^2}{16},\quad 
\text{when $|\theta|\geq c_0\left(1+|\eta|^{1/2}\right)$},\label{phiestc}\\
|\partial_\theta^n\phi(\delta,\theta,\eta)| &\leq 
C_n \begin{cases}
|\theta|^{2}+\delta+|\eta|, & n=1, \\
|\theta|+\delta, & n=2, \\
\frac{1}{1+|\theta|^{n-3}}, & n\geq 3.
\end{cases}\label{phiestd}
\end{align}
Additionally,
\begin{equation}\label{phiesta}
\Im \phi(\delta,\theta,\eta) \geq 0. 
\end{equation}
For $\phi_a$ we have 
$C_n=0$ when $n\geq 4$.
\end{lem}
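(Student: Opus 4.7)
The plan is to split the argument into two natural parts, one for the real cubic phase $\phi_a$ and one for $\phi_g = \phi_a + \tfrac12 m_{11}(\xi_0+\theta)(\theta^2-\delta)^2$, and within each part to handle (a), (b), (c) separately.

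For $\phi_a$ everything is elementary because $\phi_a$ is a real polynomial, cubic in $\theta$. Its derivatives are $\partial_\theta\phi_a = -\theta^2+\delta-2\eta_0\eta$, $\partial_\theta^2\phi_a = -2\theta$, $\partial_\theta^3\phi_a = -2$, and zero for $n\geq 4$, so (b) and the statement that $C_n=0$ for $n\geq 4$ are immediate. Since all parameters are real, $\Im\phi_a\equiv 0$, giving (c). For (a) I would estimate $|\partial_\theta\phi_a|\geq \theta^2 - |\delta| - 2|\eta_0||\eta|\geq \theta^2-1-2|\eta|$ by the triangle inequality; using $|\theta|\geq c_0(1+|\eta|^{1/2})$, the RHS is $\geq \tfrac{15}{16}c_0^2(1+|\eta|)-1-2|\eta|\geq \theta^2/16$ once $c_0$ is chosen large enough (e.g.\ $c_0^2\geq 32/15$ suffices here).

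For $\phi_g$, (c) follows quickly: $\Im\phi_g = \tfrac12(\Im m_{11})(\theta^2-\delta)^2 \geq 0$ because $\Im m_{11}=\eta_0^2/|q|^2\geq 0$ by Lemma~\ref{m11est} and $(\theta^2-\delta)^2$ is real. For (b) I would apply Leibniz to $\partial_\theta^n\bigl[m_{11}(\xi_0+\theta)(\theta^2-\delta)^2\bigr]$; since $(\theta^2-\delta)^2$ has vanishing derivatives beyond order $4$, only finitely many terms contribute, and combining the decay bounds $|m_{11}^{(k)}(\xi_0+\theta)|\leq d_k/(1+|\theta|^{k+1})$ from Lemma~\ref{m11est} with the obvious polynomial bounds on $r^{(j)}=\partial_\theta^j(\theta^2-\delta)^2$ (using $|\delta|\leq 1$ and $\delta\geq 0$ in our regime, so $\delta^2\leq \delta$) produces each of the stated estimates after routine bookkeeping.

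The main obstacle is (a) for $\phi_g$, because the naive triangle inequality fails: the $-\theta^2$ in $\phi_a'$ is partially cancelled by a $\theta^2$-contribution from $(2\theta m_{11}(\theta^2-\delta))$. I plan to uncover this cancellation algebraically via the identity
\begin{equation*}
2s\,m_{11}(s) \;=\; 1 - \frac{1}{q(s)},
\end{equation*}
which follows immediately from $s(2i-s\beta)=q(s)-1$. Substituting into $\phi_g'$ and regrouping yields
\begin{equation*}
\phi_g' \;=\; -2\eta_0\eta \;-\; \frac{\xi_0(\theta^2-\delta)}{\xi_0+\theta} \;-\; \frac{\theta(\theta^2-\delta)}{(\xi_0+\theta)\,q(\xi_0+\theta)} \;+\; \tfrac12 m_{11}'(\xi_0+\theta)(\theta^2-\delta)^2,
\end{equation*}
in which the $-\theta^2+\delta$ of $\phi_a'$ has disappeared. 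Next I would differentiate the identity to get $m_{11}'(s) = -1/(2s^2) + R(s)$ with $|R(s)|\leq C/(1+|s|^4)$, showing that the last term supplies the dominant contribution $-(\theta^2-\delta)^2/(4(\xi_0+\theta)^2)$, whose modulus is $\tfrac{\theta^2}{4}(1+O(1/|\theta|))$ by a Taylor expansion of $(1-\delta/\theta^2)^2/(1+\xi_0/\theta)^2$. The remaining three terms are bounded by $2|\eta|+O(|\theta|)+O(1)$, and under $|\theta|\geq c_0(1+|\eta|^{1/2})$ we have $|\eta|\leq \theta^2/c_0^2$, so these errors are at most $\theta^2/32$ once $c_0$ is chosen sufficiently large (depending only on $\bar\xi_0,\bar\xi_1,q_0$). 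Combining the two gives $|\phi_g'|\geq \tfrac{\theta^2}{8}-\tfrac{\theta^2}{32}\geq \tfrac{\theta^2}{16}$, completing (a).
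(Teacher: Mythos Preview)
Your treatment of \eqref{phiesta}, \eqref{phiestd}, and \eqref{phiestc} for $\phi_a$ matches the paper's proof. The divergence is in \eqref{phiestc} for $\phi_g$, where you take a genuinely different and more laborious route than the paper. You diagnose the obstacle as ``the naive triangle inequality fails,'' but in fact it \emph{does not} fail: the paper simply observes that with $w:=\phi_g-\phi_a$ one has
\[
\lim_{\theta\to\pm\infty}\frac{w_\theta(\delta,\theta)}{\theta^2}
=\lim_{\theta\to\pm\infty}\Bigl(\tfrac12 m_{11}'(\xi_0+\theta)\theta^2+2\theta\,m_{11}(\xi_0+\theta)\Bigr)=-\tfrac14+1=\tfrac34,
\]
directly from the limits \eqref{m11b} in Lemma~\ref{m11est}. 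Hence $|w_\theta|\le\tfrac78\theta^2$ for $|\theta|\ge K$ (uniformly in $\xi_0,\delta$), and then $|\phi_g'|\ge|\phi_a'|-|w_\theta|\ge(1-2/c_0^2)\theta^2-\tfrac78\theta^2=(\tfrac18-2/c_0^2)\theta^2\ge\theta^2/16$ once $c_0\ge\max(K,\sqrt{32})$. No algebraic rewriting of $\phi_g'$ is needed; the two $\theta^2$-contributions you worry about combine to a coefficient strictly below $1$ in modulus, so the triangle inequality survives.

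Your algebraic route is salvageable but contains errors. The identity $2s\,m_{11}(s)=1-1/q(s)$ is false: since $2q(s)m_{11}(s)=2i-(\xi_0+s)\beta$, one has $2s\,m_{11}(s)=1-\dfrac{1+s\xi_0\beta}{q(s)}$, so your third term in the displayed formula for $\phi_g'$ is missing a factor $(1+(\xi_0+\theta)\xi_0\beta)$. Also, writing $m_{11}'(s)=-\tfrac{1}{2s^2}+R(s)$ gives $R(s)=O(|s|^{-3})$, not $O(|s|^{-4})$ (the next term in the expansion carries a $\xi_0$). Neither error is fatal---the corrected third term is still $O(|\theta|)$ and $R(s)(\theta^2-\delta)^2=O(|\theta|)$---so your strategy does go through after repair and even yields the sharper leading constant $\tfrac14$. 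But the paper's limit argument gets there in two lines.
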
 
\begin{proof}

We first prove the statements for 
$\phi=\phi_a$. 
Suppose $|\theta|\geq c_0(1+|\eta|)^{1/2}$.
Then $|\eta| \leq\theta^2/c_0-1$ and by \eqref{xi0deltabound}
$$
|\partial_\theta\phi_a(\delta,\theta,\eta)| = 
|\theta^2+2\eta\eta_0-\delta|\geq \theta^2 - 2|\eta|-1\geq
\theta^2 - 2(\theta^2/c_0^2-1)-1\geq 
(1-2/c_0^2)\theta^2,
$$
which gives \eqref{phiestc} when $c_0\geq \sqrt{32/15}$.
Similarly,
$$
|\partial_\theta\phi_a(\delta,\theta,\eta)| \leq
\theta^2 + 2|\eta|+\delta\leq
2(\theta^2 + |\eta|+\delta),
$$
showing \eqref{phiestd} for $n=1$. The bounds for larger $n$ follow easily from
an explicit calculation, yielding $C_n= 2$ for $1\leq n\leq 3$
and $C_n=0$ for $n\geq 4$.

To prove the statements for $\phi_g$ we
denote the correction term by $w(\delta,\theta)=\phi_g-\phi_a$.
Lemma~\ref{m11est} gives
\begin{align*}
\lim_{\theta\to\pm\infty} \frac{ w_\theta(\delta,\theta)}{\theta^2}
&=
\lim_{\theta\to\pm\infty}\frac{1}{\theta^2}\left(
\frac12m'_{11}(\xi_0+\theta)(\theta^2-\delta)^2
  +2m_{11}(\xi_0+\theta)(\theta^2-\delta)\theta\right)
  = -\frac14+1=\frac34.
\end{align*}
Consequently, there is a $K$ 
such that 
$|w_\theta(\delta,\theta)|\leq 7\theta^2/8$ for all $|\theta|\geq K$,
uniformly in $\xi_0$ and $\delta$ thanks to
\eqref{xi0deltabound}. 
We now take $c_0=\max(K,\sqrt{32})$. Then for 
$|\theta|\geq c_0(1+|\eta|^{1/2})\geq K$, we have
$$
|\partial_\theta\phi_g(\delta,\theta,\eta)|
\geq |\partial_\theta\phi_a(\delta,\theta,\eta)|-|\partial_\theta w(\delta,\theta)|
\geq 
(1-2/c_0^2)\theta^2-\frac78\theta^2
=(1/8-2/c_0^2)\theta^2\geq \frac1{16}\theta^2,
$$
and \eqref{phiestc} is proved.

For \eqref{phiestd} we use \eqref{m11c} in Lemma~\ref{m11est}.
When $n=1$ we have, as above,
\begin{align*}
|\partial_\theta\phi_g(\delta,\theta,\eta)|
&\leq
  \theta^2 +|\delta-2\eta_0\eta|
  +\frac12|m'_{11}(\xi_0+\theta)|(\theta^2-\delta)^2
  +2|m_{11}(\xi_0+\theta)|\theta(\theta^2-\delta)
\\
&\leq
  \theta^2 +|\delta-2\eta_0\eta|
  +\frac{d_1}{2}\frac{|\theta|^4+\delta^2}{1+\theta^2}
  +2d_0\frac{|\theta|^3+\delta|\theta|}{1+|\theta|}
\leq \left(1+\frac12d_1+2d_0\right)(\theta^2 + \delta) +2\eta_0|\eta|,
\end{align*}
which shows the result for $n=1$ with $C_1=\max(1+d_1/2+2d_0,2)$.
For $n=2$ we get
\begin{align*}
  |\partial_{\theta\theta}\phi_{g}(\delta,\theta,\eta)| &= 
  \left|-2\theta
  +\frac12m_{11}^{(2)}(\xi_0+\theta)(\theta^2-\delta)^2
  + 4m_{11}^{(1)}(\xi_0+\theta)(\theta^2-\delta)\theta
  +2m_{11}(\xi_0+\theta)(3\theta^2-\delta)
  \right|\\
&\leq
  2|\theta|
  +\frac{d_2}{2}\frac{|\theta|^4+\delta^2}{1+|\theta|^3}
  + 4d_1\frac{|\theta|^3+\delta|\theta|}{1+\theta^2}
  +2d_0\frac{3\theta^2+\delta}{1+|\theta|} \\
&\leq \left(2+\frac12d_2+4d_1+6d_0\right)(|\theta|+\delta)
=: C_2(|\theta|+\delta).
\end{align*}
For $n\geq 3$,
\begin{align*}
  |\partial^n_\theta w(\delta,\theta)| &= 
  \frac12\left|\sum_{\ell=0}^n\Vector{n\\ \ell}
  m_{11}^{(n-\ell)}(\xi_0+\theta)
  \frac{d^\ell}{d\theta^\ell}(\theta^2-\delta)^2
  \right|=
  \frac12\left|\sum_{\ell=0}^{\max(4,n)}\Vector{n\\ \ell}
  m_{11}^{(n-\ell)}(\xi_0+\theta)
  \frac{d^\ell}{d\theta^\ell}(\theta^2-\delta)^2
  \right|\\
&\leq
  C\sum_{\ell=0}^{\max(4,n)}\Vector{n\\ \ell}
  \frac{1}{1+|\theta|^{n-\ell+1}}(1+|\theta|^{4-\ell})
\leq \frac{C_n}{1+|\theta|^{n-3}},
\end{align*}
which 
shows the result for $n\geq 3$
 as $\partial^3_\theta\phi_g=-1+\partial^3_\theta w$
and $\partial^n_\theta\phi_g=\partial^n_\theta w$ for $n\geq 4$.

Finally, statement \eqref{phiesta} for $\phi_a$ is trivial, as $\Im\phi_a=0$,
and for $\phi_g$
it follows 
from \eqref{m11a} in Lemma~\ref{m11est} and \eqref{xi0deltabound},
since
\begin{align*}
  \Im \phi_g(\delta,\theta,\eta)&=
\frac{1}{2}\Im m_{11}(\xi_0+\theta)(\theta^2-\delta)^2=
\frac{(\theta^2-\delta)^2\eta_0^2}
{2|q(\xi_0+\theta)|^2}\geq 0.
\end{align*}
This concludes the proof.
\end{proof}

In the final part of this section
we consider a space of function that
is used in Lemma~\ref{nonstatphase}.
For a fixed phase function $\phi$ and order $p$ we
first introduce the basis functions
\begin{equation}\label{Wdef}
  {\mathcal W}_p(\phi)=\left\{
  \prod_{k}\frac{\phi^{(\alpha_k+1)}}{\phi'}\ : \ 
  \sum_{k}\alpha_k=p,\ \alpha_k\geq 1,\right\},
\end{equation}
when $p\geq 1$ and let ${\mathcal W}_0(\phi)$ be the constant function
equal to one.
Second, we denote by  ${\mathcal U}_p(\phi)$ the linear span of these functions
over the complex numbers,
\begin{equation}\label{Udef}
  {\mathcal U}_p(\phi) = \operatorname{span}_{\Cnumbers}  {\mathcal W}_p(\phi).
\end{equation}
Functions in ${\mathcal U}_p(\phi)$ appear in Lemma~\ref{nonstatphase}.
Here we show that
when $\phi$ is 
either $\phi_a$ or $\phi_g$,
these functions are bounded on subsets where the phase gradient
grows at least quadratically.
\begin{lem}\label{Uest} 
Let $K=\{ \theta\in\Real\ |\ r_0\leq |\theta| \leq r_1 \}$,
with $0<R_0\leq r_0<r_1$ and,
for some $c>0$, 
$$
  |\phi_\theta(\delta,\theta,\eta)|\geq c \theta^2,
  \qquad \text{for all $\theta\in K$ and $|\delta|\leq 1$},
$$
where $c$ and $R_0$ are independent of $\delta$ and $\eta$.
Then 
for each $u\in {\mathcal U}_p(\phi(\delta,\cdot,\eta))$,
where $\phi$ is either $\phi_a$ or $\phi_g$,
there is a uniform bound 
$$
  |u(\theta)|\leq C,
  \qquad \forall \theta\in K.
$$
The constant $C$ 
depends on 
$c$ and $R_0$ 
but
is independent of $r_0$, $r_1$, $\delta$ and $\eta$.

\end{lem}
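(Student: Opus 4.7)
The plan is to reduce the problem to bounding the basis functions in $\mathcal{W}_p(\phi)$, since any $u\in\mathcal{U}_p(\phi)$ is a finite complex linear combination of these basis functions and the resulting bound constant can absorb the combination coefficients. Thus it suffices to show that, for every multi-index $(\alpha_1,\ldots,\alpha_m)$ with $\alpha_k\geq 1$ and $\sum_k\alpha_k=p$, the product
\[
  \prod_{k=1}^{m}\frac{\phi^{(\alpha_k+1)}(\delta,\theta,\eta)}{\phi_\theta(\delta,\theta,\eta)}
\]
is uniformly bounded on $K$ by a constant depending only on $c$, $R_0$ and $p$.

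Next I will bound each factor separately. The hypothesis gives $|\phi_\theta|\geq c\theta^2$ on $K$, so in every factor the denominator is at least $cR_0^2$. The crucial observation is that every numerator has derivative order $\alpha_k+1\geq 2$, so Lemma~\ref{phiest} provides useful bounds. Concretely, for $\alpha_k=1$ I will use $|\phi_{\theta\theta}|\leq C_2(|\theta|+\delta)$ together with $|\delta|\leq 1$ and $|\theta|\geq R_0$ to conclude
\[
   \left|\frac{\phi_{\theta\theta}}{\phi_\theta}\right|\leq \frac{C_2(|\theta|+1)}{c\theta^2}\leq \frac{C_2}{c}\left(\frac{1}{R_0}+\frac{1}{R_0^2}\right).
\]
For $\alpha_k\geq 2$ (so the derivative order is at least $3$), I will use the estimate $|\partial_\theta^{\alpha_k+1}\phi|\leq C_{\alpha_k+1}/(1+|\theta|^{\alpha_k-2})$, which together with $|\phi_\theta|\geq c\theta^2\geq cR_0^2$ yields
\[
   \left|\frac{\phi^{(\alpha_k+1)}}{\phi_\theta}\right|\leq \frac{C_{\alpha_k+1}}{cR_0^2}.
\]
All these bounds are independent of $r_0$, $r_1$, $\delta$ and $\eta$, as required.

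Finally, since $\sum_k\alpha_k=p$ forces $m\leq p$, the product has at most $p$ factors, each bounded by a finite constant as above; taking the product of these constants yields a uniform bound on each basis function. Summing over the finitely many elements of $\mathcal{W}_p(\phi)$ with appropriate coefficients for a given $u\in\mathcal{U}_p(\phi)$ gives the claim. The only real point requiring care is ensuring the denominator $|\theta|^2$ dominates the numerator for every admissible $\alpha_k$: the condition $\alpha_k\geq 1$ (equivalently, derivative order at least $2$) is exactly what makes the estimates of Lemma~\ref{phiest} strong enough to compensate for the quadratic growth lower bound on $|\phi_\theta|$. Once that structural observation is in place, the estimate is essentially a bookkeeping exercise in combining the bounds from Lemma~\ref{phiest} with the lower bound $|\theta|\geq R_0$.
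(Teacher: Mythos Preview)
Your proof is correct and follows essentially the same approach as the paper's: reduce to basis functions in $\mathcal{W}_p(\phi)$, then bound each factor $\phi^{(\alpha_k+1)}/\phi_\theta$ using Lemma~\ref{phiest} for the numerator and the hypothesis $|\phi_\theta|\geq c\theta^2$ together with $|\theta|\geq R_0$ for the denominator. The only cosmetic difference is that the paper groups the factors with $\alpha_k=1$ separately and writes the product bound in a single inequality, whereas you bound each factor individually before multiplying; the resulting constants differ in form but both depend only on $c$, $R_0$ and $p$, as required.
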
 
\begin{proof}
We get from 
Lemma~\ref{phiest},
$$
|\phi_{\theta\theta}(\delta,\theta,\eta)|
\leq C_2 (|\theta|+\delta)
\leq C_2(|\theta|+1)
\leq C_2(1+1/R_0)|\theta|,
\qquad \forall \theta\in K,
$$
while for $n\geq 3$ we have
$$
|\partial_\theta^n\phi(\delta,\theta,\eta)|
\leq 
\frac{C_n}{1+|\theta|^{n-3}}
\leq C_n,
\qquad \forall \theta\in \Real.
$$
The constants $C_2$ and $C_n$ are independent of
$\delta$ and $\eta$.
Consider next $w\in {\mathcal W}_p(\phi(\delta,\cdot,\eta))$ and assume it has $M\geq 1$ factors
and, without loss of generality, that the first $M'\leq M$ factors
have $\alpha_k=1$.
Then, for all
$\theta\in K$,
\begin{align*}
|w(\theta)| &=
\prod_{k=1}^{M'}
\frac{|\phi_{\theta\theta}(\delta,\theta,\eta)|}
           {|\phi_\theta(\delta,\theta,\eta)|}
\prod_{k=M'+1}^M
\frac{|\partial_\theta^{\alpha_k+1}\phi(\delta,\theta,\eta)|}
           {|\phi_\theta(\delta,\theta,\eta)|}
\leq 
\prod_{k=1}^{M'}
\frac{
C_2(1+1/R_0)|\theta|}
           {c\theta^2}
\prod_{k=M'+1}^M
\frac{C_{\alpha_k+1}}
           {|c\theta^2|}
           \\
&\leq
\left(\max_{2\leq j \leq p+1} C_j\right)^M
 \frac{(1+1/R_0)^{M'}}{c^M|\theta|^{2M-M'}} 
\leq C \frac{(1+1/R_0)^{M'}}{R_0^{2M-M'}} 
=: \tilde{C},
\end{align*}
where $\tilde{C}$ is independent of $\delta$, $\eta$, $r_0$ and $r_1$,
but depends on $c$ and $R_0$.
Since $u\in {\mathcal U}_p(\phi(\delta,\cdot,\eta))$
is a linear combination of functions in ${\mathcal W}_{p}
(\phi(\delta,\cdot,\eta))$
the same bound holds true for $u$ on $K$.
\end{proof}

\section{Estimates of oscillatory integrals}
 
We consider integrals of the type
$$
k^{1/3}
 \int\theta^p
 r(\theta)
e^{ik\phi(\delta,\theta,\eta)}
d\theta,
$$
where $\phi$ is either $\phi_a$ or $\phi_g$
and $r\in W^{n,\infty}(\Real)$, whose norm is defined by
$$
  ||r||_{W^{n,\infty}(\Real)} = \sum_{j=0}^n \left\|\frac{d^jr}{d\theta^j}\right\|_{L^\infty(\Real)}
$$
In general the integrand is then not in $L^1(\R)$ and the 
integral must be defined in a generalized sense as an
oscillatory integral. In this section, however, we
only estimate the integral over bounded intervals that
are defined using a smooth cutoff function
$\psi\in C_c^\infty(\Real)$, which
takes values in $[0,1]$, is equal to one on $[-1,1]$ and has supp$(\psi)\subset[-2,2]$. 
This leaves us with integrals over compact domains with
smooth integrands. Our main tool
for estimating them are the identities stated in Lemma~\ref{nonstatphase}.
They are used to rewrite the integral on the domain where
$\phi_\theta\neq 0$. 
Lemma~\ref{phiest} in the previous
section tells us when this is true.
Lemma~\ref{nonstatphase} 
uses the space of functions ${\mathcal U}_{p}$ defined 
in (\ref{Udef}). Functions in ${\mathcal U}_{p}$
are bounded on the domains we consider here, which is proved in
Lemma~\ref{Uest}. 
Together, Lemma~\ref{nonstatphase} and Lemma~\ref{Uest}
constitute a precise version of the non-stationary
phase lemma.

We will also make use of the simple inequalities
\begin{equation}
  ||uv||_{W^{n,\infty}(\Real)}\leq C_n
  \sum_{j=0}^n 
    \sum_{k=0}^j \left\|\frac{d^ku}{d\theta^k}\right\|_{L^\infty(\Real)}
    \left\|
    \frac{d^{j-k}v}{d\theta^{j-k}}\right\|_{L^\infty(\Real)}
    \leq C_n
  ||u||_{W^{n,\infty}(\Real)}
  ||v||_{W^{n,\infty}(\Real)},
  \label{Wnineq1}
\end{equation}
for all $u,v\in W^{n,\infty}(\Real)$,
and
\begin{equation}
  ||u(\cdot/\sigma)||_{W^{n,\infty}(\Real)}=
  \sum_{j=0}^n \sigma^{-j}\left\|\frac{d^ju}{d\theta^j}\right\|_{L^\infty(\Real)}
  \leq   \max(1,\sigma^{-n})||u||_{W^{n,\infty}(\Real)}, \qquad
  \forall u\in W^{n,\infty}(\Real),\ \sigma\neq 0.
  \label{Wnineq2}
\end{equation}

We start with an estimate of the integral
between $R$ and $2t$ where $t$ is 
arbitrarily large. For this
we consider a smooth cutoff around
$\theta=R$ and $\theta=t>R$ and 
obtain bounds that are independent of $t$.

\begin{lem}\label{Itail} 
Let $\phi$ be either $\phi_a$ or $\phi_g$ and set
$$
I_t = k^{1/3}
 \int (1-\psi(\theta/R))\psi(\theta/t)\theta^p
 r(\theta)
e^{ik\phi(\delta,\theta,\eta)}
d\theta,
$$
where $c_0\leq R < t$ with $c_0$ as in Lemma~\ref{phiest},
$|\delta|\leq 1$
and $r\in W^{n,\infty}(\Real)$.
If $c_0(1+|\eta|^{1/2})\leq R$
and $n\geq 1+p/2$,
there is a constant $C_n$ independent of $k$, $R$, $\delta$, $\eta$ and $t$
such that
$$
 |I_t|\leq  
C_nk^{1/3-n}
  ||r||_{W^{n,\infty}(\Real)}.
$$
\end{lem}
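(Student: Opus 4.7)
The plan is to exploit the lower bound $|\phi_\theta(\delta,\theta,\eta)|\geq \theta^2/16$, which holds on the support $\{R\leq|\theta|\leq 2t\}$ of the integrand by Lemma~\ref{phiest}\eqref{phiestc} (since $R\geq c_0(1+|\eta|^{1/2})$), via $n$ iterations of the non-stationary phase identity of Lemma~\ref{nonstatphase}. The cutoffs $1-\psi(\cdot/R)$ and $\psi(\cdot/t)$ make the integrand smooth and compactly supported in $\theta$, so no boundary terms arise in the integration by parts.

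Writing $g(\theta):=(1-\psi(\theta/R))\psi(\theta/t)\theta^p r(\theta)$, the $n$-fold application of Lemma~\ref{nonstatphase} rewrites $I_t$ as
\[
I_t \;=\; k^{1/3-n}\sum_{j=0}^n\int \frac{u_j(\theta)}{\phi_\theta(\delta,\theta,\eta)^n}\,g^{(j)}(\theta)\,e^{ik\phi(\delta,\theta,\eta)}\,d\theta,
\]
where $u_j\in \mathcal{U}_{n-j}(\phi(\delta,\cdot,\eta))$ arises from the quotient rule. Since $R\geq c_0$ and $|\phi_\theta|\geq \theta^2/16$ on the support, Lemma~\ref{Uest} (applied with $c=1/16$ and $R_0=c_0$) bounds each $u_j$ uniformly by a constant depending only on $n$ and $c_0$. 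Combined with the pointwise bound $|\phi_\theta|^{-n}\leq 16^n|\theta|^{-2n}$, the task reduces to estimating $\int_{R\leq|\theta|\leq 2t}|g^{(j)}(\theta)|\,|\theta|^{-2n}\,d\theta$ for each $0\leq j\leq n$.

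I would then expand $g^{(j)}$ by Leibniz, using \eqref{Wnineq1}--\eqref{Wnineq2} on the rescaled cutoffs. Each derivative of $1-\psi(\cdot/R)$ contributes a factor $R^{-i_1}\leq c_0^{-i_1}$, each derivative of $\psi(\cdot/t)$ contributes $t^{-i_2}\leq c_0^{-i_2}$ (using $t\geq R\geq c_0$), $i_3\leq p$ derivatives on $\theta^p$ contribute $|\theta|^{p-i_3}$ (vanishing for $i_3>p$), and $r$ contributes $\|r\|_{W^{n,\infty}}$. This gives
\[
|g^{(j)}(\theta)|\leq C_n\,\|r\|_{W^{n,\infty}}\,|\theta|^{p-i_3},\qquad R\leq|\theta|\leq 2t,
\]
with $C_n$ depending only on $n$ and $c_0$. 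It remains to estimate $\int_R^{2t}|\theta|^{p-i_3-2n}\,d\theta$. The hypothesis $n\geq 1+p/2$ gives $2n\geq p+2$, hence $p-i_3-2n+1\leq -1$, so this integral is bounded by $(2n+i_3-p-1)^{-1}R^{p-i_3-2n+1}$, which is itself uniformly bounded in $R\geq c_0$ and $t>R$ because the exponent is negative.

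Collecting the bounds yields $|I_t|\leq C_n k^{1/3-n}\|r\|_{W^{n,\infty}}$ with $C_n$ independent of $k,R,\delta,\eta,t$, as claimed. The main obstacle lies in verifying that iteration of the non-stationary phase operator $\tilde L f:=-\partial_\theta(f/(ik\phi_\theta))$ indeed organizes cleanly into the announced $\mathcal{U}_{n-j}$-valued factors times $g^{(j)}/\phi_\theta^n$; this structural identity is precisely the content of Lemma~\ref{nonstatphase} and is why the class $\mathcal{U}_p$ was defined as in \eqref{Udef}. Once that structure is in hand, everything else is a routine term-by-term estimate using the quadratic lower bound on $|\phi_\theta|$ and the polynomial decay $|\theta|^{-2n}$.
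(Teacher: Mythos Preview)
Your proposal is correct and follows essentially the same approach as the paper: apply the non-stationary phase identity of Lemma~\ref{nonstatphase} to $a(\theta)=(1-\psi(\theta/R))\psi(\theta/t)\theta^p r(\theta)$, bound the $\mathcal{U}_{n-j}$ factors via Lemma~\ref{Uest} on $K=\{R\leq|\theta|\leq 2t\}$, expand the derivatives of $a$ by Leibniz, and use $2n\geq p+2$ so that $\int_R^\infty |\theta|^{p-i_3-2n}\,d\theta$ converges uniformly in $R\geq c_0$. The paper organizes the Leibniz step slightly differently (grouping the cutoffs with $r$ into $b$ and then splitting $b\theta^p$), but the estimates are identical.
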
 
\begin{proof}
On this domain the results
in Section~\ref{sec:ampphase}
 show that the phase gradient
does not vanish, and $|\phi_\theta|\geq c\theta^2$. Since the
integrand is smooth and compactly supported we can therefore
use the non-stationary phase lemma to estimate the integral.
For sufficiently regular $r$
the repeated partial integrations in this lemma
enables us to offset the growing $\theta^p$ factor and obtain a
bound that is independent of $t$.

To be precise, let 
$$
  b(\theta) = 
(1-\psi(\theta/R))
\psi(\theta/t)
 r(\theta),
 $$
 which is supported in the compact set
 $K=\{ \theta\in\Real\ |\ R\leq |\theta| \leq 2t \}$.
 Then 
 by Lemma~\ref{phiest} we have 
$|\phi_\theta(\delta,\theta,\eta)|\geq \theta^2/16$
on $K$, independent of $\delta$, $\eta$ and $t$,
 since $|\theta|
\geq R>c_0(1+|\eta|^{1/2})$.
We apply Lemma~\ref{nonstatphase} with 
 $a(\theta)=b(\theta)\theta^p$  
 and let $D$ be any bounded open set containing $K$.
This gives
 \begin{align}
 I_t&=k^{1/3} \int_{D} b(\theta)\theta^p 
 e^{ik\phi(\delta,\theta,\eta)}
d\theta  \nonumber\\
&= 
  k^{1/3}(ik)^{-n}
\sum_{\ell=0}^n 
\int_{K} \left(\frac{d^\ell}{d\theta^\ell}b(\theta)\theta^p \right)
  \frac{u_{\ell,n}(\theta)}{{\phi_\theta(\delta,\theta,\eta)}^{n}}
   e^{ik\phi(\delta,\theta,\eta)}
d\theta,\qquad
u_{\ell,n} \in {\mathcal U}_{n-\ell}(\phi(\delta, \cdot, \eta)).\label{Inonstat1}
\end{align}
where the space ${\mathcal U}_{p}$ is defined in \eqref{Udef}.
Since $K$ satisifies all conditions in
Lemma~\ref{Uest} and $R\geq c_0>0$
we obtain a uniform bound, 
$$
  |u_{\ell,n}(\theta)|\leq C_{\ell,n},\qquad \forall \theta\in K,
$$
where $C_{\ell,n}$ depends on $c_0$, but
is
independent of
$\delta$, $\eta$, $R$ and $t$.
This allows us to estimate $I_t$ as
\begin{align*}
 |I_t| &\leq  
  Ck^{1/3-n}
\sum_{\ell=0}^n 
\sum_{j=0}^{\min(\ell,p)}
\int_{K}  |b^{(\ell-j)}(\theta)|
  \frac{|\theta|^{p-j}}{|\theta|^{2n}}d\theta
\leq
  Ck^{1/3-n}
\sum_{\ell=0}^n 
\sum_{j=0}^{\min(\ell,p)}
  ||b^{(\ell-j)}||_{L^\infty(\Real)}
\int_{R}^{\infty} \frac{d\theta}{
|\theta|^{2n-p+j}} 
\\
&\leq
Ck^{1/3-n}
  \sum_{\ell=0}^n 
||b^{(\ell)}||_{L^\infty(\Real)}
  \sum_{j=0}^p
  \frac{1}{R^{2n-p+j+1}}
\leq  C\frac{k^{1/3-n}}{c_0^{2n-p+1}}
  ||b||_{W^{n,\infty}(\Real)},
\end{align*}
where we also used the fact that $2(n-1)\geq p$ and $c_0\leq R\leq t$.
Moreover, by \eq{Wnineq1} and \eq{Wnineq2},
$$
||b||_{W^{n,\infty}(\Real)}
\leq C_n^2\max(1,c_0^{-n})^2
||1-\psi||_{W^{n,\infty}(\Real)}
||\psi||_{W^{n,\infty}(\Real)}
||r||_{W^{n,\infty}(\Real)}
\leq C
||r||_{W^{n,\infty}(\Real)}
$$
The result in the lemma follows.
\end{proof}

Next we consider the main part of the integral
for small $\eta$ and $\delta$ with the Airy phase.
The estimate involves the norm of $r$ with an argument scaled
by $k^{1/3}$. 

\begin{lem}\label{Imainpartairy} 
Let
$$
I=k^{1/3}
 \int\psi(\theta/R)\theta^p
 r(\theta)
e^{ik\phi_a(\delta,\theta,\eta)}
d\theta,
$$
where $0<R_0\leq R\leq R_1$
and $r\in W^{n,\infty}(\Real)$
with $n=\lceil (p+1)/5\rceil$.
Moreover, suppose 
\begin{equation}\label{etadeltasmall}
2|\eta|+|\delta|\leq \frac{\xi_0^2}2k^{-2/3}, \qquad k\geq 1.
\end{equation}
Then there is a constant $C$, depending on $R_0$ and $R_1$, but independent of $k$, $R$, $\eta$ and $\delta$
such that
$$
 |I|\leq  Ck^{-p/3}||\tilde{r}_k||_{W^{n,\infty}(\Real)}, \qquad
 \tilde{r}_k(\zeta) := r\left(k^{-1/3}\zeta\right),
$$
\end{lem}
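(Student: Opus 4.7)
The plan is to rescale to the natural Airy length scale, decompose the resulting oscillatory integral into a piece concentrated near the stationary points and a piece away from them, and then combine a Taylor expansion with non-stationary phase, using the algebraic structure of $\phi_a$.

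First, I would substitute $\theta = k^{-1/3}\zeta$, so that
\[
k\,\phi_a(\delta,\theta,\eta) = -\tfrac{1}{3}\zeta^3 + \mu\zeta, \qquad \mu := k^{2/3}(\delta - 2\eta_0\eta),
\]
and hypothesis \eqref{etadeltasmall} gives the $k$-uniform bound $|\mu| \leq \xi_0^2/2 < 1/2$. Absorbing the Jacobian and writing $\theta^p = k^{-p/3}\zeta^p$ turns the claim into the uniform-in-$k$ estimate $|J| \leq C\|\tilde r_k\|_{W^{n,\infty}}$ for
\[
J := \int \psi(k^{-1/3}\zeta/R)\,\zeta^p\,\tilde r_k(\zeta)\,e^{i\varphi(\zeta)}\,d\zeta, \qquad \varphi(\zeta) = -\tfrac13\zeta^3 + \mu\zeta,
\]
where the stationary points $\zeta = \pm\sqrt{\mu}$ all lie in $|\zeta|\leq 1$.

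Next, I would introduce a fixed smooth cutoff $\chi(\zeta) := \psi(\zeta/R_*)$ at a scale $R_*\geq 1$ chosen so that $|\varphi'(\zeta)| = |\zeta^2 - \mu| \geq \zeta^2/2$ on $|\zeta|\geq R_*$, and split $J = J_{\rm near}+J_{\rm far}$. For $J_{\rm near}$, whose support $|\zeta|\leq 2R_*$ is $k$-independent, Taylor-expand
\[
\tilde r_k(\zeta) = \sum_{j=0}^{n-1}\frac{\tilde r_k^{(j)}(0)}{j!}\zeta^j + \tilde R_n(\zeta), \qquad |\tilde R_n(\zeta)| \leq \frac{|\zeta|^n}{n!}\|\tilde r_k^{(n)}\|_{L^\infty}.
\]
Each polynomial term produces a truncated Airy-type integral $\int \chi(\zeta)\zeta^{p+j}e^{i\varphi(\zeta)}d\zeta$ whose modulus is bounded by $2\pi|\mathrm{Ai}^{(p+j)}(-\mu)|$ plus a cutoff correction, which is small by non-stationary phase since it lives in $|\zeta|\geq R_*$; boundedness of $\mu$ makes each Airy derivative bounded. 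The remainder is estimated directly because $|\zeta|^{p+n}$ is integrable over this compact set, giving a contribution bounded by $C\|\tilde r_k^{(n)}\|_{L^\infty} \leq C\|\tilde r_k\|_{W^{n,\infty}}$.

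For $J_{\rm far}$, Lemma~\ref{Uest} applied to the phase $\varphi$ with $R_0 = R_*$ (independent of $k$) controls each $u_{\ell,m} \in \mathcal{U}_{m-\ell}(\varphi)$ uniformly on the support. Applying Lemma~\ref{nonstatphase} $m$ times with this phase represents $J_{\rm far}$ as a sum of integrals of the form $\int (\zeta^p A(\zeta))^{(\ell)}\,u_{\ell,m}/(\varphi')^m\,e^{i\varphi}\,d\zeta$ over $R_*\leq |\zeta|\leq 2Rk^{1/3}$. The crucial step is to avoid handling $\zeta^p$ by pure integration by parts; instead one uses the Airy identity
\[
\zeta^2 e^{i\varphi(\zeta)} = \mu\,e^{i\varphi(\zeta)} + i\partial_\zeta e^{i\varphi(\zeta)},
\]
to replace two powers of $\zeta$ either by the bounded factor $\mu$ (no derivative of $\tilde r_k$ incurred) or, after a further integration by parts, by a derivative of the amplitude (gaining a factor $k^{-1}$ at the cost of one derivative of $\tilde r_k$). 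Iterating this reduction and tracking the rescaled norm $\|\tilde r_k\|_{W^{n,\infty}} = \sum_{j=0}^n k^{-j/3}\|r^{(j)}\|_{L^\infty}$ produces the uniform bound on $J_{\rm far}$.

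The main obstacle is the careful bookkeeping in the far piece. A naive application of non-stationary phase alone, ignoring the specific algebraic structure of $\phi_a$, requires $n$ of order $(p+1)/2$ derivatives, which is too demanding. The improvement to $n = \lceil(p+1)/5\rceil$ comes from combining two reduction mechanisms: the smallness $|\delta - 2\eta_0\eta| \leq \xi_0^2 k^{-2/3}/2$ from \eqref{etadeltasmall} turns the $\mu$-branch of each Airy reduction into a $k^{-2/3}$ gain without consuming a derivative of $\tilde r_k$, while the IBP branch gives a $k^{-1}$ gain per derivative consumed. Balancing these two mechanisms against the polynomial growth $\zeta^p$ over the support of size $\sim k^{1/3}$ is what forces the stated Sobolev index.
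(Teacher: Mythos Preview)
Your overall architecture matches the paper's: rescale $\theta=k^{-1/3}\zeta$ so that $k\phi_a$ becomes the $k$-free Airy phase $\varphi(\zeta)=-\tfrac13\zeta^3+\mu\zeta$ with $\mu=k^{2/3}(\delta-2\eta_0\eta)$ bounded by \eqref{etadeltasmall}, then split at a fixed $\zeta$-scale into a near piece containing the origin and a far piece. For the near piece the paper is considerably simpler than your proposal: since that integral is supported on $|\zeta|\le 2$, the crude bound
\[
|I_1|\le\int_{-2}^2|\zeta|^p\,|\tilde r_k(\zeta)|\,d\zeta\le C\|\tilde r_k\|_{L^\infty}
\]
is immediate, and no Taylor expansion of $\tilde r_k$, Airy-derivative approximation, or cutoff correction is used.

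Your far-piece argument contains a genuine error. After the rescaling you carry out, the relevant coefficient is $\mu=k^{2/3}(\delta-2\eta_0\eta)$, and \eqref{etadeltasmall} gives only $|\mu|\le\xi_0^2/2$; that is, $\mu$ is $O(1)$, not $O(k^{-2/3})$. Hence the ``$\mu$-branch'' of your Airy reduction $\zeta^2 e^{i\varphi}=\mu\,e^{i\varphi}+i\partial_\zeta e^{i\varphi}$ yields \emph{no} $k$-gain whatsoever, and the balancing you describe to reach $n=\lceil(p+1)/5\rceil$ rather than $\lceil(p+1)/2\rceil$ collapses. (You appear to be conflating $\mu$ with the unscaled quantity $\delta-2\eta_0\eta$, which is indeed $O(k^{-2/3})$; but in the $\zeta$-variable the identity involves $\mu$, not that quantity.) The paper's treatment of the far piece does not attempt any such algebraic reduction: it applies Lemma~\ref{nonstatphase} directly to the rescaled integral with amplitude $b(\zeta)\zeta^p$ and phase $\phi_a(\tilde\delta,\cdot,\tilde\eta)$, bounds the functions $u_{\ell,n}$ uniformly via Lemma~\ref{Uest}, and then reads off the condition $5n\ge p+1$ from a single power count over the support $1\le|\zeta|\le 2k^{1/3}R$.
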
 
\begin{proof}
For $\delta-2\eta\eta_0=0$
the phase $\phi_a$ has a degenerate stationary point at the origin.
We will therefore treat 
the integral in the vicinity of the origin
 separately. Away from the origin
we have the same type of lower bound $\partial_\theta\phi_a\geq c\theta^2$
as in Lemma~\ref{Itail} and we can therefore once again use 
the non-stationary phase lemma to estimate
the integral.

For the proof we use the rescaled variables
$\zeta=k^{1/3}\theta$,
$\tilde{\delta}=k^{2/3}\delta$,
$\tilde{\eta}=k^{2/3}\eta$.
Since
$$
  k\phi_a(\tilde\delta/k^{2/3},\zeta/k^{1/3},\tilde\eta/k^{2/3}) = 
  -\frac{1}{3}\zeta^3+\zeta(\tilde\delta-2\eta_0\tilde\eta)
  =\phi_a(\tilde\delta,\zeta,\tilde\eta),
$$
we can rewrite the integral as
$$
I=
    k^{-p/3}
 \int \psi\left(\frac{\zeta}{k^{1/3}R}\right)\zeta^p
  \tilde{r}_k(\zeta)
e^{i\phi_a\left(\tilde\delta,\zeta,\tilde\eta\right)}
d\zeta.
$$
We then divide the integral into two pieces,
\begin{align*}
I&=
    k^{-p/3}
 \int \psi\left(\zeta\right)
 \psi\left(\frac{\zeta}{k^{1/3}R}\right)\zeta^p
   \tilde{r}_k(\zeta)
e^{i\phi_a\left(\tilde\delta,\zeta,\tilde\eta\right)}
d\zeta \\
&\ \ +
    k^{-p/3}
 \int 
 (1-\psi\left(\zeta\right))
 \psi\left(\frac{\zeta}{k^{1/3}R}\right)\zeta^p
  \tilde{r}_k(\zeta)
e^{i\phi_a\left(\tilde\delta,\zeta,\tilde\eta\right)}
d\zeta = k^{-p/3}(I_1+I_2),
\end{align*}
where $I_1$ is the part close to the origin containing
the stationary point, and $I_2$ is the remaining part.
Note that $I_2$ matches the general
form of the integral 
in Lemma~\ref{Itail}
if we take $t=k^{1/3}R^2$.

For $I_1$ we simply have
$$
 |I_1|\leq \int_{-2}^2
  |\zeta|^p
 \left|\tilde{r}_k(\zeta)
\right|
d\zeta \leq C ||  \tilde{r}_k||_{L^\infty(\Real)},
$$
with $C$
independent of $k$.
If $2k^{1/3}R\leq 1$ we have $I_2=0$ and the proof is
complete. We assume henceforth that $2k^{1/3}R> 1$
and let
$$
b(\zeta)=(1-\psi(\zeta))
\psi\left(\frac{\zeta}{k^{1/3}R}\right)
  \tilde{r}_k(\zeta),
$$
the support of which
lies in the compact set 
 $K=\{ \zeta\in\Real\ |\ 1\leq |\zeta| \leq 2k^{1/3}R \}$.
Then for $\zeta\in K$, by \eqref{etadeltasmall},
$$
|\partial_\zeta{\phi_a}(\tilde\delta,\zeta,\tilde\eta)|
=
|\zeta^2 +k^{2/3}(2\eta_0\eta-\delta)|\geq 
|\zeta^2|- k^{2/3}(2|\eta|+|\delta|)\geq 
|\zeta^2|- \frac{\xi_0^2}2\geq 
\frac12|\zeta^2|.
$$
Thus, since $\phi_a$ has no stationary points on $K$
we can use Lemma~\ref{nonstatphase}
with 
$a(\zeta)=b(\zeta)\zeta^p$ and $D$ an open bounded set
containing $K$. This gives
$$
  I_2 = 
  (ik)^{-n}
\sum_{\ell=0}^n 
\int_{K}\left(\frac{d^\ell}{d\zeta^\ell}b(\zeta)\zeta^p \right)
  \frac{u_{\ell,n}(\zeta)}{{\tilde\phi_\zeta(\tilde\delta,\zeta,\tilde\eta)}^{n}}
   e^{ik\phi_a(\tilde\delta,\zeta,\tilde\eta)}
d\zeta,
$$
where $u_{\ell,n} \in {\mathcal U}_{n-\ell}$, with
 ${\mathcal U}_{p}$ defined in (\ref{Udef}).
This expression is now estimated in the same way as
\eqref{Inonstat1} above.
Since $|\tilde{\delta}|\leq \xi_0^2/2\leq 1/2$
and $K$ satsfies the assumptions of Lemma~\ref{Uest}
we obtain a uniform bound for $u_{\ell,n}$ on $K$.
Then
\begin{align*}
 |I_2| &
 \leq  
  Ck^{-n}
\sum_{\ell=0}^n 
\sum_{j=0}^{\min(\ell,p)}
  ||b^{(\ell-j)}||_{L^\infty(\Real)}
\int_{1}^{2k^{1/3}R} \frac{d\zeta}{
|\zeta|^{2n-p+j}} 
 \leq  
  Ck^{-n}
  ||b||_{W^{n,\infty}(\Real)}
\int_{1}^{2k^{1/3}R} \frac{d\zeta}{
|\zeta|^{2n-p}} \\
&\leq
  Ck^{-n}
  ||b||_{W^{n,\infty}(\Real)}
  \max(1,(2k^{1/3}R)^{p-2n})2k^{1/3}R
\leq
  C
  ||b||_{W^{n,\infty}(\Real)}
  \max(k^{1/3-n},k^{(p-5n+1)/3}R^{p-2n})R_1\\
&\leq
  C \max(1,R_0^{p-2n},R_1^{p-2n})R_1
  ||b||_{W^{n,\infty}(\Real)} 
  =:\tilde{C}||b||_{W^{n,\infty}(\Real)},
\end{align*}
since $p+1\leq 5n$ and $k\geq 1$.
Moreover, as in the proof of Lemma~\ref{Itail}, 
by \eq{Wnineq1} and \eq{Wnineq2}, since $k^{1/3}R> 1/2$,
$$
|I_2|\leq \tilde{C}||b||_{W^{n,\infty}(\Real)}
\leq \tilde{C}C_n^22^n
||1-\psi||_{W^{n,\infty}(\Real)}
||\psi||_{W^{n,\infty}(\Real)}
||\tilde{r}_k||_{W^{n,\infty}(\Real)}
\leq C
||\tilde{r}_k||_{W^{n,\infty}(\Real)}.
$$
Together the estimates of $I_1$
and 
$I_2$ then prove the lemma.
We finally note that since $r\in W^{n,\infty}(\Real)$
the norm $||\tilde{r}_k||_{W^{n,\infty}(\Real)}$ is bounded
because of \eq{Wnineq2} and \eqref{etadeltasmall}.
\end{proof}

Finally, we show that the derivatives of the
Airy function are well approximated
by an oscillatory integral with a monomial 
factor and the Airy phase.
\begin{lem}\label{IAiry} 
Let 
$$
I = k^{1/3}
 \int \psi(\theta/R)\theta^p
e^{ik\phi_a(\delta,\theta,\eta)}
d\theta,
$$
where
$c_0\leq R$ with $c_0$ as in Lemma~\ref{phiest}
and $|\delta|\leq 1$.
If
$c_0(1+|\eta|^{1/2})\leq R$ and
$n\geq 1+p/2$, there is a constant $C_n$,
independent of $k$, $R$, $\delta$ and $\eta$,
such that
$$
 \left|I-
\frac{2\pi i^p}{k^{p/3}}\Ai^{(p)}(k^{2/3}(2\eta_0\eta-\delta))
\right|\leq  
C_nk^{1/3-n}.
$$
\end{lem}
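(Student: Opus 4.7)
The plan is to identify $\tfrac{2\pi i^p}{k^{p/3}}\Ai^{(p)}(k^{2/3}(2\eta_0\eta-\delta))$ itself as a regularized oscillatory integral with phase $\phi_a$ and monomial factor $\theta^p$, so that the difference with $I$ reduces precisely to the kind of tail estimated by Lemma~\ref{Itail}. Concretely, I would start from the distributional Fourier identity $\Ai(z)=\tfrac{1}{2\pi}\int e^{-i(\zeta^3/3+z\zeta)}d\zeta$ (valid as the complex conjugate of the representation already used in the paper, since $\Ai$ is real on $\Real$). Differentiating $p$ times in $z$ under the regularization of Lemma~\ref{lem:Fregularize} gives $\Ai^{(p)}(z)=\tfrac{(-i)^p}{2\pi}\int \zeta^p e^{-i(\zeta^3/3+z\zeta)}d\zeta$. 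Rescaling $\zeta=k^{1/3}\theta$ with $z=k^{2/3}(2\eta_0\eta-\delta)$ turns the exponent into $ik\phi_a(\delta,\theta,\eta)$, and using $(-i)^{-p}=i^p$ yields
$$
\frac{2\pi i^p}{k^{p/3}}\Ai^{(p)}(k^{2/3}(2\eta_0\eta-\delta))=\lim_{t\to\infty}k^{1/3}\int\psi(\theta/t)\theta^p e^{ik\phi_a(\delta,\theta,\eta)}\,d\theta.
$$

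Next, I would subtract. For any $t\ge 2R$ the support of $\psi(\cdot/R)$ is contained in $\{|\theta|\le 2R\}\subset\{|\theta|\le t\}$, so $\psi(\theta/t)=1$ on that support and therefore $\psi(\theta/R)-\psi(\theta/t)=-(1-\psi(\theta/R))\psi(\theta/t)$. Combining with the representation above gives
$$
I-\frac{2\pi i^p}{k^{p/3}}\Ai^{(p)}(k^{2/3}(2\eta_0\eta-\delta))=-\lim_{t\to\infty}k^{1/3}\int (1-\psi(\theta/R))\psi(\theta/t)\theta^p e^{ik\phi_a(\delta,\theta,\eta)}\,d\theta.
$$
The integral on the right is exactly the $I_t$ of Lemma~\ref{Itail} with $r\equiv 1$. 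All its hypotheses ($|\delta|\le 1$, $c_0\le R<t$, $c_0(1+|\eta|^{1/2})\le R$, $n\ge 1+p/2$) are shared with the present lemma, and $\|1\|_{W^{n,\infty}(\Real)}=1$, so $|I_t|\le C_n k^{1/3-n}$ uniformly in $t$. Passing to the limit as $t\to\infty$ then completes the estimate.

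The delicate point is the first step: one has to justify the distributional differentiation and variable change carefully in order to obtain a bounded-support oscillatory integral in $\theta$ with phase $\phi_a$ (rather than the customary Airy phase in the dual variable) for $\Ai^{(p)}$. Once that representation is in hand, the identity of the two cutoffs $\psi(\theta/R)$ and $\psi(\theta/t)$ on the overlap region, together with the already-proved tail estimate in Lemma~\ref{Itail}, yields the claim essentially for free.
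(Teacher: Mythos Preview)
Your proposal is correct and follows essentially the same route as the paper's proof: both represent $\tfrac{2\pi i^p}{k^{p/3}}\Ai^{(p)}$ as the regularized oscillatory integral $\lim_{t\to\infty}k^{1/3}\int\psi(\theta/t)\theta^p e^{ik\phi_a}d\theta$, subtract, and invoke Lemma~\ref{Itail} with $r\equiv 1$. The only cosmetic difference is that the paper starts from the standard Fourier representation $\Ai^{(p)}(\rho)=\lim_{t\to\infty}\tfrac{1}{2\pi}\int\psi(\zeta/t)(i\zeta)^p e^{i(\zeta^3/3+\zeta\rho)}d\zeta$, which after rescaling produces $e^{-ik\phi_a}$, and then flips $\theta\mapsto -\theta$ using that $\phi_a$ is odd and $\psi$ even; you instead take the complex conjugate at the outset (legitimate since $\Ai$ is real on $\Real$) and land directly on $e^{ik\phi_a}$, avoiding the parity argument.
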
 
\begin{proof}
The Fourier transform of $\Ai^{(p)}$ in ${\mathcal S}'(\Real)$ is
$$
  \Fcal(\Ai^{(p)})(\zeta) = 
  \frac{1}{\sqrt{2\pi}}
(i\zeta)^p e^{i\frac{\zeta^3}{3}}.
$$
Therefore, using
Lemma \ref{lem:Fregularize},
and noting that ${\mathcal F}^{-1}(\cdot)(\rho)
={\mathcal F}(\cdot)(-\rho)$,
$$
\Ai^{(p)}(\rho) = 
{\mathcal F}^{-1}\left(
\frac{1}{\sqrt{2\pi}}
(i\zeta)^p e^{i\frac{\zeta^3}{3}}
\right)(\rho)=
\lim_{t\to\infty}
\frac{1}{2\pi}
 \int \psi(\zeta/t)(i\zeta)^p
e^{i\left(\frac{\zeta^3}{3}+\zeta\rho\right)}
d\zeta.
$$
After rescaling $\theta = k^{-1/3}\zeta$ we get
$$
 \int \psi(\zeta/t)(i\zeta)^p
e^{i\left(\frac{\zeta^3}{3}+\zeta\rho\right)}
d\zeta
= 
i^pk^{\frac{p+1}{3}}
\int \psi(\theta k^{1/3}/t) \theta^p
e^{ik\left(\frac{\theta^3}{3}+k^{-2/3}\theta\rho\right)}
d\theta.
$$
It follows that if 
\begin{equation}\label{rhodef}
\rho = k^{2/3}(2\eta_0\eta-\delta),
\end{equation}
then
\begin{align*}
\frac{2\pi}{(ik^{1/3})^p}\Ai^{(p)}(k^{2/3}(2\eta_0\eta-\delta)) &=
k^{1/3}
\lim_{t\to\infty}
 \int \psi(\theta k^{1/3}/t) \theta^p
e^{ik\left(\frac{\theta^3}{3}+k^{-2/3}\theta\rho\right)}
d\theta
\\
&=
k^{1/3}
\lim_{t\to\infty}
 \int \psi(\theta/t) \theta^p
e^{-ik\phi_a(\delta,\theta,\eta)}
d\theta \\
&=
k^{1/3}(-1)^{p}
\lim_{t\to\infty}
 \int \psi(-\theta/t) \theta^p
e^{-ik\phi_a(\delta,-\theta,\eta)}
d\theta.
\end{align*}
Since $\phi_a$ is odd and $\psi$ is even in $\theta$ we obtain
\begin{align*}
\frac{2\pi i^p}{k^{p/3}}\Ai^{(p)}(k^{2/3}(2\eta_0\eta-\delta))
&=
I+k^{1/3}
 \lim_{t\to\infty} \int (1-\psi(\theta/R))\psi(\theta/t)\theta^p
e^{ik\phi_a(\delta,\theta,\eta)}
d\theta,
\end{align*}
The result now follows from Lemma~\ref{Itail}, with $r\equiv 1$.
\end{proof}

\section{Solution estimates}\label{SolEst}

In Sections \ref{sec:Exactsols} and \ref{sec:GBapp}
it was shown that 
the partial Fourier
transform in $y$
for
both the exact solution and the
Gaussian beam approximation  can be written on the form
$$
  \hat{u}(x,\eta) = \hat{v}(x,\eta;k)\hat{A}(\eta),
$$
where $A$ is the amplitude function
and $\hat{v}$ for the two cases
are given in \eqref{vhatdef2} and \eqref{vgbhatdef}.
In this section we prove bounds of those $\hat{v}$
in terms of $k$
and $\eta$, which are valid
for all $\eta\in \Real$, $x\in [0,x_c]$ and $k\geq 1$.
We start with the Gaussian beam superposition case
and estimate ${\hat v}_{GB}$ as follows.

\begin{lem}\label{vgbest}
For $\hat{v}_{GB}$ defined in \eqref{vgbhatdef},
there is a constant $M$ such that 
\begin{equation}\label{vgbesteq}
   |\hat{v}_{GB}(\eta,x,k)| \leq M \left(1+\log\left(1+|\eta|^{1/2}\right)\right)k^{1/2},
\end{equation}
for all $\eta\in \Real$, $x\in [0,x_c]$ and $k\geq 1$.
\end{lem}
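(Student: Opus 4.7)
The plan is to reduce the bound on $\hat v_{GB}$ to a bound on the $\theta$-integral defining $I_t$ in \eqref{Itdef}, then split that integral at a cutoff radius proportional to $1+|\eta|^{1/2}$ so that on the inner piece we use only the decay of $1/\sqrt{q(\xi_0+\theta)}$ together with $|e^{ik\phi_g}|\le 1$, and on the outer piece we apply the non-stationary phase estimate of Lemma~\ref{Itail}.

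First, observe that $|P_{GB}(k,\eta)|=2\sqrt{\pi\xi_0}$ by \eqref{pgbdef}, which is bounded uniformly by \eqref{xi0deltabound}. Also, $\sqrt{q(\xi_0)}$ is uniformly bounded by Lemma~\ref{qest}. Hence the statement reduces to the claim that
\[
 J_t(\eta,\delta,k) := \int \psi(\theta/t) \frac{e^{ik\phi_g(\delta,\theta,\eta)}}{\sqrt{q(\xi_0+\theta)}} d\theta
\]
admits, uniformly in $t$, a bound of the form $C(1+\log(1+|\eta|^{1/2}))$ for $k\ge 1$, so that after multiplication by $k^{1/6}\cdot k^{1/3}/(2\pi)$ we gain the overall $k^{1/2}$.

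Set $R := c_0(1+|\eta|^{1/2})$ with $c_0$ as in Lemma~\ref{phiest}, and decompose $J_t = J_t^{\rm in} + J_t^{\rm out}$ using $1=\psi(\theta/R)+(1-\psi(\theta/R))$. For the inner piece, once $t\ge 2R$ the cutoff $\psi(\theta/t)$ is identically $1$ on $\mathrm{supp}\,\psi(\cdot/R)$, so $J_t^{\rm in}$ is $t$-independent there. Since Lemma~\ref{phiest} gives $\Im\phi_g\ge 0$, we have $|e^{ik\phi_g}|\le 1$, and Lemma~\ref{qest} gives $|q(\xi_0+\theta)|^{-1/2}\le C/\sqrt{1+\theta^2}$. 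Thus
\[
|J_t^{\rm in}| \le C\int_{-2R}^{2R}\frac{d\theta}{\sqrt{1+\theta^2}} \le C(1+\log R) \le C(1+\log(1+|\eta|^{1/2})).
\]

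For the outer piece, apply Lemma~\ref{Itail} with $p=0$, $n=1$, and amplitude $r(\theta)=1/\sqrt{q(\xi_0+\theta)}$. By Lemma~\ref{qest}, $r\in W^{1,\infty}(\Real)$ with norm independent of $\xi_0$, and the hypothesis $R\ge c_0(1+|\eta|^{1/2})$ is satisfied by construction. Lemma~\ref{Itail} then yields
\[
\left| k^{1/3} J_t^{\rm out} \right| \le C k^{1/3 - 1} \|r\|_{W^{1,\infty}} = O(k^{-2/3}),
\]
uniformly in $t$, $\eta$, $\delta$, so in particular $|J_t^{\rm out}|\le C k^{-1/3}\le C$ for $k\ge 1$. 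Combining the two estimates, passing to the limit $t\to\infty$, and collecting the prefactors $k^{1/6}\cdot k^{1/3}$ from \eqref{Itdef} and \eqref{vgbhatdef} gives exactly \eqref{vgbesteq}.

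The only slightly delicate point is making sure all constants stay independent of $\xi_0\in[\bar\xi_0,\bar\xi_1]$ and of $\delta\in[0,\xi_0^2]\subset[0,1]$; this is already secured by Lemmas~\ref{qest}, \ref{phiest}, and \ref{Itail}, each of which was stated with uniformity in these parameters. The logarithmic factor in the bound is unavoidable and comes purely from the $1/\sqrt{1+\theta^2}$ decay of the amplitude integrated over a window whose size grows with $|\eta|^{1/2}$; all oscillatory gain is used to control the tail $J_t^{\rm out}$, not the inner part.
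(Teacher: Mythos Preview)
Your proof is correct and follows essentially the same approach as the paper: split the $\theta$-integral at $R=c_0(1+|\eta|^{1/2})$, bound the inner piece by the elementary estimate $\int_{-2R}^{2R}(1+\theta^2)^{-1/2}\,d\theta\lesssim 1+\log R$ using $\Im\phi_g\ge 0$ and Lemma~\ref{qest}, and control the tail with Lemma~\ref{Itail} at $p=0$, $n=1$. The only cosmetic difference is that the paper keeps the $\psi(\theta/t)$ factor in the inner piece rather than arguing it disappears for $t\ge 2R$, and absorbs $\sqrt{q(\xi_0)}/(2\pi)$ into the amplitude $r$ from the start.
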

\begin{proof}
From \eqref{Itdef}
in
Section~\ref{GBsuper} 
we have
$$
I_t=
k^{1/3}
 \int\psi(\theta/t)
 r(\theta)
e^{ik\phi_g(\delta,\theta,\eta)}
d\theta,\qquad
r(\theta) = \frac{1}{2\pi}\sqrt{\frac{q(\xi_0)}{q(\xi_0+\theta)}
}.
$$
We note that $r\in 
W^{n,\infty}(\Real)$ for all $n$ by Lemma~\ref{qest}
and that $|\delta|=|x_c-x|\leq |x_c|=\xi_0^2\leq 1$.
We divide the integral into two parts.
Let $R=R(\eta)=c_0\left(1+|\eta|^{1/2}\right)$ 
with $c_0$ as in Lemma \ref{phiest} and define
$$
I_t  =
k^{1/3}
 \int
 \psi(\theta/R)\psi(\theta/t)r(\theta)
e^{ik\phi_g(\delta,\theta,\eta)}
d\theta+
k^{1/3}
 \int
 (1-\psi(\theta/R))\psi(\theta/t)r(\theta)
e^{ik\phi_g(\delta,\theta,\eta)}
d\theta 
  =: I_{1,t}+I_{2,t}.
$$
For $I_{1,t}$ we have,
again using Lemma~\ref{qest}, 
and that fact that $\Im\phi_g\geq 0$ by Lemma~\ref{phiest},
\begin{align*}
  |I_{1,t}| &\leq 
  k^{1/3}
  \sqrt{\frac{q_1}{q_0}}
  \int_{-2R(\eta)}^{2R(\eta)} \frac{e^{-k\Im\phi_g(x,\theta,\eta)}}{\sqrt{1+\theta^2}}d\theta \leq 
    k^{1/3}
  \sqrt{\frac{q_1}{q_0}}
\int_{-2R(\eta)}^{2R(\eta)} \frac{d\theta}{\sqrt{1+\theta^2}}
\\
   &\leq C    k^{1/3}\log(R(\eta))
   \leq C'    k^{1/3}\log(1+|\eta|^{1/2}).
\end{align*}
For $I_{2,t}$ we use Lemma~\ref{Itail} with $p=0$, 
which says that for any $n\geq 1$
and $t>R(\eta)$,
$$
  |I_{2,t}|\leq 
  C_n k^{1/3-n}
  ||r||_{W^{n,\infty}(\Real)}
  \leq C_n'k^{1/3-n},
$$
where $C_n'$ is independent of $k$, $R(\eta)$, $\delta$, $\eta$ and $t$.
By \eqref{pgbdef} we have
$|P_{GB}|=2(\pi \xi_0)^{1/2}\leq 2\sqrt{\pi}$ 
for all $\eta$ and $k$.
Then 
we  get
\begin{align*}
|\hat{v}_{GB}(\eta,x,k)|&\leq
\lim_{t\to\infty}
k^{1/6}|P_{GB}(x,\eta)|(|I_{1,t}|+|I_{2,t}|)
\leq 
2\sqrt{\pi}k^{1/6}\Bigl[C'k^{1/3}\log(1+|\eta|^{1/2})+
C_n'k^{1/3-n}\Bigr],
\end{align*}
and upon taking $n=1$
the Lemma follows with $M=2\sqrt{\pi}\max(C',C'_1)$.
\end{proof}


Next, for the exact solution, we estimate
$\hat{v}$.

\begin{lem}\label{vexactest2}
For $\hat{v}$ defined in \eqref{vhatdef2}
there is a constant $M$ such that
$$
   |\hat{v}(\eta,x,k)| \leq M k^{1/6},
$$
for all $\eta\in \Real$, $x\in [0,x_c]$ and $k\geq 1$.
\end{lem}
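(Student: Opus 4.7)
\smallskip

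\noindent\textbf{Plan of proof.} Using the definition of $P(k,\eta)$ in \eqref{vhatdef2}, the $k^{1/6}$ prefactor cancels and $\hat v$ reduces to
$$
\hat v(\eta,x,k)=\bar\alpha\,\frac{\Ai\bigl(k^{2/3}(x-X)\bigr)}{\Ai\bigl(\alpha k^{2/3}X\bigr)},
\qquad X=1-(\eta_0+\eta)^2\le 1.
$$
Thus the plan is to bound the numerator by a constant and to show
$$
\frac{1}{|\Ai(\alpha k^{2/3}X)|}\le C\,k^{1/6},
$$
uniformly in $\eta\in\mathbb R$ and $k\ge 1$. The factor $k^{1/6}$ will come entirely from the denominator in the regime where $k^{2/3}X$ is large and positive; this corresponds precisely to the caustic regime ($X$ of order $x_c$, i.e.\ $\eta$ near $0$), which is consistent with the fact that the exact solution itself grows as $k^{1/6}$ at the caustic.

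\smallskip

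\noindent\emph{Numerator.} Since $k^{2/3}(x-X)\in\mathbb R$ and $\Ai$ is bounded on $\mathbb R$ (decaying at $+\infty$ and oscillating with amplitude $|z|^{-1/4}$ at $-\infty$), I would simply invoke $\|\Ai\|_{L^\infty(\mathbb R)}<\infty$ from Section~9 to get $|\Ai(k^{2/3}(x-X))|\le C$, uniformly in $\eta$, $x$ and $k$.

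\smallskip

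\noindent\emph{Denominator.} The key step is the uniform lower bound
$$
|\Ai(\alpha w)|\ \ge\ c\,(1+w_+)^{-1/4},\qquad w\in\mathbb R,\qquad w_+:=\max(w,0),
$$
for some absolute constant $c>0$. To establish this I would argue as follows. First, $w\mapsto\Ai(\alpha w)$ is continuous on $\mathbb R$ and never vanishes: all zeros of $\Ai$ lie on the negative real axis, whereas $\arg(\alpha w)=\pi/3$ for $w>0$ and $\arg(\alpha w)=-2\pi/3$ for $w<0$, and $\Ai(0)>0$. Second, since $|\arg(\alpha w)|<\pi$, the standard uniform asymptotic expansion of the Airy function in this sector yields, as $w\to+\infty$,
$$
|\Ai(\alpha w)|=\frac{1}{2\sqrt\pi}\,w^{-1/4}\bigl(1+O(w^{-3/2})\bigr),
$$
using that $(\alpha w)^{3/2}=iw^{3/2}$, so the exponential factor has modulus one. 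As $w\to-\infty$, the same asymptotic gives $(\alpha w)^{3/2}=-|w|^{3/2}$, producing exponential growth of $|\Ai(\alpha w)|$. Consequently the function $g(w):=|\Ai(\alpha w)|(1+w_+)^{1/4}$ is continuous and strictly positive on $\mathbb R$, has a finite positive limit $1/(2\sqrt\pi)$ at $+\infty$, and tends to $+\infty$ at $-\infty$; hence $\inf_{w\in\mathbb R} g(w)=c>0$, which proves the claim.

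\smallskip

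\noindent\emph{Conclusion.} Applying the bound with $w=k^{2/3}X$ and using $X\le 1$ and $k\ge 1$ gives
$$
(1+w_+)^{1/4}\le (1+k^{2/3})^{1/4}\le 2^{1/4}k^{1/6},
$$
so $1/|\Ai(\alpha k^{2/3}X)|\le Ck^{1/6}$. Combining with the numerator bound yields $|\hat v(\eta,x,k)|\le Mk^{1/6}$. The main (and only non-trivial) obstacle is establishing the uniform lower bound on $|\Ai(\alpha w)|(1+w_+)^{1/4}$, which requires citing the uniform complex asymptotics of $\Ai$ in the sector $|\arg z|<\pi$ and the fact that the zeros of $\Ai$ are confined to the negative real axis.
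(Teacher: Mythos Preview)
Your proof is correct and follows essentially the same strategy as the paper: write $|\hat v|$ as the ratio $|\Ai(k^{2/3}(x-X))|/|\Ai(\alpha k^{2/3}X)|$, bound the numerator from above and the denominator from below using the properties of $\Ai$ collected in Section~\ref{section:Airy}.

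The only noteworthy difference is in the details of the two bounds. The paper invokes the full decay estimate \eqref{eq:Aisest}, $|\Ai(s)|\le C(1+|s|)^{-1/4}$, for the numerator and the two-sided lower bound \eqref{eq:Aiasest}, $|\Ai(\alpha s)|\ge C(1+|s|)^{-1/4}$, for the denominator, then combines them via the triangle inequality $|X|\le|x|+|x-X|$ to get
$$
|\hat v|\le C\left(1+\frac{k^{2/3}|x|}{1+k^{2/3}|x-X|}\right)^{1/4}\le C(1+x_ck^{2/3})^{1/4}.
$$
You instead use only $\|\Ai\|_{L^\infty(\Real)}<\infty$ for the numerator together with the one-sided bound $|\Ai(\alpha w)|\ge c(1+w_+)^{-1/4}$ and the simple observation $X\le 1$. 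Your route is slightly more economical and yields the same $Mk^{1/6}$; the paper's ratio argument additionally reveals that the bound is sharp precisely when $x=X$, which is perhaps why it was written that way.
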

\begin{proof}
From \eqref{vhatdef2} we have
$$
{\hat v}(\eta,x,k) = k^{1/6} P(k,\eta)\Ai(k^{\frac23}(x-X)),
\qquad P(k,\eta) = 
\frac{\bar\alpha k^{-1/6}}{\Ai(\alpha k^{2/3}X)},\qquad
X(\eta) = 1-(\eta_0+\eta)^2.
$$
Then, using \eq{eq:Aisest} and \eq{eq:Aiasest} in Lemma~\ref{lem:Aiprop},
\begin{align*}
   |\hat{v}(\eta,x,k)| &= 
   \frac{|\Ai(k^{\frac23}(x-X))|}{|\Ai({\alpha}k^{\frac23}X)|}
   \leq 
   C\frac{(1+k^{\frac23}|x-X|)^{-1/4}}{(1+k^{\frac23}|X|)^{-1/4}}
\leq 
   C\left(
   \frac{1+k^{\frac23}(|x|+|x-X|)}{1+k^{\frac23}|x-X|}
   \right)^{1/4}
 \\ &=
   C\left(
   1+
   \frac{k^{\frac23}|x|}{1+k^{\frac23}|x-X|}
   \right)^{1/4}
   \leq
   C\left(
   1+
|x_c|k^{\frac23}\right)^{1/4}\leq C(1+k^{1/6}).
\end{align*}
The result follows as $k\geq 1$.
(The estimate is sharp for $x=X$.)
\end{proof}

\section{Proof of the Main Result}

In this section we prove the main result Theorem \ref{mainresult}
estimating the $L^{\infty}$ error 
between the exact solution and the Gaussian beam solution.
To estimate the  difference between $u_{GB}(x,y)$ and 
the exact solution
$u(x,y)$, it is enough to control the $L^1$ norm of the difference between their scaled Fourier transforms
since
\begin{equation}\label{L1est1}
  ||u_{GB}(x,\cdot)-u(x,\cdot)||_{L^\infty}
  \leq\sqrt{\frac{k}{2\pi}}
||{\hat u}_{GB}(x,\cdot)-{\hat u}(x,\cdot)||_{L^1}.
\end{equation}
We will use this strategy.
From (\ref{vgbhatdef}) and (\ref{vhatdef2})
we get
$$
{\hat u}_{GB}(x,\eta_0+\eta) -
{\hat u}(x,\eta_0+\eta) = 
\bigl({\hat v}_{GB}(\eta,x,k)-
{\hat v}(\eta,x,k)\bigr) {\hat{A}}(\eta).
$$
We
divide the expression into two parts, one for $|\eta|$
smaller than $O(k^{-2/3})$ and one for $|\eta|$ larger than $O(k^{-2/3})$.
Thus, for $c$ to be determined below, we let
\begin{align}\label{L1est2}
  \sqrt{\frac{k}{2\pi}}
||{\hat u}_{GB}(x,\cdot)-{\hat u}(x,\cdot)||_{L^1}
  &\leq\sqrt{\frac{k}{2\pi}}
  \int_{|\eta|\leq ck^{-2/3}}
  |{\hat v}_{GB}(x,\eta)-{\hat v}(x,\eta)||{\hat{A}}(\eta)|d\eta\\
  &\ \ \ +\sqrt{\frac{k}{2\pi}}\int_{|\eta|\geq ck^{-2/3}}
  |{\hat v}_{GB}(x,\eta)-{\hat v}(x,\eta)||{\hat{A}}(\eta)|d\eta
  =: E_1+E_2.\nonumber
\end{align}
For the large values of $|\eta|$ we can immediately get a bound
of $O(k^{-1})$ by using the fact that $\hat{A}$ has very rapid
decay, being the Fourier transform
of $A\in\Scal$. This is used in the following lemma.

\begin{lem}\label{etaest1}
Suppose $f(\cdot,k)\in L^{1}_{\rm loc}(\Real)$
for each $k\geq 1$
and $A\in {\mathcal S}(\R)$. 
Let $c,\beta>0$ be given.
If
there exist $r\in\R$ and
$q,M\in\R_+$ such that, 
\begin{equation}\label{fcond1}
 |f(\eta,k)|\leq M(1+|k\eta|^q)k^r,\qquad \forall  k\geq 1, \quad
 |k\eta|\geq ck^\beta,
 \end{equation}
 then, for each $p\geq 0$, there exists a constant $C_p$,
 independent of $k$, but dependent on $A,M,q,r,c$,
 such that
 $$
    k^{1/2}\int_{|k\eta|\geq ck^\beta} \left|f(\eta,k)\hat{A}(\eta)\right|d\eta
    \leq C_p k^{-p}.
 $$
\end{lem}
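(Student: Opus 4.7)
The plan is to exploit that $A \in \Scal(\Real)$ yields rapid decay of $\hat{A}$, but expressed naturally in terms of the \emph{scaled} variable $k\eta$ rather than $\eta$ alone. Iterating the identity $\widehat{f_x}(\eta) = ik\eta\,\hat{f}(\eta)$ from Section~2 gives $(k\eta)^m \hat{A}(\eta) = (-i)^m \widehat{A^{(m)}}(\eta)$ for every integer $m\geq 0$, and combining this with the trivial bound $\|\hat{g}\|_{L^\infty}\leq \sqrt{k/(2\pi)}\,\|g\|_{L^1}$ applied to $g = A^{(m)}\in\Scal\subset L^1$ yields the pointwise estimate
\[
  |\hat{A}(\eta)| \;\leq\; \frac{C_m^A\sqrt{k}}{|k\eta|^m}, \qquad \eta\neq 0,
\]
with $C_m^A = \|A^{(m)}\|_{L^1}/\sqrt{2\pi}$ finite and independent of $k$. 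This is the key input: it transfers the smoothness of $A$ into decay in $|k\eta|$ of $\hat A$ at the expense of a factor $\sqrt{k}$.

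Next I would insert this bound together with the hypothesis \eqref{fcond1} on the set $\{|k\eta|\geq ck^\beta\}$, obtaining
\[
  |f(\eta,k)\hat{A}(\eta)| \;\leq\; M C_m^A\,\bigl(1+|k\eta|^q\bigr)|k\eta|^{-m}\, k^{r+1/2}.
\]
After multiplying by $k^{1/2}$ and making the change of variable $\mu=k\eta$ (which produces a factor $1/k$ from $d\eta = d\mu/k$), the integral becomes
\[
  k^{1/2}\!\int_{|k\eta|\geq ck^\beta}\!\!|f(\eta,k)\hat{A}(\eta)|\,d\eta
  \;\leq\; M C_m^A\, k^r\!\int_{|\mu|\geq ck^\beta}\!\!(1+|\mu|^q)|\mu|^{-m}\,d\mu.
\]

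For $m>q+1$ the remaining one-dimensional tail integrals are elementary: $\int_{|\mu|\geq ck^\beta}|\mu|^{-m}d\mu$ and $\int_{|\mu|\geq ck^\beta}|\mu|^{q-m}d\mu$ equal constants times $k^{-\beta(m-1)}$ and $k^{-\beta(m-q-1)}$ respectively, and since $k\geq 1$ and $q\geq 0$ the latter dominates, giving an overall bound of order $k^{r-\beta(m-q-1)}$. Because $\beta>0$, taking $m$ to be the smallest integer with $m\geq (r+p)/\beta + q + 1$ (and $m>q+1$) makes the exponent $\leq -p$, which is the desired estimate. I do not anticipate any real obstacle: the proof is essentially the standard fact that a Schwartz function has Fourier transform decaying faster than any polynomial, translated through the $k$-scaling; all remaining steps are elementary power-integral estimates and a bookkeeping of powers of $k$.
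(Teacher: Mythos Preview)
Your proposal is correct and follows essentially the same route as the paper: both arguments extract the bound $|\hat A(\eta)|\leq C_\ell\sqrt{k}\,(1+|k\eta|)^{-\ell}$ (or equivalently your $C_m^A\sqrt{k}\,|k\eta|^{-m}$) from $A\in\Scal$, substitute $\xi=k\eta$, and then choose the decay exponent large enough in terms of $p,r,q,\beta$ so that the resulting power-tail integral is bounded by $Ck^{-p}$. The only cosmetic difference is that the paper quotes the Schwartz decay of $\Fcal(A)$ directly, whereas you derive it via $\widehat{A^{(m)}}=(ik\eta)^m\hat A$ and $\|\hat g\|_{L^\infty}\leq\sqrt{k/2\pi}\,\|g\|_{L^1}$.
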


\begin{proof}
Since $A\in\mathcal{S}(\R)$
for all $\ell\geq 0$
there exists $c_\ell$ such that $\vert \mathcal{F}(A)(\eta)\vert\leq c_\ell/(1+\vert \eta\vert^{\ell})$.
Hence,
$$
|\hat{A}(\eta)|=
|\sqrt{k}\Fcal(A)(k\eta)|
\leq
\frac{c_\ell\sqrt{k}}{1+|k\eta|^\ell},
\qquad \forall \eta.
$$
Then 
for $\ell>q+1$, with $\xi=k\eta$,
\begin{align*}
    k^{1/2}\int_{|k\eta|\geq ck^\beta} \left|f(\eta,k)\hat{A}(\eta)\right|
    d\eta
&\leq
c_\ell k
\int_{|k\eta|\geq ck^\beta} \frac{|f(\eta,k)|}{1+|k\eta|^\ell} d\eta\leq
c_\ell M k^{r+1}
\int_{|k\eta|\geq ck^\beta}\frac{1+|k\eta|^q}{1+|k\eta|^\ell}d\eta
\\
&=
c_\ell M k^{r}
\int_{|\xi|\geq ck^\beta}\frac{1+|\xi|^q}{1+|\xi|^\ell}d\xi
\leq 
c_\ell M k^{r}
\int_{|\xi|\geq ck^\beta}\frac{1+|\xi|^q}{|\xi|^\ell}d\xi
\\
& =
2Mc_\ell \left(
\frac{c^{1-\ell}}{\ell-1}k^{r+\beta(1-\ell)}
+
\frac{c^{1-\ell+q}}{\ell-1-q}  k^{r+\beta(1-\ell+q)}
\right).
\end{align*}
For the given $p$ we now 
take $\ell=\ell_p:=\max((r+p)/\beta+q+1,q+2)$.
Then $r+\beta(1-\ell+q)\leq -p$ and
\begin{align*}
    k^{1/2}\int_{|k\eta|\geq ck^\beta} \left|f(\eta,k)\hat{A}(\eta)\right|d\eta
&\leq
2Mc_{\ell_p} \left(
\frac{c^{1-\ell_p}}{\ell_p-1}k^{-p-\beta q}
+
\frac{c^{1-\ell_p+q}}{\ell_p-1-q}  k^{-p}
\right)\\
&\leq 
\underbrace{
2Mc_{\ell_p} \left(
\frac{c^{1-\ell_p}}{\ell_p-1}
+
\frac{c^{1-\ell_p+q}}{\ell_p-1-q}
\right)}_{=:C_p}  k^{-p},
\end{align*}
which is the desired estimate. 
\end{proof}
In our case we let $f=(\hat{v}_{GB}-{\hat v})/\sqrt{2\pi}$ for fixed $x$.
Then it follows from Lemma~\ref{vgbest} 
and Lemma \ref{vexactest2}
that $f$ satisfies \eq{fcond1}, as for $k\geq 1$,
$$
|\hat{v}_{GB}(\eta,x,k)-\hat{v}(\eta,x,k)|
\leq Mk^{1/6}+ M(1+\log(1+|\eta|^{1/2}))k^{1/2}
\leq M'(1+|\eta|^{1/2})k^{1/2}
\leq M'(1+|k\eta|^{1/2})k^{1/2},
$$
where $M'$ is in fact independent also of $x\in [0, x_c]$.
Lemma~\ref{etaest1} with
$\beta=1/3$, $q=1/2$, $r=1/2$, $c=\xi_0^2/4$ and $p=1$ now shows that
\begin{equation}\label{E2est}
  |E_2|\leq C k^{-1}.
\end{equation}
The choice of $c$ will be motivated below in the next step.

To estimate $E_1$ we will use more precise estimates of 
$|\hat{v}_{GB}(\eta,x,k)-\hat{v}(\eta,x,k)|$ for small $|\eta|$ and the
following lemma.
\begin{lem}\label{etaest2}
Suppose $f(\cdot,k)\in L^{1}_{\rm loc}(\Real)$
for each $k\geq 1$
and $A\in {\mathcal S}(\R)$. 
Let $c,\beta>0$ be given.
If
there exist $r\in\R$ and
$q,M\in\R_+$ such that, 
\begin{equation}\label{fcond2}
 |f(\eta,k)|\leq M(1+|k\eta|^q)k^r,\qquad \forall  k\geq 1, \quad
 |k\eta|\leq ck^\beta,
 \end{equation}
 then there exists a constant $C$,
 independent of $k$, but dependent on $A,M,q,r,c$,
 such that
 $$
    k^{1/2}\int_{|k\eta|\leq ck^\beta} \left|f(\eta,k)\hat{A}(\eta)\right|
    d\eta\leq C k^{r}.
 $$
\end{lem}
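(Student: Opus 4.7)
The plan is to mirror the argument of Lemma~\ref{etaest1}, but instead of exploiting rapid decay of $\hat{A}$ at infinity, I will simply change variables $\xi = k\eta$ and use that $(1+|\xi|^q)\mathcal{F}(A)(\xi)$ is globally integrable because $A \in \mathcal{S}(\R)$. Since $\hat{A}(\eta) = \sqrt{k}\,\mathcal{F}(A)(k\eta)$ by \eqref{Fdef}, substituting $\xi = k\eta$ will turn $d\eta$ into $d\xi/k$, and the various powers of $k$ will collapse exactly to $k^r$ regardless of $\beta$.

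First I would insert the hypothesis \eqref{fcond2} and the formula for $\hat{A}$ into the integral to obtain
\begin{equation*}
k^{1/2}\int_{|k\eta|\leq ck^\beta}\!\! |f(\eta,k)\hat{A}(\eta)|\,d\eta
\;\leq\; M k^{r+1}\int_{|k\eta|\leq ck^\beta}\!\!\bigl(1+|k\eta|^q\bigr)|\mathcal{F}(A)(k\eta)|\,d\eta.
\end{equation*}
Second, I would perform the change of variables $\xi = k\eta$, which contributes a factor $k^{-1}$ from $d\eta$, yielding
\begin{equation*}
\leq\; M k^{r}\int_{|\xi|\leq ck^\beta}\bigl(1+|\xi|^q\bigr)|\mathcal{F}(A)(\xi)|\,d\xi.
\end{equation*}
Third, since $A \in \mathcal{S}(\R)$ implies $\mathcal{F}(A) \in \mathcal{S}(\R)$, the function $(1+|\xi|^q)|\mathcal{F}(A)(\xi)|$ is integrable on all of $\R$, so the remaining integral is dominated by
\begin{equation*}
\int_{\R}\bigl(1+|\xi|^q\bigr)|\mathcal{F}(A)(\xi)|\,d\xi \;=:\; C'/M < \infty,
\end{equation*}
a constant independent of $k$ (depending only on $A$ and $q$). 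Setting $C = MC'/M = C'$ gives the claimed estimate.

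There is no real obstacle here: the lemma is a clean companion to Lemma~\ref{etaest1} in which the decay of $\hat{A}$ plays no role on the bounded region, and the Schwartz class assumption on $A$ (which makes polynomial weights integrable against $\mathcal{F}(A)$) is exactly what makes the bound independent of the truncation scale $ck^\beta$. The only mild subtlety worth noting is that the constant $C$ depends on $c$, $q$, $M$, and seminorms of $\mathcal{F}(A)$, but not on $\beta$ or $k$, which is consistent with the statement.
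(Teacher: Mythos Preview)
Your proof is correct and follows essentially the same approach as the paper: both insert the bound on $f$, substitute $\xi=k\eta$, and then control the remaining integral by a $k$-independent quantity using the Schwartz property of $A$. The only cosmetic difference is that the paper invokes a pointwise decay bound $|\mathcal{F}(A)(\xi)|\leq c_\ell/(1+|\xi|^\ell)$ with $\ell>q+1$ before integrating, whereas you appeal directly to the integrability of $(1+|\xi|^q)|\mathcal{F}(A)(\xi)|$; these are equivalent uses of $A\in\mathcal{S}(\R)$.
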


\begin{proof}
As in the proof of Lemma~\ref{etaest1}, when $\ell>q+1$ we 
get
\begin{align*}
    k^{1/2}\int_{|k\eta|\leq ck^\beta} \left|f(\eta,k)\hat{A}(\eta)\right|d\eta
&\leq
c_\ell M k^{r}
\int_{|\xi|\leq ck^\beta}\frac{1+|\xi|^q}{1+|\xi|^\ell}d\xi\leq
\underbrace{c_\ell M 
\int\frac{1+|\xi|^q}{1+|\xi|^\ell}d\xi}_{=:C} k^{r}.
\end{align*}
This proves the lemma.
\end{proof}
As for $E_2$ we apply this lemma with 
$f=(\hat{v}_{GB}-{\hat v})/\sqrt{2\pi}$
and to get the bound \eq{fcond2} we need
to estimate the difference between
${\hat v}_{GB}$ and $\hat{v}$ when $|k\eta|\leq ck^{1/3}$.
This estimate is the main part of the proof. 

To examine ${\hat v}_{GB}-\hat{v}$ more carefully
we first recall the expressions:
\begin{align*}
{\hat v}_{GB}(\eta,x,k) &=  k^{1/6} P_{GB}(k,\eta) I(\eta,x,k),\\
\hat{v}(\eta,x,k) 
&=k^{1/6} P(k,\eta)\Ai(k^{\frac23}(x-X(\eta))),
\end{align*}
where
$$
I(\eta,x,k)=
\lim_{t\to\infty}
 \frac{k^{1/3}\sqrt{q(\xi_0)}}{2\pi}
 \int
 \psi(\theta/t)
\frac{e^{ik\phi_g(x,\theta,\eta)
}
}
{\sqrt{q(\xi_0+\theta)}} 
d\theta\,\qquad
P_{GB}(k,\eta)=
2
(\pi \xi_0)^{1/2}
e^{-i\pi/4}
e^{\frac23ik\xi_0^3-2ik\eta_0\xi_0\eta},
$$
and
\be\label{Xdef}
X(\eta) := 1-(\eta+\eta_0)^2,
\qquad P(k,\eta) = 
\frac{\bar\alpha k^{-1/6}}{\Ai(\alpha k^{2/3}X)}.
\ee
We
divide the difference $\hat{v}_{GB}(\eta,x,k)-\hat{v}(\eta,x,k)$ 
into three parts
\begin{align*}
\hat{v}(\eta,x,k)-\hat{v}_{GB}(\eta,x,k)&=
k^{1/6} P_{GB}(k,\eta)[\Ai(k^{\frac23}(x-X))-\Ai(k^{\frac23}(x-X-\eta^2))]
\\
&\ \ \ +k^{1/6} [P(k,\eta)-P_{GB}(k,\eta)]\Ai(k^{\frac23}(x-X))
\\
&\ \ \ +k^{1/6} P_{GB}(k,\eta)[\Ai(k^{\frac23}(x-X-\eta^2))-I(\eta,x,k)]
\\
&=: R_1 + R_2 + R_3.
\end{align*}
In three Lemmas below we show that when $x=x_c$ and $\eta\leq \xi_0^2k^{-2/3}/4$, there is a constant $M$ such that
$$
  |R_1| \leq M|k\eta|^2k^{-1},\qquad
  |R_2| \leq M(1+|k\eta|^2)k^{-5/6},\qquad
  |R_3| \leq M(1+|k\eta|^2)k^{-5/6}.
$$
It follows that
$$
|\hat{v}(\eta,x_c,k)-\hat{v}_{GB}(\eta,x_c,k)|\leq
  |R_1+R_2+R_3| \leq 3M(1+|k\eta|^2)k^{-5/6}, \qquad
  \text{when $|k\eta|\leq \frac{\xi_0^2}{4}k^{1/3}$ and $k\geq 1$},
$$
Then applying 
Lemma~\ref{etaest2} with
$\beta=1/3$, $q=2$, $r=-5/6$ and $c = \xi_0^2/4$ shows that
$$
  |E_1|\leq C k^{-5/6},
$$
when $x=x_c$. Together with \eqref{L1est1}, \eqref{L1est2} and \eqref{E2est}
this proves Theorem~\ref{mainresult}.

Note that the estimates of  $R_1$ and $R_2$ above are shown to be
valid for all  $x\in[0,x_c]$, while the $R_3$ estimate is considered,
in this paper, only for $x=x_c$.
Furtheremore, 
note that $R_2$ and $R_3$ exhibit the same loss of decay through
the term $k^{-5/6}$. In $R_2$ this comes from the estimate \eq{diffX}
and $R_3$ has $k^{1/6}$ embedded in \eq{R3est}.

We now turn to proving the lemmas about $R_j$.

\subsection{Estimate of $R_1$}

\begin{lem}
There is a constant $M$ independent of
$\eta$ and $k\geq 1$, such that
$$
  |R_1| \leq Mk\eta^2, \qquad \text{when}\ |\eta| \leq 1.
$$
\end{lem}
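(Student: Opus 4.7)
The plan is to write the bracket in $R_1$ as an integral of $\Ai'$, then use the fact that $|P_{GB}|$ is bounded and that $\Ai'$ grows at worst like $|z|^{1/4}$ on the real line, combined with the observation that both arguments of $\Ai$ are $O(k^{2/3})$ when $|\eta|\leq 1$ and $x\in[0,x_c]$.

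First I would recall that by \eqref{pgbdef} the prefactor satisfies $|P_{GB}(k,\eta)| = 2\sqrt{\pi\xi_0}$, which is bounded uniformly in $k$ and $\eta$ thanks to \eqref{xi0deltabound}. So
$$
  |R_1| \leq 2\sqrt{\pi\xi_0}\, k^{1/6}\bigl|\Ai(k^{2/3}(x-X))-\Ai(k^{2/3}(x-X-\eta^2))\bigr|.
$$
Next, since $\Ai$ is real-analytic on $\Real$, the mean value theorem gives
$$
  \Ai(k^{2/3}(x-X))-\Ai(k^{2/3}(x-X-\eta^2)) = k^{2/3}\eta^2\,\Ai'(\tau),
$$
for some $\tau$ between $k^{2/3}(x-X)$ and $k^{2/3}(x-X-\eta^2)$.

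Now I would bound $|\tau|$ in terms of $k$. From $X(\eta)=\xi_0^2-2\eta\eta_0-\eta^2$ and $\delta = \xi_0^2-x\in[0,\xi_0^2]$ for $x\in[0,x_c]$, we have $x-X-\eta^2 = -\delta+2\eta\eta_0$ and $x-X = -\delta+2\eta\eta_0+\eta^2$. Using $|\eta|\leq 1$, $|\eta_0|\leq 1$ and $\delta\leq 1$, both arguments are bounded in absolute value by $4$, so $|\tau|\leq 4k^{2/3}$.

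The main (and only nontrivial) step is to control $\Ai'(\tau)$. From the asymptotics of the Airy function derivative (the companion to \eqref{eq:Aiasest}), there is a constant $C$ such that $|\Ai'(z)|\leq C(1+|z|)^{1/4}$ for all $z\in\Real$; this is the uniform estimate on the real line, following from the decay $\Ai'(z)\sim O(z^{1/4}e^{-\frac23 z^{3/2}})$ as $z\to+\infty$ and the bounded oscillation $\Ai'(z) = O(|z|^{1/4})$ as $z\to-\infty$. Applying this with $|\tau|\leq 4k^{2/3}$ yields
$$
  |\Ai'(\tau)| \leq C(1+4k^{2/3})^{1/4}\leq C' k^{1/6},\qquad k\geq 1.
$$
Combining the three bounds,
$$
  |R_1| \leq 2\sqrt{\pi\xi_0}\,k^{1/6}\cdot k^{2/3}\eta^2\cdot C' k^{1/6} = M\,k\,\eta^2,
$$
with $M$ independent of $\eta$, $x$ and $k$, which is precisely the claim. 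The only expected subtlety is that one must appeal to the appropriate uniform real-line estimate for $\Ai'$ (not merely the boundedness of $\Ai$), so I would cite or quickly derive that bound from the same asymptotic analysis used for $\Ai$ in Lemma~\ref{lem:Aiprop}.
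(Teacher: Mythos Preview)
Your proof is correct and follows essentially the same approach as the paper: bound $|P_{GB}|$ uniformly, convert the Airy difference into $k^{2/3}\eta^2$ times a derivative (you use the mean value theorem, the paper writes the equivalent integral $k^{2/3}\int_0^{\eta^2}\Ai'(\cdots)\,ds$), and then invoke the real-line bound $|\Ai'(s)|\leq C(1+|s|)^{1/4}$ together with $|\tau|\leq Ck^{2/3}$ to close. The needed Airy derivative estimate is exactly \eqref{eq:Aiprimesest} in Lemma~\ref{lem:Aiprop}, so you can cite it directly rather than rederive it.
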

\begin{proof}
Since $|P_{GB}|\leq 2(\pi\xi_0)^{1/2}$
we have
$$
 |R_1| \leq 2\sqrt{\pi}
k^{1/6}|\Ai(k^{\frac23}(x-X))-\Ai(k^{\frac23}(x-X-\eta^2))|.
$$
Moreover, 
from \eq{eq:Aiprimesest} in Lemma \ref{lem:Aiprop}
 we get
\begin{align*}
  |\Ai(k^{\frac23}(x-X))-\Ai(k^{\frac23}
  (x-X-\eta^2))|
  &=k^{\frac23}\left|\int_0^{\eta^2}
\Ai'(k^{\frac23}(x-X-s))ds\right|
\\
&
\leq 
C_3 k^{\frac23} \eta^2 \max_{0\leq s\leq \eta^2}
(1+|k^{\frac23}(x-X-s)|)^{1/4}.
\end{align*}
Then, since 
$$
|X| \leq (|\eta|+|\eta_0|)^2-1 \leq 3,\qquad |x|\leq 1, \qquad
s\leq\eta^2\leq 1,
$$
we obtain
$$
|\Ai(k^{\frac23}(x-X))-\Ai(k^{\frac23}(x-X-\eta^2))|
\leq 
C_3 k^{\frac23} \eta^2 
(1+5k^{\frac23})^{1/4}
\leq C_36^{1/4}k^{5/6}\eta^2.
$$
It follows that $|R_1|\leq M k\eta^2$ where $M=2\sqrt{\pi}6^{1/4}C_3$
with $C_3$ being the constant in \eq{eq:Aiprimesest}.
\end{proof}

\subsection{Estimate of $R_2$}

\begin{lem}\label{R2lemma}
There is a constant $M$ dependent on $x$, but independent of
$\eta$ and $k\geq 1$, such that
$$
  |R_2| \leq M(1+k^2\eta^2)k^r, \qquad \text{when}\ |\eta| \leq \frac{\xi_0^2}{4}k^{-2/3}, \qquad
  r = 
  \begin{cases}
-5/6, & x=x_c,\\
-1, & 0\leq x<x_c.
\end{cases}
$$
\end{lem}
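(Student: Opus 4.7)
The plan is to compare $P(k,\eta)$ with $P_{GB}(k,\eta)$ via the large-argument asymptotic of the Airy function in the sector $|\arg z|<2\pi/3$, and then combine this with a pointwise bound on $|\Ai(k^{2/3}(x-X))|$. Under the hypothesis $|\eta|\leq \xi_0^2 k^{-2/3}/4$ and \eqref{xi0deltabound}, the argument $X(\eta)=\xi_0^2-2\eta_0\eta-\eta^2$ is uniformly bounded away from zero, so $z:=\alpha k^{2/3}X$ has $|z|\gtrsim k^{2/3}$ and the standard expansion $\Ai(z)\sim\tfrac{1}{2\sqrt{\pi}}z^{-1/4}e^{-\frac{2}{3}z^{3/2}}$ applies with a uniform multiplicative error of order $k^{-1}$. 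Invoking the identities $\alpha=e^{i\pi/3}$, $\bar\alpha\alpha^{1/4}=e^{-i\pi/4}$, and $\alpha^{3/2}=i$, this yields
\[
P(k,\eta)=2\sqrt{\pi}\,e^{-i\pi/4}X^{1/4}e^{\frac{2i}{3}kX^{3/2}}\bigl(1+E(k,\eta)\bigr),\qquad |E(k,\eta)|\leq Ck^{-1}.
\]

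Next I Taylor expand around $\eta=0$. Since $X(0)=\xi_0^2$ and $\frac{d}{d\eta}[\tfrac23 X^{3/2}]\big|_{\eta=0}=-2\eta_0\xi_0$, one has $X^{1/4}=\xi_0^{1/2}+O(|\eta|)$ and $\tfrac23 X^{3/2}=\tfrac23\xi_0^3-2\eta_0\xi_0\eta+O(\eta^2)$ uniformly on the admissible range. Subtracting $P_{GB}=2\sqrt{\pi}\,e^{-i\pi/4}\xi_0^{1/2}e^{ik(\frac23\xi_0^3-2\eta_0\xi_0\eta)}$ from the previous display and using $|e^{iz}-1|\leq|z|$ to control the exponential factor gives
\[
|P(k,\eta)-P_{GB}(k,\eta)|\leq C\bigl(k^{-1}+|\eta|+k\eta^2\bigr).
\]
The linear-in-$|\eta|$ term is absorbed by the AM-GM inequality $|\eta|\leq \tfrac12(k^{-1}+k\eta^2)$, leading to the central bound $|P-P_{GB}|\leq M(1+k^2\eta^2)k^{-1}$.

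To conclude, I estimate $|\Ai(k^{2/3}(x-X))|$ using \eqref{eq:Aisest}. Writing $x-X=-\delta+2\eta_0\eta+\eta^2$, at the caustic $x=x_c$ one has $\delta=0$ and the hypothesis gives $|k^{2/3}(x-X)|\leq C$, hence $|\Ai(k^{2/3}(x-X))|\leq C$; for $0\leq x<x_c$, $\delta$ is bounded below by a positive constant (depending on $x$), so $|\Ai(k^{2/3}(x-X))|\leq C(1+k^{2/3}\delta)^{-1/4}\leq Ck^{-1/6}\delta^{-1/4}$. Multiplying by $k^{1/6}$ and the bound on $|P-P_{GB}|$ delivers the two claimed estimates, with the caustic case losing a factor $k^{1/6}$ relative to the off-caustic case exactly because the Airy prefactor ceases to decay there.

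The main obstacle is the first step: the leading-order asymptotic of $P$ must agree with $P_{GB}$ precisely at $\eta=0$, which depends on the exact values of the $\alpha$-dependent phase factors that emerge from the Airy asymptotic. This matching is, in effect, the analytic content of the Gaussian beam construction being consistent at a fold caustic. Once it is established, the remaining work is routine Taylor expansion combined with the standard Airy decay estimate.
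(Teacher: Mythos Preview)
Your proposal is correct and follows essentially the same route as the paper: expand $\Ai(\alpha k^{2/3}X)$ via $\widetilde{\Ai}$ to extract the leading form of $P$, Taylor-expand the amplitude $X^{1/4}$ and phase $\tfrac23 X^{3/2}$ about $\eta=0$ to match $P_{GB}$, and then combine with the pointwise Airy bound \eqref{eq:Aisest}. The only step you should spell out is the off-caustic inequality $|\Ai(k^{2/3}(x-X))|\leq C(1+k^{2/3}\delta)^{-1/4}$: it does not follow from \eqref{eq:Aisest} alone since the argument is $x-X$, not $-\delta$, but the triangle inequality $\delta\leq |x-X|+|X-x_c|\leq |x-X|+2|\eta|$ together with $2k^{2/3}|\eta|\leq \xi_0^2/2<1$ gives $1+k^{2/3}\delta\leq 2(1+k^{2/3}|x-X|)$, which is exactly what you need (the paper does this via a two-case split instead).
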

\begin{proof}
We start by proving two estimates of $X(\eta)$.
We use the inequalities $1-x\leq\sqrt{1-x}\leq 1-x/2$ which hold for $x\in[0,1]$.
The definition \eq{Xdef} 
together with the assumption on $\eta$ and the fact that $k\geq 1$,
then gives
$$
X(\eta) = 1-(\eta+\eta_0)^2\geq 
1-\left(\frac{\xi_0^2}{4}+\sqrt{1-\xi_0^2}\right)^2\geq 
1-\left(1-\frac{\xi_0^2}{4}\right)^2\geq \frac{\xi_0^2}{4}=:X_0>0.
$$
Moreover, since $x_c=\xi_0^2$,
$$
|X(\eta)-x_c| 
=|\eta| |2\eta_0-\eta|
\leq |\eta| \left|2\sqrt{1-\xi_0^2}+\frac{\xi_0^2}{4}\right|
\leq |\eta| \left|2-\xi_0^2+\frac{\xi_0^2}{4}\right|\leq 2|\eta|.
$$
Clearly, we also have $X(\eta)\leq 1$, and therefore, in summary,
\be\label{Xdiff}
 0<X_0 \leq X(\eta)\leq 1, \qquad
|X(\eta)-x_c|\leq 2|\eta|.
\ee

Next, we rewrite $P_{GB}$, adopting the 
the defintion
\be\label{Aiwuse}
\widetilde{\Ai}(z)=\frac1{2\sqrt\pi} z^{-\frac14}e^{-\frac23 z^{\frac32}},
\ee
from Lemma \ref{lem:Aiprop}. Then for $x\in\Real$,
$$
\widetilde{\Ai}(\alpha k^{2/3}x)=
\frac{k^{-1/6}}{2\sqrt\pi}e^{-i\pi/12} x^{-\frac14}e^{-\frac23 ikx^{3/2}}
\quad\Rightarrow\quad
\widetilde\Ai(\alpha k^{2/3}X)=
\widetilde\Ai(\alpha k^{2/3}x_c)\left(\frac{x_c}{X}\right)^{1/4}
e^{-\frac23ik (X^{3/2}-x_c^{3/2})},
$$
and
$$
P_{GB} = \frac{\bar\alpha k^{-1/6}}{\widetilde{\Ai}(\alpha k^{2/3}x_c)}
e^{-2ik\eta\eta_0\sqrt{x_c}}
=
\frac{\bar\alpha k^{-1/6}}{\widetilde{\Ai}(\alpha k^{2/3}X)}
\left(\frac{x_c}{X}\right)^{1/4}
e^{-\frac23ik (X^{3/2}-x_c^{3/2})-2ik\eta\eta_0\sqrt{x_c}}
=:
\frac{\bar\alpha k^{-1/6}g(\eta)}{\widetilde{\Ai}(\alpha k^{2/3}X)}.
$$
We get
\begin{align*}
 k^{1/6}|P-P_{GB}| &
=
\left|\frac{1}{\Ai(\alpha k^{2/3}X)}-
\frac{g(\eta)}{\widetilde{\Ai}(\alpha k^{2/3}X)}
\right|
\leq
\left|\frac{1}{\Ai(\alpha k^{2/3}X)}-\frac{1}{\widetilde\Ai(\alpha k^{2/3}X)}\right|
+\left|\frac{1-g(\eta)}{\widetilde\Ai(\alpha k^{2/3}X)}\right|
\\
&=
\frac{1}{|\widetilde\Ai(\alpha k^{2/3}X)|}
\left(
\left|\frac{\widetilde\Ai(\alpha k^{2/3}X)-\Ai(\alpha k^{2/3}X)}{\Ai(\alpha k^{2/3}X)}\right|
+|1-g(\eta)|\right).
\end{align*}
We can then estimate $R_2$ as
\begin{align}\label{R2split}
 |R_2| &\leq
 \frac{|\Ai(k^{\frac23}(x-X))|}{|\widetilde\Ai(\alpha k^{2/3}X)|}
\left(
\left|\frac{\widetilde\Ai(\alpha k^{2/3}X)-\Ai(\alpha k^{2/3}X)}{\Ai(\alpha k^{2/3}X)}\right|
+|1-g(\eta)|\right).
\end{align}
We will now study the different parts of this expression separately.

\begin{itemize}

\item {\bf Estimate of $\left|\frac{\widetilde\Ai(\alpha k^{2/3}X)-\Ai(\alpha k^{2/3}X)}{\Ai(\alpha k^{2/3}X)}\right|$.}

\noindent This is given directly by
\eq{eq:Aidiff2} in
Lemma \ref{lem:Aiprop} with $s_0=X_0$,
as then $k^{2/3}X(\eta)\geq k^{2/3}X_0\geq s_0$. We get
\begin{align*}
\left|\frac{\widetilde\Ai(\alpha k^{2/3}X)-\Ai(\alpha k^{2/3}X)}{\Ai(\alpha k^{2/3}X)}\right|
&\leq C_1 |k^{2/3}X|^{-3/2}\leq C_1X_0^{-3/2} k^{-1}
=: D_1 k^{-1},
\end{align*}
where $C_1$ is the constant in \eq{eq:Aidiff2}.

\item {\bf Estimate of $|1-g(\eta)|$.}

\noindent Using Taylor's formula for $x\mapsto x^{3/2}$
around $x=X(0)
=1-\eta_0^2=\xi_0^2=x_c$, we compute
\begin{align*}
  X(\eta)^{3/2}&=X(0)^{3/2}+\frac32X(0)^{1/2}(X(\eta)-X(0))
  +R (X(\eta)-X(0))^2,\\
  &=x_c^{3/2}+\frac32\sqrt{x_c}(2\eta\eta_0-\eta^2)
  +R (X(\eta)-X(0))^2,\qquad |R|\leq \sup_{\xi\geq X_0}\frac38 \xi^{-1/2}
  =\frac3{8 \sqrt{X_0}}.
\end{align*}
Therefore, 
$$
\frac23(X^{3/2}-x_c^{3/2})
+2\eta\eta_0\sqrt{x_c}
=
-\sqrt{x_c}\eta^2
  +\frac23 R (X(\eta)-x_c)^2,
$$
and consequently, by \eq{Xdiff},
$$
\left|
\frac23(X^{3/2}-x_c^{3/2})
+2\eta\eta_0\sqrt{x_c}
\right| 
\leq \left(
|\eta|^2+
\frac{|X(\eta)-x_c|^2}{4 \sqrt{X_0}}
\right)
\leq \left(
|\eta|^2+
\frac{|\eta|^2}{\sqrt{X_0}}
\right)=:D_2 \eta^2.
$$
This gives us
\begin{align*}
\left|
1-g(\eta)
\right|
&\leq
\left|
1-
e^{-ik\left(
\frac23(X^{3/2}-x_c^{3/2})
+2\eta\eta_0\sqrt{x_c}\right)}
\right|
+
\left|
\left(\frac{x_c}{X(\eta)}\right)^{1/4}-1
\right|
\\
&\leq 
k\left|
\frac23(X^{3/2}-x_c^{3/2})
+2\eta\eta_0\sqrt{x_c}
\right|
+
\left|
\left(1+\frac{2\eta}{X_0}\right)^{1/4}-1
\right|\\
&\leq
D_2k\eta^2 + \frac{1}{2X_0}\eta
\leq D_3(1+k^2\eta^2)k^{-1},
\end{align*}
for $D_3=\max(D_2,(2X_0)^{-1})$.
\item {\bf Estimate of $\frac{|\Ai(k^{\frac23}(x-X))|}{|{\widetilde\Ai}(\alpha k^{2/3}X)|}$.}

\noindent 
We divide this into three subcases.
Suppose first that 
$$|x-x_c|> |X(\eta)-x_c|.$$ 
Then by \eq{Xdiff},
\begin{align*}
k^{2/3}|x-X(\eta)|&\geq k^{2/3}|x-x_c|- k^{2/3}|X(\eta)-x_c|
\geq k^{2/3}|x-x_c|-2k^{2/3}|\eta|\\
&\geq k^{2/3}|x-x_c|-\frac{\xi_0^2}{2}.
\end{align*}
By \eq{eq:Aisest} in Lemma \ref{lem:Aiprop} and \eq{Aiwuse},
\begin{align*}
\frac{|\Ai(k^{\frac23}(x-X))|}{|{\widetilde\Ai}(\alpha k^{2/3}X)|}
&\leq
2\sqrt{\pi}k^{1/6}|X|^{1/4}C_2(1+|k^{\frac23}(x-X)|)^{-1/4}
\\
&\leq 2\sqrt{\pi}k^{1/6}C_2(1-\xi^2_0/2+k^{\frac23}|x-x_c|)^{-1/4}\leq 2\sqrt{\pi}C_2|x-x_c|^{-1/4},
\end{align*}
where $C_2$ is the constant in \eq{eq:Aisest}.
On the other hand, if
$$|X(\eta)-x_c|\geq |x-x_c|>0,$$ 
then by \eq{Xdiff},
\begin{align*}
k^{2/3}\leq \frac{\xi_0^2}{4|\eta|}
\leq \frac{\xi_0^2}{2|X(\eta)-x_c|}
\leq \frac{\xi_0^2}{2|x-x_c|},
\end{align*}
and we obtain the same estimate as above, via
\begin{align*}
\frac{|\Ai(k^{\frac23}(x-X))|}{|{\widetilde\Ai}(\alpha k^{2/3}X)|}
&\leq
2\sqrt{\pi}k^{1/6}|X|^{1/4}C_2(1+|k^{\frac23}(x-X)|)^{-1/4}
\leq 2\sqrt{\pi}k^{1/6}C_2
\\
&\leq
2\sqrt{\pi}C_2\left(\frac{\xi_0^2}{2|x-x_c|}\right)^{1/4}
\leq 2\sqrt{\pi}C_2|x-x_c|^{-1/4}.
\end{align*}
Finally, when $x=x_c$ (caustic case) we can not get better than
\begin{align*}
\frac{|\Ai(k^{\frac23}(x-X))|}{|{\widetilde\Ai}(\alpha k^{2/3}X)|}
&\leq
2\sqrt{\pi}k^{1/6}|X|^{1/4}C_2(1+|k^{\frac23}(x-X)|)^{-1/4}
\leq
2\sqrt{\pi}C_2k^{1/6}.
\end{align*}
In summary, we have with $D_4 =2\sqrt{\pi}C_2$,
\begin{equation}\label{diffX}
\frac{|\Ai(k^{\frac23}(x-X))|}{|{\widetilde\Ai}(\alpha k^{2/3}X)|}
\leq
D_4
\begin{cases}
k^{1/6}, & x=x_c,\\
|x-x_c|^{-1/4}, & x<x_c.
\end{cases}
\end{equation}
\end{itemize}
We can now put the estimates together and apply them to $R_2$ in \eq{R2split}.
We get
$$
 |R_2|\leq 
 \left(D_1k^{-1} + D_3(1+k^2\eta^2)k^{-1}
 \right)
 D_4
 \begin{cases}
k^{1/6}, & x=x_c,\\
|x-x_c|^{-1/4}, & x<x_c,
\end{cases}\ \ 
\leq M(1+k^2\eta^2) k^{r},
$$
where 
$$
M=\max(D_1,D_3)D_4
 \begin{cases}
1, & x=x_c,\\
|x-x_c|^{-1/4}, & x<x_c.
\end{cases}
$$
This proves the lemma.
\end{proof}

\subsection{Estimate of $R_3$ at the caustic.}

This is the main estimate. Here we assume that $x=x_c$.
\begin{lem}\label{R3lemma}
For $x=x_c$
there is a constant $M$ independent of
$\eta$ and $k\geq 1$, such that
$$
  |R_3| \leq M(1+k^2\eta^2)k^{-5/6}, \qquad \text{when}\ |\eta| \leq \frac{\xi_0^2}{4}k^{-2/3}.
$$
\end{lem}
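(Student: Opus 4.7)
The plan is to reduce $R_3$ to a difference of oscillatory integrals, exploit an algebraic cancellation between the Gaussian beam's amplitude and phase corrections, and then use the Airy ODE to extract the sharp $k^{-5/6}$ rate.

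At $x = x_c$ we have $\delta = 0$, and from $\xi_0^2+\eta_0^2 = 1$ a direct computation gives $x_c - X(\eta) - \eta^2 = 2\eta\eta_0$. Writing $z := 2k^{2/3}\eta\eta_0$, the hypothesis $|\eta| \leq \xi_0^2 k^{-2/3}/4$ gives $|z| \leq 1/2$, so all $\Ai^{(j)}(z)$ are uniformly bounded. With $g(\theta) := \sqrt{q(\xi_0)/q(\xi_0+\theta)}$ and $R := c_0(1+|\eta|^{1/2})$, Lemma~\ref{IAiry} (with $p=0$) and Lemma~\ref{Itail} (applied to $r = g$ with phase $\phi_g$) yield, up to $O(k^{-N})$ for any $N$,
\[
   2\pi\Ai(z) = k^{1/3}\int \psi(\theta/R) e^{ik\phi_a(0,\theta,\eta)} d\theta, \quad
   2\pi I(\eta,x_c,k) = k^{1/3}\int \psi(\theta/R) g(\theta) e^{ik\phi_g(0,\theta,\eta)} d\theta.
\]
Since $|P_{GB}|$ is bounded, the task reduces to showing $|E| \leq C(1+k^2\eta^2)k^{-1}$, where
\[
   E := k^{1/3}\int \psi(\theta/R)\bigl[e^{ik\phi_a(0,\theta,\eta)} - g(\theta) e^{ik\phi_g(0,\theta,\eta)}\bigr] d\theta.
\]

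Using $\phi_g - \phi_a = \tfrac12 m_{11}(\xi_0+\theta)\theta^4$, I would Taylor expand $g(\theta) = 1 + g'(0)\theta + \theta^2 \tilde g(\theta)$ and $e^{ik(\phi_g-\phi_a)} = 1 + \tfrac{ik}{2}m_{11}(\xi_0)\theta^4 + \rho(k,\theta)$, where $\rho$ gathers the $\theta^5$ remainder (from $m_{11}(\xi_0+\theta)-m_{11}(\xi_0)$) and the quadratic-in-$k(\phi_g-\phi_a)$ term of order $k^2\theta^8$. A direct computation from $m_{11}(s) = q'(s)/(2q(s))$ and $g'(0) = -q'(\xi_0)/(2q(\xi_0))$ reveals the key identity
\[
   g'(0) = -m_{11}(\xi_0),
\]
which combines the linear amplitude correction and the quartic phase correction into
\[
   E_{\mathrm{lead}} = m_{11}(\xi_0)\, k^{1/3}\int \psi(\theta/R)\bigl[\theta - \tfrac{ik}{2}\theta^4\bigr] e^{ik\phi_a(0,\theta,\eta)} d\theta.
\]
Applying Lemma~\ref{IAiry} with $p = 1$ and $p = 4$, together with the Airy ODE identity $\Ai^{(4)}(z) = 2\Ai'(z) + z^2\Ai(z)$, the $\Ai'(z)$ contributions cancel and one obtains $|E_{\mathrm{lead}}| \leq C|m_{11}(\xi_0)|\,z^2\,|\Ai(z)|/k^{1/3}$. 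Since $z^2 = 4k^{4/3}\eta^2\eta_0^2$, this is of size $k\eta^2$, comfortably within the target.

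The remainder $E - E_{\mathrm{lead}}$ consists of terms indexed by a monomial $\theta^p$ with $p\in\{2,5,8\}$, multiplied either by $e^{ik\phi_a}$ or by $e^{ik\phi_g}$. For each, I would apply Lemma~\ref{IAiry} (or, after rescaling $\zeta = k^{1/3}\theta$, Lemma~\ref{Imainpartairy}) to the corresponding $\theta^p$-integral, and then invoke the Airy identity $\Ai''(z) = z\Ai(z)$ repeatedly to express the high derivatives $\Ai^{(p)}(z)$ with extra factors of $z$. This converts what is naively $O(k^{-2/3})$ into $O(|\eta|) + O(k^{2/3}\eta^2)$; both fit into $(1+k^2\eta^2)k^{-1}$ via the elementary bounds $|\eta| \leq k^{-1} + k\eta^2$ and $k^{5/3}\eta^2 \leq 1+k^2\eta^2$, valid for $k\geq 1$ and $|\eta|\leq \xi_0^2 k^{-2/3}/4$. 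Integrals carrying $e^{ik\phi_g}$ rather than $e^{ik\phi_a}$ are handled by the same non-stationary-phase argument, since $|e^{ik\phi_g}| \leq 1$ from $\Im\phi_g \geq 0$, and $\phi_g$ shares with $\phi_a$ the gradient bound of Lemma~\ref{phiest}. The main obstacle is precisely this bookkeeping: without the Airy ODE cancellations each remainder would only produce the suboptimal rate $k^{-2/3}$, and the sharp $k^{-5/6}$ rate emerges only after pushing the Taylor expansion one step further than superficially necessary and systematically using the Airy ODE to absorb the resulting higher derivatives.
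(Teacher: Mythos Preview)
Your strategy is exactly the paper's: split off the tail via Lemma~\ref{Itail}, Taylor expand both the amplitude $g$ and the exponential factor $e^{ik(\phi_g-\phi_a)}$, identify the crucial cancellation $g'(0)+m_{11}(\xi_0)=0$ (which the paper records as $v_0'(0)+v_1(0)=0$), and treat the resulting constant-coefficient monomials via Lemma~\ref{IAiry} together with the Airy ODE. Your $E_{\rm lead}$ calculation is correct.

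The gap is in the remainder. You stop the exponential expansion at first order, so that your $\rho$ contains the full quadratic exponential remainder $(e^{ikW}-1-ikW)=(ikW)^2 Z_{\rm rem}(kW)$ with $W=\tfrac12 m_{11}(\xi_0+\theta)\theta^4$. To run the Airy-ODE trick on this $k^2\theta^8$ piece you must first strip off the value at $\theta=0$; the resulting $\theta^9$ Taylor remainder then carries a $k$-\emph{dependent} coefficient built from $[Z_{\rm rem}(kW(\theta))-Z_{\rm rem}(0)]/\theta$. When you feed this into Lemma~\ref{Imainpartairy} you need the $W^{2,\infty}$ norm of the rescaled coefficient, and the chain-rule derivative $\partial_\theta Z_{\rm rem}(kW)=Z_{\rm rem}'(kW)\cdot kW_\theta$ is of size $\sim k\theta^3/(1+k\theta^4)$, which peaks at $\theta\sim k^{-1/4}$ with value $\sim k^{1/4}$. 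Thus the rescaled norm is $\sim k^{1/4}$ rather than $O(1)$, and you obtain only $k^2\cdot k^{-3}\cdot k^{1/4}=k^{-3/4}$ instead of the required $k^{-1}$. The same issue arises if you keep the $p=2$ term with phase $\phi_g$: a non-stationary-phase estimate there produces $k^{-2/3}$, and you cannot invoke Lemma~\ref{IAiry} (which needs $\phi_a$ and constant coefficient) to upgrade it.

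The paper avoids this by pushing the exponential expansion one order further (to cubic). Then the $\theta^8$ term carries the $k$-\emph{independent} coefficient $v_2(\theta)=r(\theta)m_{11}(\xi_0+\theta)^2$, which Taylor-expands in $\theta$ to a constant $v_2(0)$ (absorbed into $I_S$ and handled with the Airy ODE for $\Ai^{(8)}$) plus a $\theta^9$ remainder $I_{V_{0,2}}$ whose coefficient is $k$-free and hence yields $k^{-1}$ directly from Lemma~\ref{Imainpartairy}. The cubic remainder $I_Z$, now at the $(k^3,\theta^{12})$ level, is controlled by Lemma~\ref{Imainpartairy} together with Lemma~\ref{lem:Westimate}, which bounds the rescaled $W^{3,\infty}$ norm of the $k$-dependent factor; at that depth the extra powers of $\theta$ exactly compensate and one again obtains $k^{-1}$. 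In short, your outline is correct but the exponential must be expanded to cubic order (and correspondingly $v_0,v_1,v_2$ to orders $2,1,0$) before the bookkeeping closes.
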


\begin{proof}
We consider 
 $\rho = 2k^{2/3}\eta_0\eta$, which amounts to taking $\delta=0$ in \eq{rhodef}.
By the assumption on $\eta$ and $k$ it is bounded as
\begin{equation}\label{rhobound}
  |\rho| \leq 2k^{2/3}|\eta| \leq \xi_0^2/2\leq \frac12.
\end{equation}
Moreover, since $k^{2/3}(x_c-X-\eta^2)=\rho$ and 
as before, $|P_{GB}|\leq 2(\pi\xi_0)^{1/2}$, we get
\begin{equation}\label{R3est}
|R_3|
= k^{1/6}|P_{GB}(k,\eta)|\cdot |\Ai(k^{\frac23}(x_c-X-\eta^2))-I(\eta,x_c,k)|
\leq 
Ck^{1/6}|\Ai(\rho)-I(\eta,x_c,k)|.
\end{equation}
Hence, we need to estimate $|\Ai(\rho)-I|$.

Let 
\be\label{rdef}
r(\theta) = \frac{1}{2\pi}\sqrt{\frac{q(\xi_0)}{q(\xi_0+\theta)}}.
\ee
As in the proof of Lemma~\ref{vgbest} we then
use the fact that
$I=\lim_{t\to\infty} I_t$
where $I_t$ is defined and divided as
\begin{align*}
I_t&=
k^{1/3}
 \int\psi(\theta/t)
 r(\theta)
e^{ik\phi_g(0,\theta,\eta)}
d\theta
=
k^{1/3}
 \int
 \psi(\theta/R)\psi(\theta/t)r(\theta)
e^{ik\phi_g(0,\theta,\eta)}
d\theta
\\
&\ \ +
k^{1/3}
 \int
 (1-\psi(\theta/R))\psi(\theta/t)r(\theta)
e^{ik\phi_g(0,\theta,\eta)}
d\theta 
  =: I_{\rm main}+I_{t,\rm tail}.
\end{align*}
 With $c_0$ as in Lemma \ref{phiest} we choose here 
$R=3c_0/2$, independent of $\eta$, which implies that
for all $\eta$ which we consider,
\begin{equation}\label{Rbound}
  c_0(1+|\eta|^{1/2})\leq 
  c_0(1+\xi_0/2)\leq \frac32 c_0 =R.
\end{equation}
Moreover, we take
$t\geq 2R=3c_0$ such that $\psi(\theta/R)\psi(\theta/t)=\psi(\theta/R)$.
To analyze $I_{\rm main}$ we then
first note that it can be written as
$$
I_{\rm main}=
k^{1/3}
 \int
 \psi(\theta/R)\tilde{r}(\theta)
e^{ik\phi_a(0,\theta,\eta)}
d\theta,
\qquad \tilde{r}(\theta) = r(\theta)e^{ik\frac12m_{11}(\xi_0+\theta)\theta^4}.
$$
We next expand $\tilde{r}$ in terms of $\theta$, first using
the Taylor expansion of $\exp(iz)$,
$$
e^{iz} = 1 + iz +\frac{(iz)^2}{2} + (iz)^3 Z(z),
\qquad  Z(z) = \frac{1}{2i}\int_0^1 e^{isz}(1-s)^2ds.
$$
This gives
\begin{align*}
 \tilde{r}(\theta)
 &=r(\theta)\left(1+ \frac{ik}2 m_{11}(\xi_0+\theta)\theta^4
 +\frac{(ik)^2}8 m_{11}(\xi_0+\theta)^2\theta^8
 \right)\\
 &\ \ \ 
 +\frac{(ik)^3}8r(\theta)Z\left(\frac{k}2 m_{11}(\xi_0+\theta)\theta^4\right)
 m_{11}(\xi_0+\theta)^3\theta^{12}.
\end{align*}
Furthermore, let
$$
  v_\ell(\theta):= r(\theta)m_{11}(\xi_0+\theta)^\ell,
$$
and Taylor expand these functions as
$$
v_\ell(\theta) = \sum_{j=0}^p \frac{v_\ell^{(j)}(0)\theta^j}{j!}+
V_{p,\ell}(\theta)\theta^{p+1},\qquad
V_{p,\ell}(\theta)=
  \frac{1}{p!}
\int_0^1 v_\ell^{(p+1)}(t\theta)(1-t)^p dt.
$$
Then
\begin{align*}
 \tilde{r}(\theta)
 &=v_0(0)+v_0'\theta +\frac12v_0''(0)\theta^2  + V_{2,0}(\theta)\theta^3 \\
 &\ \ +\frac{ik}2 v_1(0)\theta^4
 +\frac{ik}2 v_1'(0)\theta^5
 +\frac{ik}2 V_{1,1}(\theta)\theta^6\\
 &\ \ +\frac{(ik)^2}8 v_2(0)\theta^8+\frac{(ik)^2}8V_{0,2}(\theta)\theta^9
 +\frac{(ik)^3}8v_3(\theta)Z\left(k\frac12 m_{11}(\xi_0+\theta)\theta^4\right)\theta^{12}.
\end{align*}
From this expansion of $\tilde{r}$ we now get a corresponding expansion of
$I_{\rm main}$,
\begin{align}\label{Idivide}
I_{\rm main}  &=
I_S +
I_{V_{2,0}}+\frac{1}2I_{V_{1,1}}+\frac{1}8I_{V_{0,2}}
+I_Z,\\
I_{S}  &=
v_0(0)I_0+v_0'(0)I_1 +\frac12v_0''(0)I_2
+v_1(0)\frac{ik}2I_4
 +\frac{ik}2 v_1'(0)I_5
+\frac{(ik)^2}8 v_2(0)I_8\nonumber,
\end{align}
where
$$
I_p = k^{1/3}
 \int
 \psi(\theta/R)\theta^p
e^{ik\phi_a(0,\theta,\eta)}
d\theta,\qquad
I_{V_{p,\ell}} =
(ik)^\ell
 k^{1/3}
 \int
 \psi(\theta/R)V_{p,\ell}(\theta)\theta^{3\ell+3}
e^{ik\phi_a(0,\theta,\eta)}
d\theta,
$$
and
$$
I_Z = 
\frac{(ik)^3}{8}
 k^{1/3}
 \int
 \psi(\theta/R)w(\theta,k)\theta^{12}
e^{ik\phi_a(0,\theta,\eta)}
d\theta,
$$
with
$$
 w(\theta,k) = v_3(\theta)z(\theta,k),
\qquad
 z(\theta,k) = Z\left(\frac{k}{2}m_{11}(\xi_0+\theta)\theta^4\right).
$$

We will next show that the last four terms in \eq{Idivide} are 
at most of size $O(1/k)$.
To see this, we note that
by Lemma~\ref{qest}
and Lemma~\ref{m11est}, both $r$ and
$m_{11}(\xi_0+\cdot)$ belong to $W^{n,\infty}(\Real)$
for all $n$, $\xi_0$,
and their $W^{n,\infty}$-norms are bounded independent of $\xi_0$.
By \eq{Wnineq1} the same is true for $v_\ell$, for all $\ell$.
Therefore, by \eq{Wnineq2},
\begin{align*}
  ||V_{p,\ell}||_{W^{n,\infty}(\Real)} &\leq 
  \frac{1}{p!}
\int_0^1 
  ||v^{(p+1)}_{\ell}(\cdot\ t)||_{W^{n,\infty}(\Real)}
  (1-t)^p dt
  \leq 
  \frac{1}{p!}
\int_0^1 
\max(1,t^n)
  ||v^{(p+1)}_{\ell}||_{W^{n,\infty}(\Real)}
  (1-t)^p dt\\
  &\leq 
  \frac{1}{p!}
  ||v_{\ell}||_{W^{n+p+1,\infty}(\Real)},
\end{align*}
showing that also $V_{p,\ell}\in W^{n,\infty}(\Real)$ for all $n,p,\ell$.
Since \eq{etadeltasmall}
is satisfied
under the assumptions on $\eta$, $\delta$ and $k$,
we can use
Lemma~\ref{Imainpartairy} with  $n = \lceil(3\ell+4)/5\rceil$
to estimate
$$
  |I_{V_{p,\ell}}|\leq  C k^{\ell-(3\ell+3)/3}||V_{p,\ell}(\cdot/k^{\frac13})||_{W^{n,\infty}(\Real)}
  \leq C k^{-1}\max(1, k^{-\frac{n}3})||V_{p,\ell}||_{W^{n,\infty}(\Real)}\leq Ck^{-1}.
$$
For $I_Z$ we first observe that
\begin{align*}
z(\theta/k^{\frac13},k)=Z
 \left(\frac{k^{-\frac13} }{2}m_{11}(\xi_0+k^{-\frac13}\theta)\theta^4\right).
\end{align*}
By appealing to
Lemma~\ref{lem:Westimate} with $\varepsilon=k^{-1/3}$ 
we conclude that
$||z(\cdot/k^{\frac13},k)||_{W^{3,\infty}(\Real)}$
is bounded uniformly for $k\geq 1$.
Consequently, we can use Lemma~\ref{Imainpartairy} with $n=\lceil (12+1)/5\rceil =3$ together with
\eq{Wnineq1}
and \eq{Wnineq2} to show that
$$
 |I_{Z}|\leq  Ck^{3-12/3}||w(\cdot/k^{\frac13},k)||_{W^{3,\infty}(\Real)}
 \leq Ck^{-1}\max(1,k^{-1})||v_3||_{W^{3,\infty}(\Real)}
 ||z(\cdot/k^{\frac13},k)||_{W^{3,\infty}(\Real)}
 \leq C k^{-1}.
$$
We have thus proved that
\begin{equation}\label{ImainSest}
|I_{\rm main}-I_S|\leq C k^{-1}.
\end{equation}
From Lemma~\ref{IAiry} we know that
 $I_p\approx 2\pi
{\rm Ai}^{(p)}(\rho)i^p/k^{p/3}$
and we therefore introduce the approximation $\tilde{I}_S$ of $I_S$
obtained by replacing $I_p$ with the corresponding Airy function,
\begin{align*}
\tilde{I}_S &= 
v_0(0)2\pi{{\rm Ai}(\rho)}+
v_0'(0){\frac{2\pi i}{k^{1/3}}{\rm Ai}'(\rho)} +
\frac12v_0''(0)
{\frac{2\pi i^2}{k^{2/3}}{\rm Ai}^{(2)}(\rho)}
\\
&\ \ \ +v_1(0)\frac{ik}2
{\frac{2\pi i^4}{k^{4/3}}{\rm Ai}^{(4)}(\rho)}
 +\frac{ik}2 v_1'(0)
 {\frac{2\pi i^5}{k^{5/3}}{\rm Ai}^{(5)}(\rho)}
+\frac{(ik)^2}8 v_2(0){\frac{2\pi i^8}{k^{8/3}}{\rm Ai}^{(8)}(\rho)}.
\end{align*}
By \eq{Rbound} we can use Lemma~\ref{IAiry}
with large enough $n$ 
to obtain
$$
  \left|I_p-{\frac{2\pi i^p}{k^{p/3}}{\rm Ai}^{(p)}(\rho)}\right|
  \leq Ck^{-3},
$$
where $C$ is uniform in $\rho$. 
Then
\begin{equation}\label{ISStest}
  |I_S-\tilde{I}_S| \leq C(|v_0(0)|+|v_0'(0)|+
  |v_0''(0)|+k|v_1(0)|+k|v_1'(0)|+k^2|v_2(0)|)k^{-3}\leq C'k^{-1},
\end{equation}
as $||v_\ell||_{W^{n,\infty}(\Real)}\leq C$ uniformly in $\xi_0$.

The next step is to show that $\tilde{I}_S$ is close to ${\rm Ai}(\rho)$.
Upon using the identities for $\Ai^{(m)}$ with $m=2,4,5,8$ given
in Remark~\ref{polyexp} we can simplify the expression
for $\tilde{I}_S$ as follows
\begin{align*}
\frac{\tilde{I}_S}{2\pi} 
&= 
v_0(0){{\rm Ai}(\rho)}+
iv_0'(0)k^{-\frac13}{{\rm Ai}'(\rho)} 
-\frac12v_0''(0)k^{-\frac23}
{{\rm Ai}^{(2)}(\rho)}
\\
&\ \ \ +\frac{i}2v_1(0)
k^{-\frac13}{{\rm Ai}^{(4)}(\rho)}
 -\frac{1}2 v_1'(0)
 k^{-\frac23} {{\rm Ai}^{(5)}(\rho)}
-\frac{1}8 v_2(0)k^{-\frac23}{{\rm Ai}^{(8)}(\rho)}
\\
&=
v_0(0){{\rm Ai}(\rho)}+
iv_0'(0)k^{-\frac13}{{\rm Ai}'(\rho)} 
-\frac12v_0''(0)k^{-\frac23}
\rho{{\rm Ai}(\rho)}
+\frac{i}2v_1(0)
k^{-\frac13}
(\rho^2{{\rm Ai}(\rho)}+
2{{\rm Ai}'(\rho)})
\\
&\ \ \ 
 -\frac{1}2 v_1'(0)
 k^{-\frac23} 
(4\rho{{\rm Ai}(\rho)}
 +\rho^2{{\rm Ai}'(\rho)})
-\frac{1}8 v_2(0)k^{-\frac23}
((\rho^4+28\rho){{\rm Ai}(\rho)}+
12\rho^2{{\rm Ai}'(\rho)})
\\
&=
\left(v_0(0)
-\frac12v_0''(0)k^{-\frac23}\rho
+\frac{i}2v_1(0)k^{-\frac13}\rho^2
-2 v_1'(0)k^{-\frac23}\rho
-\frac{1}8 v_2(0)k^{-\frac23}(\rho^4+28\rho)
\right){{\rm Ai}(\rho)}
\\
&\ \ \ 
+\left(iv_0'(0)k^{-\frac13}
+iv_1(0)k^{-\frac13}
-\frac12 v_1'(0)k^{-\frac23} \rho^2
-\frac{3}2 v_2(0)k^{-\frac23}\rho^2\right)
{{\rm Ai}'(\rho)}.
\end{align*}
From \eq{rdef}, \eq{qdef} and \eq{m11}, we obtain
$$
  v_0(0) = \frac{1}{2\pi},\qquad v'_0(0)=-\frac{q'(\xi_0)}{4\pi q(\xi_0)},\qquad
  v_1(0)=\frac{2i-2\xi_0\beta}{4\pi q(\xi_0)}
  =\frac{q'(\xi_0)}{4\pi q(\xi_0)}.
$$
Hence, $v'_0(0)+v_1(0)=0$ and
\begin{align*}
 \tilde{I}_S &= 
\left(1
+{i\pi}k^{-\frac13}\rho^2v_1(0)
-\pi k^{-\frac23} \rho\left(v_0''(0)+
4v_1'(0)
+\frac{1}4 v_2(0)(\rho^3+28)\right)
\right){{\rm Ai}(\rho)}
\\
&\ \ \ 
-\pi k^{-\frac23} \rho^2 \left(
v_1'(0)
+3v_2(0)\right)
{{\rm Ai}'(\rho)}.
\end{align*}
Since $\rho$ is bounded by \eq{rhobound} and
$\Ai$ is smooth
around $\rho=0$,
this shows that
\begin{align}\label{IStAiest}
  |\tilde{I}_S-{{\rm Ai}(\rho)}|\leq 
  C\left[k^{-\frac13}\rho^2 +
  k^{-\frac23}|\rho|\right]
  \leq
  C'\left[k|\eta|^2 +
|\eta| \right]
  =
  C'\left[|k\eta|^2 +
|k\eta| \right]k^{-1}.
\end{align}
Note that the dependence on $k^2\eta^2$
which appears here, also appears in
the estimate of $R_2$ in Lemma~\ref{R2lemma}.

It remains to estimate $I_{t,\rm tail}$.
By \eq{Rbound} we get from
 Lemma~\ref{Itail} with $p=0$ and $n=2$, for all $t>R$, that
\begin{equation}\label{Itailest}
  |I_{t,\rm tail}|\leq 
  C_2k^{1/3-2}
  ||r||_{W^{2,\infty}(\Real)}
  \leq Ck^{-5/3},
\end{equation}
where the constant is independent of $t$.
In conclusion, 
using \eq{ImainSest}, \eq{ISStest}, \eq{IStAiest} and \eq{Itailest}
we have shown that
\begin{align*}
  |I(\eta,x_c,k) - {{\rm Ai}(\rho)}|  
  &\leq 
  |I_{\rm main} - I_S|+
   |I_S- \tilde{I}_S|
   +|\tilde{I}_S - {{\rm Ai}(\rho)}|
   +\lim_{t\to\infty} |I_{t,\rm tail}|
   \\
   &\leq
     C'\left[1+|k\eta|^2+|k\eta| + k^{-2/3}\right]k^{-1}
\leq
     C''\left[1+|k\eta|^2\right]k^{-1}.
\end{align*}
Together with \eqref{R3est}
this concludes the proof of Lemma~\ref{R3lemma}. 

Finally note that, away from the caustic point, i.e. $x<x_c$,
the method used here to estimate $R_3$ will not give sharp results;
if the stationary phase method is applied directly to $\phi_g$
extra decay in $k$ follows.
\end{proof}

\section{Properties of the Airy function}
\label{section:Airy}

Here we show some known properties of the Airy function and we
derive a few consequences in two lemmas. A more complete source
for information about Airy functions
is \cite{NIST:DLMF}, which we frequently cite below.
We consider the
 Airy function of the first kind $\Ai$ and second kind $\Bi$.
\begin{itemize}
\item[(P1)] The Airy functions are linearly independent solutions of
the Airy differential equation
\be\label{eq:Airydiff}
  \Ai''(z) = z\Ai(z),\qquad
  \Bi''(z) = z\Bi(z).
\ee
\item[(P2)] $\Ai$ and $\Ai'$ only have zeros on the negative real line. The
zeros do not coincide.
$\Ai(s)$ is positive and decreasing for $s\geq 0$.
\item[(P3)] $\Bi$ and $\Bi'$ 
also only have zeros on the negative real line. 
The
zeros do not coincide.
 $\Bi(s)$ is positive and increasing for $s\geq 0$.
\item[(P4)] Let
\be\label{Aiassympdef}
  \widetilde\Ai(z) := \frac12 \pi^{-\frac12}z^{-\frac14}e^{-\frac23 z^{\frac32}}.
\ee
Then, for real $s>0$,
$$
 \widetilde\Ai(-s) = \frac12
\pi^{-\frac12}s^{-\frac14}
\left(
\cos\left(\frac23 s^{\frac32}-\frac{\pi}{4}\right)+
i\sin\left(\frac23 s^{\frac32}-\frac{\pi}{4}\right)
\right),
$$
and it follows easily 
from \cite[Section 9.7 (ii,iii)]{NIST:DLMF} that
\be\label{eq:NIST1}
  \left|\Ai(s)-\widetilde\Ai(s)\right|
  \leq Cs^{-3/2}  \left|\widetilde\Ai(s)\right|,\qquad
  \left|\Ai(-s)-2\Re\widetilde\Ai(-s)\right|
  \leq Cs^{-3/2}
\ee
\be\label{eq:NIST2}
  \left|\Ai'(s)+\sqrt{s}\widetilde\Ai(s)\right|
  \leq Cs^{-1}  \left|\widetilde\Ai(s)\right|,\qquad
  \left|\Ai'(-s)-2\sqrt{s}\Im\widetilde\Ai(-s)\right|
  \leq Cs^{-1}.
\ee
 
\end{itemize}

We can now prove the following lemmas.

\begin{lem}\label{lem:Aiprop}
Let $\beta=e^{i\theta}$ with $|\theta|\leq \pi/3$.
There is a constant $C$ such that
\begin{align}
\left|\Ai(\beta s)-\widetilde\Ai(\beta s)\right| &\leq 
  Cs^{-3/2}\left|\widetilde\Ai(\beta s)\right|,\qquad s>0.\label{eq:Aidiff1}
\end{align}
Moreover, for each $s_0>0$ there is a constant $C(s_0)$ such that
\begin{align}
\left|\Ai(\beta s)-\widetilde\Ai(\beta s)\right| &\leq 
  C(s_0)s^{-3/2}\left|\Ai(\beta s)\right|, \qquad s\geq s_0.\label{eq:Aidiff2}
\end{align}
Moreover, for $s\in\Real$ and $\alpha = \exp(i\pi/3)$ there is a constant $C$ such that
\begin{align}
  |\Ai(s)| &\leq C (1+|s|)^{-1/4},\label{eq:Aisest}\\
  |\Ai'(s)| &\leq C (1+|s|)^{1/4},\label{eq:Aiprimesest}\\
  |\Ai(\alpha s)| &\geq C (1+|s|)^{-1/4},\label{eq:Aiasest}\\
|\Ai'(s) +i \sqrt{-s}\Ai(s)| &\geq 
C \begin{cases}
(1+|s|)^{1/2}|\Ai(s)|, & s\geq0,\\
(1+|s|)^{1/4}, & s<0.
\end{cases}\label{eq:Aicombest}
\end{align}
\end{lem}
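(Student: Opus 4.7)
My strategy is to deduce every estimate from the standard sectorial asymptotic expansions $\Ai(z) = \widetilde\Ai(z)(1 + O(z^{-3/2}))$ and $\Ai'(z) = -\sqrt{z}\,\widetilde\Ai(z)(1 + O(z^{-3/2}))$, which are uniform in any closed sector $|\arg z| \leq \pi - \varepsilon$. All arguments that appear in the lemma---$\beta s$ with $|\arg\beta| \leq \pi/3$ and $\alpha s$ with $\alpha = e^{i\pi/3}$---lie strictly inside such a sector, so the expansion applies. Estimate \eqref{eq:Aidiff1} is then just the first correction in this expansion restricted to the sector $|\theta| \leq \pi/3$. Estimate \eqref{eq:Aidiff2} follows at once because, for $s \geq s_0$, the factors $|\Ai(\beta s)|$ and $|\widetilde\Ai(\beta s)|$ differ by $1 + O(s^{-3/2})$, which is bounded away from zero, with the constant then permitted to depend on $s_0$.

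For the pointwise bounds \eqref{eq:Aisest} and \eqref{eq:Aiprimesest} I split at $s = 0$. For $s \geq s_0$, the asymptotics force exponential decay of $|\Ai(s)|$ and $|\Ai'(s)|$, which is stronger than the claimed polynomial bounds. For $s \leq -s_0$, \eqref{eq:NIST1}--\eqref{eq:NIST2} give $|\Ai(-t)| \leq 2|\widetilde\Ai(-t)| + Ct^{-3/2} \leq Ct^{-1/4}$ and $|\Ai'(-t)| \leq Ct^{1/4}$ for $t$ large, and the bounded-$|s|$ case is settled by continuity. The lower bound \eqref{eq:Aiasest} uses that $\alpha s$ lies off the negative real axis for every real $s$, so by (P2) $\Ai(\alpha s)$ never vanishes. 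For $s > 0$ one computes $(\alpha s)^{3/2} = i s^{3/2}$ (since $\arg(\alpha s) = \pi/3$ is the Stokes ray), hence $|\widetilde\Ai(\alpha s)| = (2\sqrt\pi)^{-1}s^{-1/4}$; \eqref{eq:Aidiff2} then transfers the lower bound to $|\Ai(\alpha s)|$. For $s < 0$, $\arg(\alpha s) = -2\pi/3$ and a parallel computation gives $|\widetilde\Ai(\alpha s)| \sim |s|^{-1/4} e^{2|s|^{3/2}/3}$, which dwarfs $(1+|s|)^{-1/4}$. The bounded-$|s|$ case again follows from continuity and nonvanishing.

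The main obstacle is \eqref{eq:Aicombest}, treated by a case split. For $s \geq 0$ one has $i\sqrt{-s} = -\sqrt{s}$, so the quantity reduces to the real expression $-\Ai'(s) + \sqrt{s}\,\Ai(s)$, where both terms are nonnegative by (P2), since $\Ai > 0$ and $\Ai' < 0$ on $[0,\infty)$. The ratio $-\Ai'(s)/\Ai(s) + \sqrt{s}$ is continuous on $[0,\infty)$, strictly positive at $s = 0$, and asymptotic to $2\sqrt{s}$ at infinity by \eqref{eq:NIST2}, which yields the claimed lower bound proportional to $(1+\sqrt{s})|\Ai(s)|$. For $s < 0$, $\Ai$ and $\Ai'$ are real-valued so
\[
|\Ai'(s) + i\sqrt{-s}\,\Ai(s)|^2 = \Ai'(s)^2 + (-s)\,\Ai(s)^2.
\]
Substituting the leading oscillatory behaviour $\Ai(-t) \sim \pi^{-1/2}t^{-1/4}\cos(\tfrac{2}{3}t^{3/2} - \pi/4)$ and $\Ai'(-t) \sim \pi^{-1/2}t^{1/4}\sin(\tfrac{2}{3}t^{3/2} - \pi/4)$ from \eqref{eq:NIST1}--\eqref{eq:NIST2} and invoking $\cos^2 + \sin^2 = 1$ gives $\Ai'(-t)^2 + t\,\Ai(-t)^2 \to \pi^{-1}t^{1/2}$ as $t \to \infty$; for bounded $|s|$ the quantity stays strictly positive because, by (P2)--(P3), $\Ai$ and $\Ai'$ share no common real zero. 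The chief delicacy is gluing the asymptotic and bounded regimes with uniform constants, and, in \eqref{eq:Aicombest}, verifying that the orthogonal cosine and sine leading terms genuinely combine without cancellation---this is precisely a Wronskian-like identity $\cos^2 + \sin^2 = 1$ applied to the rescaled leading parts of $\Ai(-t)$ and $t^{-1/2}\Ai'(-t)$.
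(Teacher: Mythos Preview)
Your proposal is correct and follows essentially the same route as the paper: both deduce everything from the sectorial asymptotic $\Ai(z)=\widetilde\Ai(z)(1+O(z^{-3/2}))$ together with (P2)--(P3) and continuity on compact sets, and both handle \eqref{eq:Aicombest} by separating real and imaginary parts and exploiting that the leading asymptotics of $\Ai(-t)$ and $\Ai'(-t)$ are a cosine/sine pair.

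The one place your argument genuinely differs is \eqref{eq:Aiasest} for $s<0$. You stay inside the asymptotic framework, noting that $\arg(\alpha s)=-2\pi/3$ lies in the sector where $\Ai\sim\widetilde\Ai$ and that $(\alpha s)^{3/2}=-|s|^{3/2}$, so $|\widetilde\Ai(\alpha s)|\sim(2\sqrt\pi)^{-1}|s|^{-1/4}e^{2|s|^{3/2}/3}$ grows exponentially---far more than needed. The paper instead invokes the connection formula $\Ai(\alpha s)=\tfrac{\bar\alpha}{2}\bigl(\Ai(-s)+i\Bi(-s)\bigr)$ and the monotonicity of $\Bi$ on $[0,\infty)$ to get the crude but sufficient bound $|\Ai(\alpha s)|\geq \Bi(0)/2$. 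Your route is self-contained within the $\Ai$-asymptotics and gives a sharper bound; the paper's route avoids appealing to the asymptotic expansion near its boundary ray $|\arg z|=2\pi/3$ and instead uses an exact identity. Both are short and valid.
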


\begin{proof}
We define $\Phi_0$ by the relation $\Ai(z)=\widetilde{\Ai}(z)\Phi_0(z)$.
It has an asymptotic expansion,
$$
  \Phi_0(z) \simeq \sum_{n=0}^\infty c_n z^{-3n/2},\qquad
  c_n = \frac{(-1)^n\Gamma(n+5/6)\Gamma(n+1/6)\left(\frac{3}{4}\right)^n}
  {2\pi n!},\qquad c_0 = 1, \qquad c_1\approx -0.104.
$$
From the estimates on $P$ and $Q$ of 
\cite[Appendix A, Lemma 7]{HakimOlivier:18} one obtains the uniform estimate
$$
\vert \Phi_0(z)-1\vert \leq \frac{\vert c_1\vert}{\vert z\vert^{3/2}},
$$
valid for $\vert\arg(z)\vert \leq\frac{\pi}{3}$. Then
(\ref{eq:Aidiff1}) follows directly with $C=|c_1|$,
$$
\left|\Ai(\beta s)-\widetilde\Ai(\beta s)\right|=
\left|\widetilde\Ai(\beta s)\right|\vert \Phi_0(\beta s)-1\vert
\leq |c_1|\frac{\left|\widetilde\Ai(\beta s)\right|}{s^{3/2}}.
$$
Now suppose $s\geq s_0>0$.
We first note that for all $s>0$,
$$
\vert \Phi_0(\beta s)\vert \geq 1-\vert \Phi_0(\beta s)-1\vert
\geq 1-
\frac{\vert c_1\vert}{s^{3/2}}.
$$
Hence, for $s\geq s_1:=(2|c_1|)^{2/3}$ we have $\vert \Phi_0(s\beta)\vert\geq 1/2$,
and since $\Ai=\widetilde{\Ai}\Phi_0$ only has zeros on the negative real line, there is a positive
infinum, 
$$
  d(s_0) := \inf_{s\geq s_0}\vert \Phi_0(\beta s)\vert \geq \min\left(\frac12,\ 
  \min_{s_0\leq s\leq s_1}\vert \Phi_0(\beta s)\vert\right) >0.
$$
Therefore, as above,
$$
\left|\Ai(\beta s)-\widetilde\Ai(\beta s)\right|
\leq 
|c_1|\frac{\left|\widetilde\Ai(\beta s)\right|}{s^{3/2}}.
\leq 
\frac{|c_1|}{d(s_0)}\frac{\left|\widetilde\Ai(\beta s)\Phi_0(\beta s)\right|}{s^{3/2}}
=
\frac{|c_1|}{d(s_0)}\frac{\left|\Ai(\beta s)\right|}{s^{3/2}},
$$
when $s\geq s_0$, proving (\ref{eq:Aidiff2}) with $C(s_0) = |c_1|/d(s_0)$.
 
Next, to show (\ref{eq:Aisest}) and (\ref{eq:Aiprimesest}) 
we note first that, for real $s>0$,
$$
  |\widetilde\Ai(s)| = 
  \left|\frac12 \pi^{-\frac12}s^{-\frac14}e^{-\frac23 s^{\frac32}}
  \right|
  \leq Cs^{-\frac14},\qquad
  |\widetilde\Ai(-s)| = 
  \left|\frac12 \pi^{-\frac12}(-s)^{-\frac14}e^{\frac23 is^{\frac32}}
  \right|
  \leq Cs^{-\frac14}.
$$
Then 
(\ref{eq:NIST1}, \ref{eq:NIST2}) readily give 
$$
  \left|\Ai(s)\right|\leq \left|\widetilde\Ai(s)\right|+
  C\left(1+\left|\widetilde\Ai(s)\right|\right)|s|^{-3/2}
  \leq C|s|^{-1/4}
$$
and
$$
  \left|\Ai'(s)\right|\leq \sqrt{|s|}\left|\widetilde\Ai(s)\right|+
  C\left(1+\left|\widetilde\Ai(s)\right|\right)|s|^{-1}
  \leq C|s|^{1/4}.
$$
which extend to  (\ref{eq:Aisest}) and (\ref{eq:Aiprimesest}) as $\Ai(0)$ is bounded.
 
The lower bound (\ref{eq:Aiasest})
follows for $s\geq s_0$ from the previous estimates,
$$
\left|\widetilde\Ai(\alpha s)\right|\leq
\left|\Ai(\alpha s)-\widetilde\Ai(\alpha s)\right|+
\left|\Ai(\alpha s)\right|
\leq
|c_1|
\frac{\left|\widetilde\Ai(\alpha s)\right|}{s^{3/2}}
+
\left|\Ai(\alpha s)\right|=
\frac12
\frac{s_0^{3/2}}{s^{3/2}}\left|\widetilde\Ai(\alpha s)\right|
+
\left|\Ai(\alpha s)\right|,
$$
as then
$$
\left|\Ai(\alpha s)\right|
\geq \frac12 \left|\widetilde\Ai(\alpha s)\right|
  =\frac{1}{4\sqrt{\pi}}
  \left|\alpha ^{-\frac14}s^{-\frac14}e^{-\frac23 is^{\frac32}}\right|
  =\frac{1}{4\sqrt{\pi}}s^{-\frac14}.
$$
By (P2) 
we also have $|\Ai(\alpha s)|\geq c$ for $0\leq s\leq s_0$ and some $c>0$.
Moreover, the identity \cite[Eq. 9.2.11]{NIST:DLMF} 
\begin{equation}\label{eq:ABident}
 \Ai(\alpha s) = \frac{\bar\alpha}{2}[\Ai(-s)+i\Bi(-s)],
\end{equation}
and (P3) implies that  $|\Ai(\alpha s)|\geq |\Bi(-s)|/2=\Bi(-s)/2\geq \Bi(0)/2>0$ for
$s\leq 0$.
This gives the
bound (\ref{eq:Aiasest}) also for $s\leq s_0$.

For \eq{eq:Aicombest} we consider $s\geq 0$ and use (\ref{eq:NIST1}, \ref{eq:NIST2})
\begin{align*}
2\sqrt{s}|\widetilde\Ai(-s)|
&\leq 
|2\sqrt{s}\widetilde\Ai(-s)-i\Ai'(-s) - \sqrt{s}\Ai(-s)|
+  |i\Ai'(-s) + \sqrt{s}\Ai(-s)|\\
&\leq
|2\sqrt{s}\Im\widetilde\Ai(-s)-\Ai'(-s)|
+ \sqrt{s}|2\Re\widetilde\Ai(-s)- \Ai(-s)|
+  |i\Ai'(-s) + \sqrt{s}\Ai(-s)|\\
&\leq
C s^{-1}
+  |i\Ai'(-s) + \sqrt{s}\Ai(-s)|
=C s^{-1}
+  |\Ai'(-s) + i\sqrt{s}\Ai(-s)|.
\end{align*}
Hence, 
$$
|\Ai'(-s) + i\sqrt{s}\Ai(-s)| \geq 
2\sqrt{s}|\widetilde\Ai(-s)|-Cs^{-1}
=
Cs^{1/4}-Cs^{-1}.
$$
Since the zeros of $\Ai'$ and $\Ai$ do not coincide (no double roots)
we get
$$
|\Ai'(-s) + i\sqrt{s}\Ai(-s)|=|\Ai'(-s)| + \sqrt{s}|\Ai(-s)|\neq 0, \qquad s>0,
$$
and the estimate \eq{eq:Aicombest} for $s<0$ follows.
Moreover, by (P2),
when $s\geq 0$,
\begin{align*} 
|\Ai'(s) + i\sqrt{-s}\Ai(s)|&=
|\Ai'(s) - \sqrt{s}\Ai(s)|=
\sqrt{s}\Ai(s)-\Ai'(s)=
\sqrt{s}|\Ai(s)|+|\Ai'(s)|\\
&\geq \max\left(\sqrt{s}|\Ai(s)|,\
|\Ai(s)|\min_{0\leq t\leq s}
\frac{|\Ai'(t)|}{|\Ai(t)|}\right)
\geq \frac12(\sqrt{s}+C)|\Ai(s)|,
\end{align*}
which gives \eq{eq:Aicombest} for $s\geq 0$. 
Here we also used the fact that
 $\lim_{s\to+\infty}\frac{|\Ai'(s)|}{|\Ai(s)|}=
\lim_{s\to+\infty}\frac{|\sqrt{s}\widetilde\Ai(s)|}{|\widetilde\Ai(s)|}=\infty$
by \eq{eq:NIST1} and \eq{eq:NIST2}. 
\end{proof}

\begin{lem}\label{lem:Airyderiv}
For the Airy function we have
$$
   \Ai^{(m)}(x)=p_m(x) \Ai(x) + q_m(x) \Ai'(x),
$$
where $p_m$ and $q_m$ are polynomials given by the recursions
\be\label{eq:recursion}
  p_{m+1}= p_m'+xq_m, \qquad
  q_{m+1}= p_m+q_m', \qquad p_0=1,\quad q_0=0.
\ee
The degree of their sum satisfies
$deg(p_m+q_m)=\lfloor m/2 \rfloor$  and, for $|x|<1$,
$$  
  |p_m(x)|+|q_m(x)|\leq \frac{\left(d_{m+1}!\right)^2}{1-|x|},\qquad
d_m =\left\lfloor\frac{m}{2}\right\rfloor.
$$
Furthermore,
\be\label{eq:Aizero}
   \Ai^{(3p+2)}(0)=0,\qquad p=0,1,\ldots
\ee
\end{lem}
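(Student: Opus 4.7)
The plan is to handle the four assertions of the lemma in order, using induction, direct degree tracking, and the Airy ODE. For the representation $\Ai^{(m)}=p_m\Ai+q_m\Ai'$ with the recursion, I would proceed by induction on $m$: the base case $p_0=1$, $q_0=0$ is immediate, and differentiating $\Ai^{(m)}=p_m\Ai+q_m\Ai'$ while substituting $\Ai''=x\Ai$ yields
$$\Ai^{(m+1)} = (p_m'+xq_m)\Ai + (p_m+q_m')\Ai',$$
which gives the stated recursion.

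For the degrees, a refined induction on $m$ shows that $\deg(p_{2n})=n$ with $\deg(q_{2n})\leq n-2$, and $\deg(q_{2n+1})=n$ with $\deg(p_{2n+1})\leq n-1$. The inductive step uses the fact that differentiation strictly decreases degree while multiplication by $x$ raises it by one, so that the leading term of $p_{2n}=p_{2n-1}'+xq_{2n-1}$ comes entirely from $xq_{2n-1}$ (whose coefficient of $x^{n-1}$ is nonzero by IH), and a symmetric argument handles $q_{2n+1}$. Since the leading term in $p_m+q_m$ always comes from exactly one of the polynomials, $\deg(p_m+q_m)=\lfloor m/2\rfloor$.

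For the coefficient estimate, let $\|r\|_\infty$ denote the maximum modulus of the coefficients of $r$, and set $N_m:=\|p_m\|_\infty+\|q_m\|_\infty$. The recursion, together with the fact that differentiation multiplies each coefficient by at most $d_m$ and multiplication by $x$ merely shifts indices, gives
$$\|p_{m+1}\|_\infty \leq d_m\|p_m\|_\infty + \|q_m\|_\infty, \qquad \|q_{m+1}\|_\infty \leq \|p_m\|_\infty + d_m\|q_m\|_\infty,$$
hence $N_{m+1}\leq(1+d_m)N_m$. With $N_0=1$, iterating and pairing consecutive factors $(1+d_{2j})(1+d_{2j+1})=(j+1)^2$ yields $N_m\leq(d_{m+1}!)^2$. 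For $|x|<1$, summing the geometric series gives
$$|p_m(x)|+|q_m(x)| \leq N_m\sum_{i=0}^\infty|x|^i \leq \frac{(d_{m+1}!)^2}{1-|x|}.$$

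Finally, for $\Ai^{(3p+2)}(0)=0$, Leibniz applied to $\Ai''(x)=x\Ai(x)$ yields $\Ai^{(k+2)}(x)=x\Ai^{(k)}(x)+k\Ai^{(k-1)}(x)$, so at $x=0$ we have $\Ai^{(k+2)}(0)=k\Ai^{(k-1)}(0)$ for $k\geq 1$, together with $\Ai^{(2)}(0)=0$. Induction on $p$ with $k=3p$ then gives $\Ai^{(3p+2)}(0)=3p\cdot\Ai^{(3p-1)}(0)=0$. The most delicate points are the degree bookkeeping needed to justify the recursion $N_{m+1}\leq(1+d_m)N_m$ and the recognition that $\prod_{k=0}^{m-1}(1+d_k)$ is dominated by $(d_{m+1}!)^2$ (with equality for even $m$); the remaining arguments are direct computations.
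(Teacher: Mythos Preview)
Your proof is correct and follows essentially the same approach as the paper's: the same induction for the recursion, the same coefficient-norm inequality $N_{m+1}\le(1+d_m)N_m$ iterated to the product bound $(d_{m+1}!)^2$, and the same Leibniz identity $\Ai^{(k+2)}(0)=k\,\Ai^{(k-1)}(0)$ for the vanishing at zero. The only minor variation is in the degree claim: the paper observes that all coefficients of $p_m,q_m$ are nonnegative (so no cancellation in $p_m+q_m$) and shows $\deg(p_{m+1}+q_{m+1})=\deg(p_{m-1}+q_{m-1})+1$, while you track $\deg p_m$ and $\deg q_m$ separately by parity; both arguments are short and equivalent in spirit.
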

\begin{proof}
Using the form of $\Ai^{(m)}(x)$ given and using \eq{eq:Airydiff} we note that
$$
\Ai^{(m+1)} = p_m' \Ai + q_m' \Ai'(x)+
p_m\Ai' + q_m\Ai''
= p_m' \Ai + q_m' \Ai'(x)+
p_m\Ai' + xq_m \Ai,
$$
where we used the Airy differential equation $\Ai''=x\Ai$.
This gives the recursion \eq{eq:recursion}.
The statement about the degree is easily checked for $m=0,1$.
Suppose it holds upto a general $m\geq 2$. Then
\begin{align*}
   {\rm deg}(p_{m+1}+q_{m+1})&=
   {\rm deg}(p'_m+q'_m+p_m+xq_m)=
   {\rm deg}(p_m+xq_m)\\
&=    {\rm deg}(p'_{m-1}+xq_{m-1}+xp_{m-1}+xq'_{m-1})=
   {\rm deg}(x(p_{m-1}+q_{m-1}))
   \\
   &=
   {\rm deg}(p_{m-1}+q_{m-1})+1.
\end{align*}
That ${\rm deg}(p_{m}+q_{m})=d_m$ follows by induction. Since the polynomials all
have positive coefficients, it also follows that
$d_m = \max({\rm deg}(p_{m}),{\rm deg}(q_{m}))$.
For a polynomial $p$, let $|p|_\infty$ denote its largest coefficient
in magnitude. Then $|xp|_\infty=|p|_\infty$ and
$|p'|_\infty\leq {\rm deg}(p)|p|_\infty$. Consequently,
\begin{align*}
|p_{m+1}|_\infty+|q_{m+1}|_\infty
&=|p'_m+xq_m|_\infty+|p_m+q_m'|_\infty\leq
|p'_m|_\infty+|xq_m|_\infty+|p_m|_\infty+|q_m'|_\infty\\
&\leq {\rm deg}(p_m)|p_m|_\infty+|q_m|_\infty+|p_m|_\infty+{\rm deg}(q_m)|q_m|_\infty
\\
&\leq 
(\max({\rm deg}(p_{m}),{\rm deg}(q_{m}))+1)(|p_m|_\infty+|q_m|_\infty)
=
(d_m+1)(|p_m|_\infty+|q_m|_\infty).
\end{align*}
Therefore, since $|p_0|_\infty+|q_0|_\infty=1$,
$$
|p_{m}|_\infty+|q_{m}|_\infty
\leq \Pi_{\ell=0}^{m-1}(d_\ell+1)
\leq \Pi_{\ell=2}^{m+1}d_{\ell}
\leq (d_{m+1}!)^2.
$$
Finally, we have for a polynomial $p$ of degree $d$,
and $|x|<1$,
$$
   \frac{|p(x)|}{|p|_\infty}\leq
1 + |x|+\cdots +|x|^d
\leq \frac{1}{1-|x|}.
$$
The last statement of the lemma 
is known for $p=0$. Suppose it holds for $p$ and use
the Airy differential equation $\Ai''=x\Ai$.
That gives $\Ai^{(3p+2)}= x\Ai^{(3p)}+(3p-1)\Ai^{(3(p-1)+2)}$,
which shows the claim.
\end{proof}

\begin{rem}\label{polyexp}
The first few polynomials $p_m$ and $q_m$ in the theorem are given by
\begin{align*}
 \Ai^{(2)}(x) &= x \Ai(x), \\
 \Ai^{(3)}(x) &= \Ai(x)+ x\Ai'(x), \\
 \Ai^{(4)}(x) &= x^2\Ai(x)+ 2\Ai'(x), \\
 \Ai^{(5)}(x) &= 4x \Ai(x)+ x^2\Ai'(x), \\
 \Ai^{(6)}(x) &= (x^3+4) \Ai(x)+ 6x\Ai'(x), \\
 \Ai^{(7)}(x) &= 9x^2 \Ai(x)+ (x^3+10)\Ai'(x), \\
 \Ai^{(8)}(x) &= (x^4+28x) \Ai(x)+ 12x^2\Ai'(x).
\end{align*}
\end{rem}



\newpage

\appendix
\section{Proof of Lemma \ref{lem:Fregularize}}

We show this for $k=1$ so that $\Fcal_k=\Fcal$.
The case with general $k$ follows from a simple rescaling.
Let $\phi\in {\mathcal S}$ 
be a test function and
$\langle\,\cdot\,,\,\cdot\,\rangle$ 
the duality pairing between ${\mathcal S}$
and ${\mathcal S}'$.
Then since $f\psi_t\in L^\infty\subset{\mathcal S}'$ and, by dominated convergence,
\begin{align*}
\langle\Fcal(f\psi_t),\phi\rangle&=
\langle f\psi_t,\Fcal(\phi)\rangle
=\int f(x)\psi_t(x)\Fcal(\phi)(x)dx\\
&\to\int f(x)\Fcal(\phi)(x)dx
=
\langle f,\Fcal(\phi)\rangle=
\langle\Fcal(f),\phi\rangle,
\end{align*}
where we used the facts that
$\Fcal(\phi)\in{\mathcal S}\subset L^1$,
$|\psi_t|\leq 1$ for all $t$ and $f\psi_t\to f$
pointwise.
This is true for all $\phi\in {\mathcal S}$ 
and therefore $\Fcal(f\psi_t)\to \Fcal(f)$
in ${\mathcal S}'$, proving the first statement.

That $\Fcal((f\ast g)\psi_t)
\to \Fcal(f)\Fcal(g)$
follows from
the first statement since $f\ast g\in L^{\infty}$ when $g\in {\mathcal S}$
and $\Fcal(f\ast g)
=\Fcal(f)\Fcal(g)$.
The last part of the second statement is true,
since $\Fcal(g)\phi\in{\mathcal S}$
and therefore the first part gives
\begin{align*}
\langle\Fcal(f\psi_t)
\Fcal(g),\phi\rangle=
\langle\Fcal(f\psi_t),
\Fcal(g)\phi\rangle \to 
\langle\Fcal(f),
\Fcal(g)\phi\rangle=
\langle\Fcal(f)
\Fcal(g),\phi\rangle.
\end{align*}
This shows the lemma.

\section{Proof of the non-stationary phase identities}

Below is a proof of identities used in
 the non-stationary phase lemma.
 The identities show how
 the rewritten integral
depends
on the derivatives of the phase function.
In order to do that we use the spaces of functions
defined in \eq{Wdef} and \eq{Udef}.

\begin{lem}\label{nonstatphase}
Suppose $D\subset\Real$ is a bounded open set and $K\subset D$ is compact. 
Let $a\in W^{n,1}(D)$ and $\phi\in C^{n+1}(D)$. If $\phi'\neq 0$ on $K$
and $\supp\, a\subset K$, then there exist functions
$u_{\ell,n}$ such that
$$
  \int_{D} a(y) e^{i\phi(y)/\varepsilon} dy = 
  (ik)^{-n}
\sum_{\ell=0}^n 
\int_{K} a^{(\ell)}(y)
  \frac{u_{\ell,n}(y)}{{\phi'(y)}^{n}} e^{ik\phi(y)}dy,\qquad
u_{\ell,n} \in {\mathcal U}_{n-\ell}(\phi).
$$
\end{lem}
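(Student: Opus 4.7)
The plan is to proceed by induction on $n$, using the standard non-stationary phase device: on the support of $a$ the hypothesis $\phi'\neq 0$ lets us write
$$
e^{ik\phi(y)} = \frac{1}{ik\,\phi'(y)}\,\frac{d}{dy}e^{ik\phi(y)},
$$
so that integration by parts can be applied repeatedly. Since $\supp a \subset K$ is compactly contained in $D$, all distributional derivatives $a^{(\ell)}$ (which exist in $L^1$ for $\ell\leq n$ by $a\in W^{n,1}$) are supported in $K$, and all boundary contributions vanish. The base case $n=0$ is trivial: $\mathcal{U}_0(\phi)$ consists of constants, so choose $u_{0,0}\equiv 1$ and the claimed identity reduces to the tautology $\int_D a\,e^{ik\phi}\,dy = \int_K a\,e^{ik\phi}\,dy$.

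For the inductive step $n \mapsto n+1$, I apply integration by parts once more to each term of the hypothesis. A direct calculation, using the product rule on $a^{(\ell)} u_{\ell,n}/(\phi')^{n+1}$, gives
$$
(ik)^{-n}\int_K\!\frac{a^{(\ell)} u_{\ell,n}}{(\phi')^{n}}e^{ik\phi}dy = (ik)^{-(n+1)}\!\int_K\!\frac{-a^{(\ell+1)}u_{\ell,n} + a^{(\ell)}\!\bigl(-u'_{\ell,n}+(n+1)u_{\ell,n}\tfrac{\phi''}{\phi'}\bigr)}{(\phi')^{n+1}}\,e^{ik\phi}\,dy.
$$
Summing over $\ell=0,\ldots,n$ and regrouping by the order of the derivative of $a$, the coefficient of $a^{(\ell')}$ is
$$
v_{\ell',n+1} \;:=\; -\,u_{\ell'-1,n} \;-\; u'_{\ell',n} \;+\; (n+1)\,u_{\ell',n}\frac{\phi''}{\phi'}, \qquad \ell'=0,\ldots,n+1,
$$
with the convention $u_{-1,n}=u_{n+1,n}=0$.

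The main obstacle is to show $v_{\ell',n+1}\in \mathcal{U}_{(n+1)-\ell'}(\phi)$. For this I establish the key auxiliary fact that both differentiation and multiplication by $\phi''/\phi'$ raise the index of $\mathcal{U}_p$ by one. By linearity it suffices to check this on basis elements $w=\prod_k \phi^{(\alpha_k+1)}/\phi' \in \mathcal{W}_p(\phi)$. Multiplication by $\phi''/\phi'$ simply appends a new factor with exponent $\alpha=1$, incrementing $\sum\alpha_k$ by one while preserving $\alpha_k\geq 1$, so $w\cdot \phi''/\phi' \in \mathcal{W}_{p+1}$. For differentiation, the product rule combined with the identity
$$
\frac{d}{dy}\frac{\phi^{(\alpha_j+1)}}{\phi'} \;=\; \frac{\phi^{(\alpha_j+2)}}{\phi'} \;-\; \frac{\phi^{(\alpha_j+1)}}{\phi'}\cdot\frac{\phi''}{\phi'}
$$
produces, for each factor in the product, either a term that increments $\alpha_j$ to $\alpha_j+1$, or a term that multiplies by an extra factor $\phi''/\phi'$ (i.e.\ appends a new exponent $1$). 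In every case the new multi-index satisfies $\alpha_k\geq 1$ and $\sum\alpha_k=p+1$, so $w' \in \mathcal{U}_{p+1}$.

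Applying this auxiliary fact to $u_{\ell',n}\in\mathcal{U}_{n-\ell'}$ (from the inductive hypothesis) shows that each of the three terms making up $v_{\ell',n+1}$ lies in $\mathcal{U}_{(n-\ell')+1}=\mathcal{U}_{(n+1)-\ell'}$. Since $\mathcal{U}_p$ is a linear space, so does $v_{\ell',n+1}$, which closes the induction. The regularity assumptions $a\in W^{n,1}(D)$ and $\phi\in C^{n+1}(D)$ are exactly what is needed to legitimize the $n$ consecutive integrations by parts, and the compactness of $\supp a$ inside $K$ ensures no endpoint terms appear at any stage.
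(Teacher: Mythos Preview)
Your proof is correct and takes essentially the same approach as the paper: both argue by induction on $n$ via repeated integration by parts, with the crucial step being that differentiation and multiplication by $\phi''/\phi'$ each map $\mathcal{U}_p(\phi)$ into $\mathcal{U}_{p+1}(\phi)$. The paper packages the induction through the operator $L[a]:=(a/\phi')'$ and shows $L^n[a]$ has the desired form, while you work directly on the integral and track the coefficients $v_{\ell',n+1}$ explicitly, but the content is identical.
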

\begin{proof}
Define the differential operator,
$$
  L[a] := \left( \frac{a}{\phi'}\right)'.
$$
Then, since $\supp\, u\subset K\subset D$ and $|\phi'|>0$ on $K$ integration by parts gives
\begin{equation}\label{eq:inparest}
  \int_{D} a(y) e^{i\phi(y)/\varepsilon} dy
  =  \frac{1}{ik}\int_{D} L[a](y) e^{ik\phi(y)} dy.
\end{equation}
Since
 and $u$ and $\phi$ are sufficiently regular,
this can be repeated $n$ times, giving
$$
\int_{D} u(y) e^{ik\phi(y)} dy
  = 
  (ik)^{-n}\int_{D} L^n[a](y) e^{ik\phi(y)} dy.
$$
We thus need to show that there exist $u_{\ell,n}$ such that
$$
L^n[a]=
\sum_{\ell=0}^n 
a^{(\ell)}
  \frac{u_{\ell,n}}{{\phi'}^{n}},\qquad
  u_{\ell,n} \in {\mathcal U}_{n-\ell}(\phi).
$$
When $n=0$ this simply says that $u_{0,0}=1\in {\mathcal U}_{0}(\phi)$. 
Suppose the claim holds for $n$ and consider
\begin{align*}
L^{n+1}[a]&=
L
\sum_{\ell=0}^n 
a^{(\ell)}
  \frac{u_{\ell,n}}{{\phi'}^{n}}
=
\sum_{\ell=0}^n 
\frac{d}{dy}\left(a^{(\ell)}
  \frac{u_{\ell,n}}{{\phi'}^{n+1}}\right)
\\
&=
\sum_{\ell=0}^n 
a^{(\ell+1)}
  \frac{u_{\ell,n}}{{\phi'}^{n+1}}
  +
a^{(\ell)}
  \frac{u'_{\ell,n}}{{\phi'}^{n+1}}
  -(n+1)
a^{(\ell)}
  \frac{u_{\ell,n}\frac{\phi''}{\phi'}}{{\phi'}^{n+1}}.
\end{align*}
For the first term we have
$$
u_{\ell,n}\in {\mathcal U}_{n-\ell}(\phi)= {\mathcal U}_{n+1-(\ell+1)}(\phi).
$$
For the third term
$$
u_{\ell,n}\frac{\phi''}{\phi'}\in {\mathcal U}_{n+1-\ell}(\phi).
$$
For the second term, consider one basis function $w\in{\mathcal W}_p(\phi)$,
$$
   w =\prod_{k=0}^M\frac{\phi^{(\alpha_k+1)}}{\phi'},\qquad
   \sum_{k=0}^M\alpha_k=p.
$$
for some $M$. Then
$$
w'
=\sum_{\ell=0}^M\prod_{k=0}^M
\frac{\phi^{(\alpha_k+1+\delta_{\ell,k})}}{\phi'}
-\sum_{\ell=0}^M\frac{\phi''}{\phi'}\prod_{k=0}^M\frac{\phi^{(\alpha_k+1)}}{\phi'}
\in {\mathcal U}_{p+1}(\phi).
$$
Hence, $u'_{\ell,n}\in {\mathcal U}_{n+1-\ell}(\phi)$,
as it is a linear combination of derivatives of functions
in ${\mathcal U}_{n-\ell}(\phi)$.
This shows that $L^{n+1}[a]$ is of the correct form and the lemma
is proved.
\end{proof}

\section{Boundedness of $Z$}

Here we consider the scaled remainder term in the Taylor expansion
of $\exp(iz)$,
\begin{equation}\label{WdefA}
 Z(z) = \frac{1}{z^3}\left(
  e^{iz} - (1+iz +(iz)^2/2)
 \right) =
 \frac{1}{2i}\int_0^1 e^{isz}(1-s)^2ds.
\end{equation}
We have the following lemma.
\begin{lem}\label{lem:Westimate}
Let
$$
  \sigma^\varepsilon(\theta)=Z
 \left(\frac{\varepsilon }{2}m_{11}(\xi_0+\varepsilon\theta)\theta^4\right).
$$
Then there is a constant $C$ such that
$$
   ||\sigma^\varepsilon||_{W^{3,\infty}(\Real)}\leq C,
$$
for all $0<\varepsilon\leq 1$.
\end{lem}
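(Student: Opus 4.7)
The plan is to apply Fa\`a di Bruno's formula to $\sigma^\varepsilon = Z\circ g$, where
$$
g(\theta):=\frac{\varepsilon}{2}\,m_{11}(\xi_0+\varepsilon\theta)\,\theta^4.
$$
The key observation is that $\Im g(\theta)\geq 0$ for all $\theta$, which follows from \eqref{m11a} of Lemma~\ref{m11est}, namely $\Im m_{11}(\xi_0+\varepsilon\theta) = \eta_0^2/|q(\xi_0+\varepsilon\theta)|^2 \geq 0$. Using the integral representation of $Z$ in \eqref{WdefA} (a third-order Taylor remainder of $e^{iw}$), differentiation under the integral sign together with $|e^{isw}| = e^{-s\Im w}\leq 1$ for $\Im w\geq 0$ yields
$$
|Z^{(K)}(w)|\leq \frac{C_K}{(1+\Im w)^{K+1}},\qquad \Im w\geq 0.
$$
Complementarily, expanding the rational-plus-exponential form $(iw)^3 Z(w)=e^{iw}-1-iw-(iw)^2/2$ provides a polynomial-type decay in $|w|$: for $\Im w\geq 0$,
$$
|Z^{(K)}(w)|\leq \frac{C_K}{(1+|w|)^{\min(K+1,3)}}.
$$

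Applying Fa\`a di Bruno's formula to $\sigma^\varepsilon = Z\circ g$ gives, for $n\leq 3$,
$$
|(\sigma^\varepsilon)^{(n)}(\theta)| \leq C_n\sum_{\mathbf{k}}|Z^{(K)}(g(\theta))|\prod_{j=1}^n|g^{(j)}(\theta)|^{k_j},
$$
with the sum over partitions $\sum_j j k_j = n$, $K = \sum_j k_j$. The derivatives $g^{(j)}(\theta)$ are estimated via Leibniz's rule and Lemma~\ref{m11est}\eqref{m11c}: one obtains $|g^{(j)}(\theta)|\leq C\varepsilon|\theta|^{4-j}$ when $|\varepsilon\theta|\leq 1$, and $|g^{(j)}(\theta)|\leq C|\theta|^{3-j}$ when $|\varepsilon\theta|\geq 1$. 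Similarly, Lemma~\ref{qest} gives the lower bound $\Im g(\theta)\geq c\,\varepsilon\theta^4/(1+(\varepsilon\theta)^2)^2$.

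The proof is concluded by a case analysis in $\theta$. For $|\theta|\leq 1$ each $|g^{(j)}(\theta)|\lesssim \varepsilon$, so the desired bound is immediate. For $1\leq |\theta|\leq 1/\varepsilon$ the $(1+\Im g)^{-K-1}$ bound on $Z^{(K)}$ is used: introducing $T = \varepsilon\theta^4$, the ratio $\prod|g^{(j)}|^{k_j}/(1+\Im g)^{K+1}$ reduces to an expression of the form $C|\theta|^{-n}\,T^K/(1+T)^{K+1}\leq C|\theta|^{-n}\leq C$, since $\sup_{T\geq 0} T^K/(1+T)^{K+1}\leq 1$. For $|\theta|\geq 1/\varepsilon$, the lower bound $|m_{11}(\xi_0+\varepsilon\theta)|\geq D_0/(1+|\varepsilon\theta|)$ from Lemma~\ref{m11est}\eqref{m11d} yields $|g(\theta)|\geq c|\theta|^3$, and the polynomial-type bound on $Z^{(K)}$ combined with $|g^{(j)}|\lesssim|\theta|^{3-j}$ gives $|(\sigma^\varepsilon)^{(n)}(\theta)|\lesssim |\theta|^{-n}$. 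The main obstacle is that the factors $|g^{(j)}(\theta)|$ individually grow polynomially in $\theta$, so neither decay bound for $Z^{(K)}$ suffices in isolation; the argument requires matching the $\Im w$-decay and $|w|$-decay estimates across the transition region $|\theta|\sim 1/\varepsilon$.
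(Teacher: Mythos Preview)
Your proof is correct and shares the same skeleton as the paper's: apply Fa\`a di Bruno to $Z\circ g$, bound the derivatives of $Z$ via both the integral and the rational representations in \eqref{WdefA}, and bound the derivatives of $g$ via Lemma~\ref{m11est}. The difference is in how the final estimate is organized. The paper avoids your three-region split entirely: it uses only the bound $|Z^{(p)}(z)|\leq C/(1+|z|^p)$ (no $\Im w$ version), combines it with the \emph{global} lower bound $|g(\theta)|\geq c\,\varepsilon\theta^4/(1+|\varepsilon\theta|)$ coming from \eqref{m11d}, and reduces everything to the single scaling variable $\varepsilon^{1/4}|\theta|$, obtaining in one stroke that each Fa\`a di Bruno term of total order $n$ is $O(\varepsilon^{n/4})$ uniformly in $\theta$. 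So your closing remark that ``neither decay bound for $Z^{(K)}$ suffices in isolation'' is not accurate: the $|w|$-decay alone already does the job, and the $\Im w$-bound you introduce is extra machinery. Your case analysis is valid but more laborious; the paper's scaling trick buys a shorter, uniform argument and even yields the slightly stronger conclusion $\|(\sigma^\varepsilon)^{(n)}\|_{L^\infty}\leq C\varepsilon^{n/4}$.
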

\begin{proof}
We begin by estimating $Z$ and its first three derivatives
for $z$ with non-negative imaginary part.
Then all derivatives of $Z$ are bounded, since
$$
 \left|\frac{d^pZ(z)}{dz^p}\right| = \frac{1}2\left|\int_0^1 e^{isz}(1-s)^2(is)^pds\right|
 \leq 
 \frac{1}2\int_0^1 e^{-s\Im z}(1-s)^2 s^pds <\frac12.
$$
Furthermore, from the first part of the definition (\ref{WdefA}) 
we get, for $p=0,\ldots,3$ and $|z|\geq 1$,
$$
 \left|\frac{d^pZ(z)}{dz^p}\right|
 \leq C\left(\sum_{j=0}^p |z|^{-3-p+j} + |z|^{-3-p}+ |z|^{-2-p}+ |z|^{-1-p}
 \right)
 \leq \frac{C'}{|z|^{\min(3,p+1)}}.
$$
It follows that there is a constant $C$ such that
\begin{equation}\label{Westimate}
 \left|\frac{d^pZ(z)}{dt^p}\right|\leq \frac{C}{1+|z|^p},\qquad p=0,\ldots, 3,
 \qquad \Im z \geq 0.
\end{equation}
Next, let
$$
  G^\varepsilon(\theta) =\frac{\varepsilon }{2}m_{11}(\xi_0+\varepsilon\theta)\theta^4,
$$
so that $\sigma^\varepsilon(\theta) =Z(G^\varepsilon(\theta))$.
Then, by Lemma~\ref{m11est} for $0\leq p\leq 4$,
$$
  \left|\frac{d^pG^\varepsilon(\theta)}{d\theta^p}\right| 
  \leq C\varepsilon\sum_{j=0}^p \varepsilon^{j} |m_{11}^{(j)}(\xi_0+\varepsilon\theta)|
  |\theta|^{4-p+j}
  \leq 
  C\varepsilon |\theta|^{4-p} \sum_{j=0}^p 
  \frac{d_j|\varepsilon\theta|^{j}}
{1+|\varepsilon\theta|^{j+1}}  
  \leq C'\frac{\varepsilon|\theta|^{4-p}}
{1+|\varepsilon\theta|},
$$
and since $\Im m_{11}>0$ by Lemma~\ref{m11est}, we
get from \eq{Westimate} 
and
\eq{m11d} that
$$
  |Z^{(p)}(G^\varepsilon(\theta))|
  \leq \frac{C}{1+\left|G^\varepsilon(\theta)\right|^p}
  \leq \frac{C}{
  1+\left|\frac{D_0\varepsilon\theta^{4}}{2(1+|\varepsilon\theta|)}\right|^p}
  \leq C''\frac{(1+|\varepsilon\theta|)^p}{
  1+\varepsilon^p|\theta|^{4p}}, \qquad p=0,\ldots, 3.
$$
Therefore, 
if $p=0,\ldots,3$ and $j_1+\cdots+j_p=p'\leq 4p$,
\begin{align*}
\left|Z^{(p)}(G^\varepsilon(\theta))
\frac{d^{j_1}G^\varepsilon(\theta)}{d\theta^{j_1}}
\cdots \frac{d^{j_p}G^\varepsilon(\theta)}{d\theta^{j_p}}
\right|
&\leq 
\frac{C''(1+|\varepsilon\theta|)^p}
{
  1+\varepsilon^p|\theta|^{4p}}\frac{(C'\varepsilon)^{j_1+\cdots +j_p}|\theta|^{4-j_1+4-j_2+\cdots+4-j_p}}
{(1+|\varepsilon\theta|)^{j_1+\cdots +j_p}}
\\
&= C'''
\frac{\varepsilon^{p}
|\theta|^{4p-p'}
}
{1+
\varepsilon^p\left|
\theta
\right|^{4p}
}
= C'''
\frac{\varepsilon^{p'/4}
(\varepsilon^{1/4}|\theta|)^{4p-p'}
}
{1+
(\varepsilon^{1/4}|\theta|)^{4p}
}\leq C'''\varepsilon^{p'/4}.
\end{align*}
From these estimates we get, with $p=p'=0,\ldots, 3$,
\begin{align*}
\left| \sigma^\varepsilon(\theta)
\right|
&= |Z(G^\varepsilon(\theta))|\leq
C,
   \\
\left| \frac{d}{d\theta}\sigma^\varepsilon(\theta)
\right|
&= 
\left|Z^{(1)}(G^\varepsilon(\theta))
\frac{dG^\varepsilon(\theta)}{d\theta}
\right|
\leq
C\varepsilon^{1/4},
   \\
\left| \frac{d^2}{d\theta^2}\sigma^\varepsilon(\theta)
\right|
&=
\left|Z^{(2)}(G^\varepsilon(\theta))
\left(\frac{dG^\varepsilon(\theta)}{d\theta}\right)^2
+Z^{(1)}(G^\varepsilon(\theta))
\frac{d^{2}G^\varepsilon(\theta)}{d\theta^{2}}
\right|
\leq C\varepsilon^{1/2},
\\
\left| \frac{d^3}{d\theta^3}\sigma^\varepsilon(\theta)
\right|
&=
\left|Z^{(3)}(G^\varepsilon(\theta))
\left(\frac{dG^\varepsilon(\theta)}{d\theta}\right)^3
+3Z^{(2)}(G^\varepsilon(\theta))
\frac{d^{2}G^\varepsilon(\theta)}{d\theta^{2}}\frac{dG^\varepsilon(\theta)}{d\theta}
+3Z^{(1)}(G^\varepsilon(\theta))
\frac{d^{3}G^\varepsilon(\theta)}{d\theta^{3}}
\right|
\leq C\varepsilon^{3/4}.
\end{align*}
This shows the lemma.
\end{proof}
\end{document}